\newtheorem{theorem}{Theorem}
\newtheorem{corollary}{Corollary}
\newtheorem{lemma}{Lemma}
\newtheorem{proposition}{Proposition}
\newtheorem{assumption}{Assumption}
\theoremstyle{definition}
\newtheorem{remark}{Remark}
\title{Central limit theorem for the Sliced 1-Wasserstein distance and the max-Sliced 1-Wasserstein distance
$^*$}
\author{Xianliang Xu  \and Zhongyi Huang$^\dag$}
\address{Department of Mathematics, Tsinghua University, Beijing 100084, China.}
\email{xuxl19@mails.tsinghua.edu.cn; zhongyih@mail.tsinghua.edu.cn}
\thanks{$^\dag$ Corresponding author.}
\thanks{$^*$ This work was partially supported by the NSFC Projects No. 12025104, 11871298, 81930119.}
\begin{document}
\maketitle

\begin{abstract}
The Wasserstein distance has been an attractive tool in many fields. But due to its high computational complexity and the phenomenon of the curse of dimensionality in empirical estimation, various extensions of the Wasserstein distance have been proposed to overcome the shortcomings such as the Sliced Wasserstein distance. It enjoys a low computational cost and dimension-free sample complexity, but there are few distributional limit results of it. In this paper, we focus on Sliced 1-Wasserstein distance and its variant max-Sliced 1-Wasserstein distance. We utilize the central limit theorem in Banach space to derive the limit distribution for the Sliced 1-Wasserstein distance. Through viewing the empirical max-Sliced 1-Wasserstein distance as a supremum of an empirical process indexed by some function class, we prove that the function class is $P$-Donsker under mild moment assumption. Moreover, for computing Sliced $p$-Wasserstein distance based on Monte Carlo method, we explore that how many random projections that can make sure the error small in high probability. We also provide upper bound of the expected max-Sliced 1-Wasserstein between the true and the empirical probability measures under different conditions and the concentration inequalities for max-Sliced 1-Wasserstein distance are also presented. As applications of the theory, we utilize them for two-sample testing problems.  
\end{abstract}

\section{Introduction}
In many fields, it is fundamental to choose a proper distance to measure the discrepancy between two probability measures. Comparing to Kullback-Leibler(KL) divergence, $\chi^2$ statistic, the Wasserstein distance enjoys better geometric properties by considering the underlying geometry of the space. It has been employed successfully in statistics\cite{32,33,34,35}, machine learning\cite{36,37,38,39}, computer graphics\cite{40,41}. However, there are also many shortcomings that limit its practical applications. In the aspect of computation, the computational cost is of order $\mathcal{O}(n^3\log n)$ for discrete probability measure supported in points of size $n$. It becomes an obstacle of applying the Wasserstein distance in many machine learning tasks. Moreover, when true distributions are estimated from samples, the Wasserstein distance suffers from the curse of dimensionality which means that the convergence rate decays rapidly with dimension. Motivated by the fact that the Wasserstein distance has a closed form in one dimension, the Sliced Wasserstein distance has been proposed as an alternative to the original Wasserstein distance via averaging the Wasserstein distance between the random one-dimensional projections which follow the uniform distribution on the sphere\cite{42,43}. And its variant max-Sliced Wasserstein distance which is the maximum of the Wasserstein distance between the one-dimensional projections has also gained much attention.

Recently, the Sliced Wasserstein distance and the max-Sliced Wasserstein distance have been successfully applied in many tasks \cite{44,45,46,47,48}. Their sample complexity doesn't depend on the dimensionality\cite{49} and they have some similar properties as the Wasserstein distance. Several recent studies have explored the limit distribution for the Wasserstein distance, but there are few results for the Sliced Wasserstein distance and max-Sliced Wasserstein distance. In this study, we derive distribution limits for them. 

\subsection{Contributions}
Our contributions can be summarized as follows:
\begin{itemize}
\item We derive the central limit theorem result for the Sliced 1-Wasserstein distance based on the theory of probability in Banach space and the results of convergence rate in empirical estimation. We also explore the number of random projections that makes sure that Monte Carlo method has a small error in high probability in computing Sliced $p$-Wasserstein distance. 
\item We prove that the max-Sliced 1-Wasserstein distance between the true and the empirical distributions is a supremum of an empirical process indexed by some function class, which is $P$-Donsker when the ($4+\delta$)-th $(\delta > 0)$ moment of the true distribution is finite. And the limit distribution of max-Sliced 1-Wasserstein distance follows from the continuous mapping theorem.
\item We provide upper bound of expected max-Sliced 1-Wasserstein in empirical estimation and verify that it doesn't depend on the dimensionality. And the concentration inequalities of max-Sliced 1-Wasserstein distance are also presented.
\item We apply the Sliced 1-Wasserstein distance and the max-Sliced 1-Wasserstein in the problem of two-sample testing.
\end{itemize} 

\subsection{Related works}
In one dimension, \cite{1} derived the limit distribution of the empirical Wasserstein distance based on the central limit theorem in Banach space, more specifically in $L^1(\mathbb{R})$. \cite{17} explored the necessary and sufficient conditions for $Lip_1(\mathbb{R})$ to be $P$-Donsker, thus the limit distribution of empirical 1-Wasserstein distance follows from Kantorovich-Rubinstein(KR) duality and continuous mapping theorem. \cite{18} provided asymptotic distribution for a weighted version of empirical 2-Wasserstein distance. \cite{62} and \cite{60} also consider the case in one dimension under the $i.i.d.$ condition or non-$i.i.d.$ condition.

In general dimensions, combining Efron-Stein variance inequality and the stability results of optimal transportation potentials, \cite{4} showed that $
\sqrt{n}(W_2^2(P_n, Q)-\mathbb{E}W_2^2(P_n, Q)) $ converges to a zero-mean Gaussian distribution under that $P\neq Q$ and other moment assumptions. And \cite{19} sharpened their results in one dimension. Inspired by \cite{4}, \cite{57} obtained a central limit theorem of the form $\sqrt{n}(W_2^2(P,Q_n)-W_2^2(P,Q)) \xrightarrow{d} N(0, \sigma^2) $ under some regularity assumptions. When $P$ and $Q$ are supported in finite points, \cite{2} applied directional Hadamard differentiability theory to obtain limit distribution by dealing the Wasserstein distance between $P$ and $Q$ as a functional in finite dimensional space. And for $P$ and $Q$ supported in a countable set, \cite{3} derived limit distribution for empirical Wasserstein distance based on the sensitivity of optimal values of infinite dimensional mathematics programs and a delta method.

 For extensions of original Wasserstein distance like Smooth Wasserstein distance \cite{50} and entropic regularization of OT(EOT) \cite{51}, a central limit theorem for EOT was provided in \cite{20} for probability measures with finite support based on same technique in \cite{2} (see also \cite{21,22}). By bounding the entropy of the function class of Gaussian-smoothed Lipschitz functions and Theorem 1.1 in \cite{23}, \cite{5} showed that the function class is $P$-Donsker under some conditions. \cite{24} explored the limit distribution of projection-based Wasserstein distance based on the same argument of \cite{2} for distributions with finite support. And \cite{61} derived limit law of Sliced $p$-Wasserstein distance under that $P\neq Q$ for all $p>1$ by Hadamard differentiability theory, but it requires some regularity assumptions such as the absolutely continuity of all one-dimension projections. The most similar to our work is the concurrent work \cite{61}. For Sliced 1-Wasserstein distance, we obtain the necessary and sufficient conditions under which the central limit theorem holds and they present only the sufficient condition. And by the method in \cite{60} for 1-Wasserstein distance in one dimension, we derive the limit distribution for Sliced 1-Wasserstein distance under weaker conditions than \cite{61}. For max-Sliced 1-Wasserstein distance, our construction of the bracketing cover of the function class is different from their, our method is based on the bracketing number of 1-Lipschitz continuous functions vanishing at zero and their method is based on the covering number. The differences will be discussed in details in the paper.

\subsection{Notations}
$ \|\cdot \| $ denotes the Euclidean norm. The class of Borel probability measure on $\mathbb{R}^d$ is denoted by $\mathcal{P}(\mathbb{R}^d)$ and the  set of probability measure with finite moment of order $p$ is denoted by $\mathcal{P}_p(\mathbb{R}^d)$. Given a map $T:\mathbb{R}^d \rightarrow \mathbb{R}$ and a probability of $\mathcal{P}(\mathbb{R}^d)$, we write $T_{\#}P$ for the pushforward of $P$ under $T$, which means that for any Borel set $A \subset \mathbb{R}^d$ we have $T_{\#}P(A)=P(T^{-1}(A))$. For $A \in \mathbb{R}^{m \times n}$, define real-valued function $A^{*}: \mathbb{R}^m \rightarrow \mathbb{R}^n$ with $A^{*}(x) = A^T x $ for all $x \in \mathbb{R}^m$. Given $\mu \in \mathcal{P}(\mathbb{R}^d)$, define $M_p(\mu):= (\int_{\mathbb{R}^d} \|x\|^p d\mu(x) )^{\frac{1}{p}}$. We write $m$ for the Lebesgue measure in $\mathbb{R}$.

For non-empty set $\mathcal{T}$, let $l^{\infty}(\mathcal{T})$ denote the set of all bounded function $f: \mathcal{T} \rightarrow \mathbb{R}$ with respect to the uniform norm $\|f\|_{\mathcal{T}}:= \sup_{t \in \mathcal{T}}|f(t)|$. For the class of Lipschitz continuous real-valued function on $\mathbb{R}^d$ with Lipschitz constant at most 1, we denote it by $Lip_1(\mathbb{R}^d)$. And write $Lip_{1,0}(\mathbb{R}^d)$ for the functions that belong to $Lip_1(\mathbb{R}^d)$ and vanish at zero. We write $\mathcal{N}(\epsilon, \mathcal{F}, d)$ for the $\epsilon$-number of function class $\mathcal{F}$ w.r.t the metric $d$ and  $\mathcal{N}_{[\ ]}(\epsilon, \mathcal{F}, d)$ for the $\epsilon$-bracket number of $\mathcal{F}$. For brevity, we write $SW_p$, $MSW_p$ for the Sliced $p$-Wasserstein distance and max-Sliced $p$-Wasserstein distance respectively.

\section{Background}
\subsection{The Wasserstein Distance} 
The Wasserstein distance is related to the theory of optimal transport. Given two probability $P, Q \in \mathcal{P}(\mathbb{R}^d)$, the $p$-Wasserstein distance between them is defined as :
\[W_p(P,Q) = \left( \inf_{\pi \in \prod(P,Q)} \int_{\mathbb{R}^d\times \mathbb{R}^d} \|x-y\|^p d \pi(x,y)\right)^{\frac{1}{p}},\]
where $\prod(P,Q)$ is the set of so-called transport plans whose marginals coincide with $P$ and $Q$ respectively. When $d=1$, the Wasserstein distance has a closed form, that is
\[W_p(P,Q) = \left( \int_{0}^{1}|F^{-1}(x)-G^{-1}(x)|^p dx \right)^{\frac{1}{p}},\]
where $F$ and $G$ denote the cumulative distribution functions(CDF) of $P$ and $Q$ respectively and their pseudo-inverses are denoted by $F^{-1}$ and $G^{-1}$. Particularly, for $p=1$ and $d=1$, $W_1$ has the form as
\[W_1(P,Q)= \int_{-\infty}^{+\infty} |F(x)-G(x)|dx,\]
which can be regarded as the $L^1$ distance between the CDF of $P$ and $Q$. Besides, when $p=1$, KR duality shows that $W_1$ can be also represented as
\[W_1(P,Q) = \sup_{f \in Lip_1(\mathbb{R}^d)} \int_{\mathbb{R}^d} f d(P-Q).\]
It's obvious that $Lip_1(\mathbb{R}^d)$ can be replaced by $Lip_{1,0}(\mathbb{R}^d)$, since $P$ and $Q$ have the same total mass. Moreover, empirical $W_1$ can be seen as a supremum of an empirical process indexed by $Lip_{1,0}(\mathbb{R}^d)$. In one dimension, \cite{17} presented the necessary and sufficient conditions for  $Lip_{1,0}(\mathbb{R})$ to be $P$-Donsker. 

\subsection{Sliced 1-Wasserstein Distance and max-Sliced 1-Wasserstein Distance}
Trough slicing method, the Sliced $p$-Wasserstein distance is defined as
\[SW_p(P, Q) = \left( \int_{S^{d-1}} W_p^p(\theta_{\#}^{*}P, \theta_{\#}^{*}Q)d\sigma(\theta) \right)^{\frac{1}{p}}.\]
where $\sigma$ is the uniform distribution on $S^{d-1}$.

And the max-Sliced $p$-Wasserstein distance is defined as
\[MSW_p(P, Q) = \max_{\theta \in S^{d-1}} W_p(\theta_{\#}^{*}P, \theta_{\#}^{*}Q).\]

$SW_p$ has some similar properties as the Wasserstein distance and is equivalent to the Wasserstein distance when distributions are with compact support \cite{42}.

\section{Central Limit Theorem for the Sliced 1-Wasserstein Distance}
In this section, we explore the limit distribution of Sliced 1-Wasserstein distance between the empirical and the true probability measure. We derive similar results as Theorem 2.1 in \cite{1} which provided central limit theorem for the Wasserstein distance in one dimension. Based on the central limit theorem in Banach space, more specifically in $L^1(S^{d-1}\times \mathbb{R}, \sigma \times m)$, for brevity, we denote it by $L^1(S^{d-1}\times \mathbb{R})$. Notice that for $P,Q \in \mathcal{P}(\mathbb{R})$, $W_1(P,Q)$ is the $L^1$ distance between the CDF of $P$ and $Q$, thus for $\mu \in \mathcal{P}(\mathbb{R}^d)$, $SW_1(\mu_n,\mu)$ has the form as follows.
\[SW_1(\mu_n, \mu) = \int_{S^{d-1}} \int_{-\infty}^{+\infty}|F_n(\theta, t)-F(\theta, t)|dtd\sigma(\theta) ,\]
where $F_n(\theta, t) = \frac{1}{n} \sum\limits_{i=1}^n I_{\theta^T X_i\leq t}$, $F(\theta, t) = P(\theta^T X\leq t)$ and $X,X_1,\cdots, X_n$ are $i.i.d.$ random variables with values in $\mathbb{R}^d$ and common distribution $\mu$. In this Section, we define
\[ \mathcal{P}_{2,1}(\mathbb{R}^d)=\{\mu\in \mathcal{P}(\mathbb{R}^d) : \  \int_{0}^{\infty} \sqrt{ \mu(\|x\|>t) }dt<\infty \}   .\] 

\begin{theorem}
Let $X, X_i$,  $i \in \mathbb{N}$ be i.i.d. random variables with values in $\mathbb{R}^d$ and $X \sim  \mu$. Let
\[Y(\theta, t) := P(\theta^T X>t)-I_{\theta^T X>t}.\]
And $Y_i$'s definition is the same as $Y$, just replace the $X$ by $X_i$, then

(1) The processes $\sum\limits_{i=1}^n Y_i/\sqrt{n}= \sqrt{n}(F_n(\theta, t)- F(\theta, t))$ converge to $B_{\mu}$ in law in $L^1(S^{d-1}\times \mathbb{R})$ if and only if $\Lambda_{2,1}(X) < \infty$, where $B_{\mu}(\theta, t)$ is a centered Gaussian process with covariance function $\mathbb{E}[B_{\mu}(\theta_1, x_1)B_{\mu}(\theta_2, x_2)]=Cov(Y(\theta_1, x_1), Y(\theta_2, x_2))$ and $\Lambda_{2,1}(X) := \int_{0}^{+\infty} \sqrt{P(\|X\|>t)}dt.$

(2) The sequence 
\[\| \sum\limits_{i=1}^n \frac{Y_i}{\sqrt{n}} \|_{L^1(S^{d-1}\times \mathbb{R})} = \sqrt{n} \int_{S^{d-1}} \int_{-\infty}^{+\infty} |F_n(\theta, t)-F(\theta, t)|dtd\sigma(\theta) , n \in \mathbb{N},\]
is stochastically bounded if and only if $\Lambda_{2,1}(X) < \infty$.
\end{theorem}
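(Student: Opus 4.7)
The plan is to realize $(Y_i)_{i\ge 1}$ as an i.i.d.\ sequence of centered random elements in the separable Banach space $\mathbb{B}:=L^1(S^{d-1}\times\mathbb{R},\sigma\times m)$ and invoke the central limit theorem in $\mathbb{B}$, in exact parallel with the one-dimensional $L^1(\mathbb{R})$ argument that Theorem~2.1 in \cite{1} uses. First I would verify that $Y(\theta,t)=P(\theta^T X>t)-I_{\theta^T X>t}$ is Bochner-measurable and centred in $\mathbb{B}$. Then the classical $L^1$-CLT characterisation applies: an i.i.d.\ centred sequence in $L^1(T,\nu)$ satisfies the CLT with the canonical Gaussian limit, and equivalently has stochastically bounded normalised partial sums, if and only if
\[
\int_T \sqrt{\mathbb{E}\,Y(\tau)^2}\,d\nu(\tau)<\infty.
\]
Since $\mathbb{E}\,Y(\theta,t)^2=F(\theta,t)(1-F(\theta,t))$, this simultaneously handles parts~(1) and~(2) and reduces the theorem to the single analytic equivalence
\[
\int_{S^{d-1}}\!\int_{\mathbb{R}} \sqrt{F(\theta,t)(1-F(\theta,t))}\,dt\,d\sigma(\theta)<\infty
\;\;\Longleftrightarrow\;\; \Lambda_{2,1}(X)<\infty;
\]
the Gaussian covariance claimed for $B_\mu$ drops out of the centred increments of the projected empirical process.

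For the sufficient direction ($\Lambda_{2,1}(X)<\infty$ implies the double integral is finite), I would use $\sqrt{F(1-F)}\le\min(\sqrt F,\sqrt{1-F})$ combined with the deterministic inequality $|\theta^T X|\le\|X\|$ valid for every $\theta\in S^{d-1}$. This gives $\sqrt{1-F(\theta,t)}=\sqrt{P(\theta^T X>t)}\le\sqrt{P(\|X\|>t)}$ for $t\ge 0$, and symmetrically $\sqrt{F(\theta,t)}\le\sqrt{P(\|X\|>|t|)}$ for $t<0$. Splitting the $t$-integral at zero and using that $\sigma$ is a probability measure then bounds the double integral by $2\Lambda_{2,1}(X)$.

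The necessary direction is the main obstacle. The natural starting point is a spherical-cap estimate: there is a purely dimensional constant $c_d>0$ with $\sigma(\{\theta\in S^{d-1}:|\theta^T x|>t\})\ge c_d\,I_{\|x\|>2t}$ for every $x\in\mathbb{R}^d$, whence, after taking $X$-expectation,
\[
\int_{S^{d-1}} P(|\theta^T X|>t)\,d\sigma(\theta)\ge c_d\,P(\|X\|>2t).
\]
Converting this into a lower bound on $\int_{S^{d-1}}\sqrt{P(|\theta^T X|>t)}\,d\sigma(\theta)$ is the delicate step, since Jensen points the wrong way. I would combine the elementary Cauchy--Schwarz-type inequality $\int P\,d\sigma\le\sqrt{\sup_\theta P(|\theta^T X|>t)}\cdot\int\sqrt{P}\,d\sigma$ with the uniform bound $\sup_\theta P(|\theta^T X|>t)\le P(\|X\|>t)$, and then run a median-centring step that replaces $\sqrt{F(1-F)}(\theta,t)$ by $\tfrac{1}{\sqrt 2}\sqrt{P(|\theta^T X|>t)}$ once $|t|$ exceeds a uniform bound on the medians $m_\theta$ (such a bound being extractable via Fatou from the CLT hypothesis). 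Integrating in $t$ and swapping with Fubini would then reconstruct $\Lambda_{2,1}(X)$. The comparison of the tails $P(\|X\|>2t)$ and $P(\|X\|>t)$ that arises in the resulting ratio is where the most careful bookkeeping is hidden, and it may be cleanest to first symmetrise via $\tilde X:=X-X'$ to force all medians to vanish before running the lower bound.
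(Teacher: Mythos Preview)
Your high-level framework---realising the $Y_i$ as i.i.d.\ centred elements of $L^1(S^{d-1}\times\mathbb{R})$ and invoking the Banach-space CLT---matches the paper exactly, as does your sufficient direction. Two things deserve comment. First, the $L^1$-CLT the paper actually cites (Ledoux--Talagrand, Theorem~10.10) is a \emph{two}-condition criterion: pregaussianity, i.e.\ $\int\sqrt{\mathbb{E}Y^2}\,d(\sigma\times m)<\infty$, \emph{and} $t^2P(\|Y\|_{L^1}>t)\to 0$. The paper verifies the tail condition separately via the explicit bound $\|Y\|_{L^1}\le 4\|X\|+\mathbb{E}\|X\|$, which gives it once $\mathbb{E}\|X\|^2<\infty$. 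Your claim that the integral condition alone characterises both CLT and stochastic boundedness is not the route taken: for part~(ii), necessity is proved by a hands-on adaptation of the one-dimensional argument---Hoffmann--J{\o}rgensen and Montgomery--Smith give $\sup_n\mathbb{E}\|S_n/\sqrt n\|_{L^1}<\infty$, then a pointwise binomial lower bound $\mathbb{E}|\xi-\mathbb{E}\xi|\ge C\sqrt{np}$ for $\xi\sim\mathrm{Bin}(n,p)$ reconstructs $\int\sqrt{P(|\theta^TX|>t)}\,dt$ from that uniform expectation bound.

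More importantly, your necessary direction for the analytic equivalence is much harder than it needs to be. You flag the comparison of $\int P(\|X\|>2t)/\sqrt{P(\|X\|>t)}\,dt$ with $\Lambda_{2,1}(X)$ as the ``delicate step'' and propose symmetrisation as a workaround, leaving it unresolved. The paper bypasses this entirely with a short compactness argument: if $\int_{S^{d-1}}\int_0^\infty\sqrt{P(|\theta^TX|>t)}\,dt\,d\sigma(\theta)<\infty$, then by Fubini the inner integral is finite for $\sigma$-a.e.\ $\theta$; since such $\theta$ are dense in $S^{d-1}$, one can pick finitely many of them, $\theta_1,\ldots,\theta_n$, with $\|x\|/2\le\max_i|\theta_i^Tx|\le\sum_i|\theta_i^Tx|$ for all $x$. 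Then
\[
\sqrt{P(\|X\|>t)}\le\sqrt{\sum_{i=1}^n P\bigl(|\theta_i^TX|>t/(2n)\bigr)}\le\sum_{i=1}^n\sqrt{P\bigl(|\theta_i^TX|>t/(2n)\bigr)},
\]
and integrating in $t$ gives $\Lambda_{2,1}(X)<\infty$ directly. No spherical caps, no tail ratios, no symmetrisation.
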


By noticing the similarity between $W_1$ in one dimension and the $SW_1$, the method in \cite{60} can be applied to $SW_1$ naturally and leads to a weaker condition than Theorem 3 in \cite{61}.

\begin{proposition}
(1) For $\mu \in \mathcal{P}_{2,1}(\mathbb{R}^d)$ and $\nu \in \mathcal{P}(\mathbb{R}^d)$, let $F(\theta,t)=P(\theta^T X\leq t), G(\theta,t)=P(\theta^T Y\leq t)$ for $X\sim \mu, Y\sim \nu$, then 
\[\sqrt{n}H_n \xrightarrow{d} \int_{F>G}G_{\mu}dtd\sigma(\theta)-\int_{F<G}G_{\mu}dtd\sigma(\theta) +\int_{F=G} |G_{\mu}|dtd\sigma(\theta), \]
where 
$G_{\mu}$ is a centered Gaussian process with covariance function
\[cov(G_{\mu}(\theta_1, x_1), G_{\mu}(\theta_2, x_2))=\mu(\theta_1^T x\leq x_1, \theta_2^T x\leq x_2)-\mu(\theta_1^T x\leq x_1)\mu(\theta_2^T x\leq x_2).\]
and $H_n=\sqrt{n}\int_{S^{d-1}}\int_{-\infty}^{\infty} (|F_n(\theta,t)-G(\theta,t)|-|F(\theta,t)-G(\theta,t)|)ddtd\sigma(\theta)$. If in addition $\nu \in \mathcal{P}_1(\mathbb{R}^d)$, then $H_n=SW_1(\mu_n, \nu)-SW_1(\mu,\nu)$.

(2) Let $(X_i, Y_i)_{1\leq i \leq n}$ be a sequence of $i.i.d.$ random variables with values in $\mathbb{R}^{d}\times \mathbb{R}^{d} $ and assume that $\int_{S^{d-1}}\int_{-\infty}^{+\infty} \sqrt{var(I_{\{\theta^T X_1 \leq t\}} - I_{\{\theta^T Y_1 \leq t\}})}dtd\sigma(\theta)<\infty $.  Then  for probability measures $\mu_n= \sum\limits_{i=1}^n \delta_{X_i}/n $ and $\nu_n= \sum\limits_{i=1}^n \delta_{Y_i}/n $, we have
\[\sqrt{n}(SW_1(\mu_n, \nu_n)-SW_1(\mu,\nu))\xrightarrow{d} \int_{F>G}G^{'}dtd\sigma(\theta)-\int_{F<G}G^{'}dtd\sigma(\theta) +\int_{F=G} |G^{'}|dtd\sigma(\theta), \]
where $G^{'}$ is a centered Gaussian process with covariance function
\[cov(G^{'}(\theta_1, x_1), G^{'}(\theta_2, x_2))=cov(I_{\theta_1^T X_1\leq x_1}-I_{\theta_1^T Y_1\leq x_1}, I_{\theta_2^T X_1\leq x_2}-I_{\theta_2^T Y_1\leq x_2}). \]
\end{proposition}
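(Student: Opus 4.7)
The plan is to realize both $H_n$ and $\sqrt{n}(SW_1(\mu_n,\nu_n)-SW_1(\mu,\nu))$ as $\sqrt{n}(\Phi(U_n)-\Phi(U_\infty))$ for a suitable absolute-value functional $\Phi$ on $L^1(S^{d-1}\times\mathbb{R})$, and to apply the functional (directional) delta method. The two ingredients needed are a functional CLT for the underlying empirical process in $L^1(S^{d-1}\times \mathbb{R})$ and the Hadamard directional differentiability of an $L^1$-type absolute-value functional.

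For Part~(1), set $\Phi(u):=\int_{S^{d-1}}\int_{-\infty}^{+\infty}|u(\theta,t)-G(\theta,t)|\,dt\,d\sigma(\theta)$, so that $H_n=\sqrt{n}(\Phi(F_n)-\Phi(F))$. Theorem~1(1) directly supplies $\sqrt{n}(F_n-F)\xrightarrow{d} G_\mu$ in $L^1(S^{d-1}\times \mathbb{R})$ under $\mu\in\mathcal{P}_{2,1}(\mathbb{R}^d)$. Combined with the Hadamard directional derivative
\[D\Phi_F(h)=\int_{F>G}h\,dt\,d\sigma-\int_{F<G}h\,dt\,d\sigma+\int_{F=G}|h|\,dt\,d\sigma,\]
the delta method produces the announced limit. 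The identification $H_n=SW_1(\mu_n,\nu)-SW_1(\mu,\nu)$ when $\nu\in\mathcal{P}_1(\mathbb{R}^d)$ follows from the one-dimensional closed form $W_1(P,Q)=\int|F-G|$ after integrating over $\theta\in S^{d-1}$.

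For Part~(2), write $SW_1(\mu_n,\nu_n)-SW_1(\mu,\nu)=\Psi(F_n-G_n)-\Psi(F-G)$ with $\Psi(u):=\int_{S^{d-1}}\int_{-\infty}^{+\infty}|u|\,dt\,d\sigma$. With $Z_i(\theta,t):=I_{\theta^TX_i\leq t}-I_{\theta^TY_i\leq t}$, the process $\sqrt{n}\bigl((F_n-G_n)-(F-G)\bigr)=n^{-1/2}\sum_{i=1}^n(Z_i-\mathbb{E}Z_i)$ is a sum of i.i.d.\ centered random elements of $L^1(S^{d-1}\times\mathbb{R})$; the hypothesis $\int_{S^{d-1}}\int_{-\infty}^{+\infty}\sqrt{\mathrm{var}(Z_1)}\,dt\,d\sigma<\infty$ is the exact analogue of $\Lambda_{2,1}(X)<\infty$ from Theorem~1, so the same Banach-space CLT argument yields the Gaussian limit $G'$ with the stated covariance. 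Applying the delta method to $\Psi$ at $u_0=F-G$, whose Hadamard directional derivative coincides with the right-hand side of the display above, gives the claim.

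The main obstacle is establishing the Hadamard directional differentiability of the absolute-value functional on $L^1$. For $u_0,h\in L^1$ and sequences $t_n\searrow 0$, $h_n\to h$ in $L^1$, the difference quotients satisfy $\bigl|(|u_0+t_nh_n|-|u_0|)/t_n\bigr|\leq |h_n|$, which is uniformly integrable, and converge pointwise (along an a.e.\ convergent subsequence of $h_n$) to $h\cdot\mathrm{sgn}(u_0)$ on $\{u_0\neq 0\}$ and to $|h|$ on $\{u_0=0\}$; Vitali's theorem then delivers the derivative formula. The subtlety on $\{F=G\}$ --- where the derivative is only sublinear in $h$ --- forces the use of the directional delta method rather than the classical Hadamard version, but does not otherwise complicate the argument.
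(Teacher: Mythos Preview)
Your approach is essentially the paper's, just packaged through the functional delta method rather than done by hand. The paper writes the pointwise identity
\[
|x+h|-|x|=h\,I_{x>0}-h\,I_{x<0}+|h|\,I_{x=0}+2R(h,x),\qquad |R(h,x)|\le|h|\bigl(I_{x+h\ge0,\,x<0}+I_{x+h<0,\,x>0}\bigr),
\]
substitutes $x=F-G$ and $h=F_n-F$ (respectively $h=(F_n-G_n)-(F-G)$ for part~(2)), applies the $L^1$ CLT of Theorem~1 together with the continuous mapping theorem to the three main terms, and then shows $\|R_n\|_{L^1}\to 0$ in probability by dominated convergence. This is precisely your Hadamard directional derivative plus a direct remainder estimate along the random sequence, so the two proofs coincide in substance.

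One small caution on your formulation of part~(1): when $\nu\notin\mathcal{P}_1(\mathbb{R}^d)$, your functional $\Phi(u)=\int_{S^{d-1}}\int_{\mathbb{R}}|u-G|\,dt\,d\sigma$ takes the value $+\infty$ at both $F$ and $F_n$, so the delta method cannot be applied to $\Phi$ as stated. The paper sidesteps this by decomposing the \emph{difference} $|F_n-G|-|F-G|$ pointwise (which is dominated by $|F_n-F|\in L^1$) rather than treating $\Phi$ as a map on $L^1$. Your argument goes through once you replace $\Phi$ by the well-defined Lipschitz map $\tilde\Phi\colon L^1\to\mathbb{R}$, $\tilde\Phi(v)=\int(|v+(F-G)|-|F-G|)$, evaluated at $v=F_n-F$; the Hadamard directional differentiability computation you sketch is then for $\tilde\Phi$ at $0$, with $u_0=F-G$ appearing only as a fixed measurable function (not an element of $L^1$).
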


\begin{remark}
\cite{61} has proved that the processes converge weakly in $L^1(S^{d-1}\times \mathbb{R})$ when $\mu \in \mathcal{P}_{2+\epsilon}(\mathbb{R}^d)$  for any $\epsilon>0$, but from Theorem 1, we can see that it is stronger than our condition since $ \mathcal{P}_{2+\epsilon}(\mathbb{R}^d) \subset \mathcal{P}_{2,1}(\mathbb{R}^d)$ holds for any $\epsilon>0$. Besides, they doesn't show the relation between the stochastical boundness of the sequence in Theorem 1(2) and the condition under which the central limit theorem holds. Comparing to \cite{61}, Theorem 3(i) in \cite{61} requires that $\mu \in \mathcal{P}_{2+\epsilon}(\mathbb{R}^d)$ and $\nu \in \mathcal{P}_{1}(\mathbb{R}^d)$, but the Proposition 1(1) just need that $\mu \in \mathcal{P}_{2,1}(\mathbb{R}^d)$ and $\nu \in \mathcal{P}(\mathbb{R}^d)$. Theorem 3(ii) in \cite{61} is for the condition that $X_i$ is independent of $Y_i$ and requires that $\mu,\nu \in \mathcal{P}_{2+\epsilon}(\mathbb{R}^d)$ for $X_1\sim \mu, Y_1\sim \nu$. Note that the fact that $\mu,\nu \in \mathcal{P}_{2+\epsilon}(\mathbb{R}^d)$ implies that $\int_{S^{d-1}}\int_{-\infty}^{+\infty} \sqrt{var(I_{\{\theta^T X_1 \leq t\}} - I_{\{\theta^T Y_1 \leq t\}})}dtd\sigma(\theta)<\infty $. Note also that if $X_i$ is independent of $Y_i$, $\int_{S^{d-1}}\int_{-\infty}^{+\infty} \sqrt{var(I_{\{\theta^T X_1 \leq t\}} - I_{\{\theta^T Y_1 \leq t\}})}dtd\sigma(\theta)<\infty $ is indeed equivalent to $\Lambda(X_1)<\infty$ and $\Lambda(Y_1)<\infty$.
\end{remark}

There are various results of convergence in expectation of Wasserstein distance in empirical estimation\cite{53,54,55,56}. Inspired by the results in one dimension\cite{13}, we have the results for $SW_1$ as follows.

\begin{proposition}
For any $\mu \in \mathcal{P}_1(\mathbb{R}^d)$, we have $\mathbb{E}[SW_1(\mu_n, \mu)] \to 0 , \ as \ n \to \infty.$	
\end{proposition}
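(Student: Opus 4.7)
The plan is to reduce the claim to the known one-dimensional mean-convergence theorem for $W_1$ via Fubini and dominated convergence over the sphere $S^{d-1}$. First I would write out the definition
\[
\mathbb{E}[SW_1(\mu_n,\mu)] \;=\; \mathbb{E}\!\left[\int_{S^{d-1}} W_1(\theta^{*}_{\#}\mu_n,\,\theta^{*}_{\#}\mu)\,d\sigma(\theta)\right]
\]
and invoke Tonelli (the integrand is nonnegative and measurable in $(\theta,\omega)$) to move the expectation inside:
\[
\mathbb{E}[SW_1(\mu_n,\mu)] \;=\; \int_{S^{d-1}} \mathbb{E}\!\left[W_1(\theta^{*}_{\#}\mu_n,\,\theta^{*}_{\#}\mu)\right] d\sigma(\theta).
\]

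Next, for fixed $\theta\in S^{d-1}$, the pushforward $\theta^{*}_{\#}\mu$ is a probability measure on $\mathbb{R}$, and the $\mu$-integrability of $\|x\|$ gives
\[
\int_{\mathbb{R}} |t|\,d(\theta^{*}_{\#}\mu)(t) \;=\; \int_{\mathbb{R}^d}|\theta^T x|\,d\mu(x) \;\le\; M_1(\mu) \;<\;\infty,
\]
so $\theta^{*}_{\#}\mu\in\mathcal{P}_1(\mathbb{R})$. The classical one-dimensional mean convergence theorem for the empirical $W_1$ (cf.\ reference [13] in the paper) then yields
\[
\mathbb{E}\!\left[W_1(\theta^{*}_{\#}\mu_n,\,\theta^{*}_{\#}\mu)\right] \;\longrightarrow\; 0
\]
for each $\theta$. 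This gives pointwise convergence of the integrand on $S^{d-1}$.

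To close the argument, I would produce a $\theta$-uniform integrable majorant. Using $W_1(\cdot,\delta_0)=\int|t|\,d(\cdot)$ and the triangle inequality,
\[
W_1(\theta^{*}_{\#}\mu_n,\theta^{*}_{\#}\mu) \;\le\; \int_{\mathbb{R}}|t|\,d(\theta^{*}_{\#}\mu_n)(t) + \int_{\mathbb{R}}|t|\,d(\theta^{*}_{\#}\mu)(t) \;\le\; \frac{1}{n}\sum_{i=1}^n \|X_i\| + M_1(\mu),
\]
so taking expectations gives $\mathbb{E}[W_1(\theta^{*}_{\#}\mu_n,\theta^{*}_{\#}\mu)]\le 2M_1(\mu)$ uniformly in $\theta$. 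Since $\sigma$ is a probability measure on $S^{d-1}$, the constant $2M_1(\mu)$ is $\sigma$-integrable, and dominated convergence delivers $\mathbb{E}[SW_1(\mu_n,\mu)]\to 0$.

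There is no real obstacle here beyond verifying measurability in $\theta$ (straightforward from joint continuity of $(\theta,t)\mapsto F(\theta,t)$ arguments, or from writing $W_1$ as $\int|F_n(\theta,\cdot)-F(\theta,\cdot)|$) and appealing to the one-dimensional $L^1$ empirical convergence under only a first-moment hypothesis. The only subtlety worth flagging is that the one-dimensional result does not give a rate under mere $\mathcal{P}_1$, but a rate is not needed: qualitative convergence plus the uniform bound $2M_1(\mu)$ is exactly what the dominated convergence theorem requires.
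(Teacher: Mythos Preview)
Your proposal is correct and follows essentially the same approach as the paper: Fubini/Tonelli to write $\mathbb{E}[SW_1(\mu_n,\mu)]=\int_{S^{d-1}}\mathbb{E}[W_1(\theta^{*}_{\#}\mu_n,\theta^{*}_{\#}\mu)]\,d\sigma(\theta)$, pointwise convergence from the one-dimensional result (Theorem~2.14 in~\cite{13}), and dominated convergence via a $\theta$-uniform majorant. The only cosmetic difference is in producing that majorant: the paper bounds $\mathbb{E}|F_n(\theta,t)-F(\theta,t)|$ by $2\min(F(\theta,t),1-F(\theta,t))$ and integrates in $t$ to get $4\mathbb{E}\|X\|$, whereas your triangle-inequality argument via $W_1(\cdot,\delta_0)$ gives the slightly sharper constant $2\mathbb{E}\|X\|$; both are equally adequate here.
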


And for the convergence rate, we have

\begin{corollary}
Given $\mu \in \mathcal{P}_1(\mathbb{R}^d)$, then $\mathbb{E}[SW_1(\mu_n, \mu)] = \mathcal{O}(\frac{1}{\sqrt{n}})$ if and only if $\Lambda_{2,1}(X) < \infty$ for $X \sim \mu$.
\end{corollary}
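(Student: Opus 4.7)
The plan is to prove the two implications separately, exploiting Theorem 1(2) for the necessity and a direct variance computation for the sufficiency.

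For the necessity direction, suppose $\mathbb{E}[SW_1(\mu_n,\mu)]=\mathcal{O}(1/\sqrt{n})$. Using the representation
\[
\sqrt{n}\,SW_1(\mu_n,\mu)=\Bigl\|\sum_{i=1}^n Y_i/\sqrt{n}\Bigr\|_{L^1(S^{d-1}\times \mathbb{R})},
\]
the assumption says that this nonnegative random variable has uniformly bounded expectation. Markov's inequality then gives stochastic boundedness of the sequence, and Theorem 1(2) forces $\Lambda_{2,1}(X)<\infty$. No further work is needed here.

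For the sufficiency direction, I would bound $\mathbb{E}[SW_1(\mu_n,\mu)]$ explicitly. By Fubini,
\[
\mathbb{E}[SW_1(\mu_n,\mu)]=\int_{S^{d-1}}\int_{-\infty}^{+\infty}\mathbb{E}\bigl|F_n(\theta,t)-F(\theta,t)\bigr|\,dt\,d\sigma(\theta).
\]
Since $nF_n(\theta,t)$ is $\operatorname{Binomial}(n,F(\theta,t))$, Jensen's inequality gives the pointwise bound $\mathbb{E}|F_n(\theta,t)-F(\theta,t)|\leq \sqrt{F(\theta,t)(1-F(\theta,t))/n}$. The key step is then to dominate the one-dimensional tails by the radial tails of $\|X\|$: since $\|\theta\|=1$, Cauchy--Schwarz gives $|\theta^TX|\leq \|X\|$, whence $1-F(\theta,t)\leq P(\|X\|>t)$ for $t>0$ and $F(\theta,t)\leq P(\|X\|>|t|)$ for $t<0$. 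Using $\sqrt{F(1-F)}\leq \sqrt{\min(F,1-F)}$ and splitting the $t$-integral at $0$,
\[
\int_{-\infty}^{+\infty}\sqrt{F(\theta,t)(1-F(\theta,t))}\,dt\leq 2\int_0^{+\infty}\sqrt{P(\|X\|>t)}\,dt=2\Lambda_{2,1}(X),
\]
uniformly in $\theta$. Integrating over $S^{d-1}$ (which has total mass $1$) yields $\mathbb{E}[SW_1(\mu_n,\mu)]\leq 2\Lambda_{2,1}(X)/\sqrt{n}$, giving the desired $\mathcal{O}(1/\sqrt{n})$ rate.

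The only non-routine ingredient is the projection-to-radial tail comparison, but that is immediate from $\|\theta\|=1$. Everything else is Jensen plus Fubini on the sufficiency side and Markov plus Theorem 1(2) on the necessity side, so no serious obstacle is expected.
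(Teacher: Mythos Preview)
Your proof is correct and follows essentially the same route as the paper: for necessity you both use Markov's inequality to pass from bounded expectation to stochastic boundedness and invoke Theorem~1(2); for sufficiency you both apply Fubini and the variance bound $\mathbb{E}|F_n-F|\leq \sqrt{F(1-F)/n}$ to reduce to the finiteness of $\int_{S^{d-1}}\int_{-\infty}^{+\infty}\sqrt{F(\theta,t)(1-F(\theta,t))}\,dt\,d\sigma(\theta)$. The only difference is cosmetic: the paper cites the equivalence between this double integral and $\Lambda_{2,1}(X)<\infty$ established in the proof of Theorem~1, whereas you rederive the easy direction directly via $\sqrt{F(1-F)}\leq\sqrt{\min(F,1-F)}$ and the tail comparison $|\theta^TX|\leq\|X\|$, obtaining the clean explicit bound $2\Lambda_{2,1}(X)/\sqrt{n}$.
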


\begin{remark}
By Theorem 1 and Corollary 2, we obtain the equivalence among the central limit theorem for $SW_1$, stochastical boundness of the sequence in Theorem 1(2) and $\Lambda_{2,1}(X) < \infty$ for $X\sim \mu$. Moreover, $\frac{1}{\sqrt{n}}$ is the best rate for $\mathbb{E}[SW_1(\mu_n, \mu)]$ under that $\mu$ is not degenerate.
\end{remark}

\begin{proposition}
For $\mu \in \mathcal{P}_1(\mathbb{R}^d)$ that is not degenerate, we have 
\[\mathbb{E}[SW_1(\mu_n, \mu)] \geq c \frac{\mathbb{E}\|X-\mathbb{E}X\|}{\sqrt{n}} ,\]
where $X\sim \mu$ and $c$ is an absolute positive constant.
\end{proposition}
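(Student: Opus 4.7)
The plan is to bound $SW_1(\mu_n,\mu)$ from below by the Euclidean deviation $\|\bar X_n-\mathbb{E}X\|$ using only the mean functional in the one-dimensional Kantorovich dual, and then to lower-bound $\mathbb{E}\|\bar X_n-\mathbb{E}X\|$ by a symmetrization plus Khintchine--Kahane argument in the Hilbert space $\mathbb{R}^d$.

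For the first step, I fix $\theta\in S^{d-1}$. Since $\mu\in\mathcal{P}_1(\mathbb{R}^d)$ gives $\theta_{\#}^{*}\mu\in\mathcal{P}_1(\mathbb{R})$, the closed form $W_1(\theta_{\#}^{*}\mu_n,\theta_{\#}^{*}\mu)=\int_{\mathbb{R}}|F_n(\theta,t)-F(\theta,t)|\,dt$ from Section~2 is available. Combining the trivial inequality $\int|f|\,dt\geq|\int f\,dt|$ with the CDF--mean identity $\int_{\mathbb{R}}(H_1-H_2)\,dt=\mathbb{E}Y_2-\mathbb{E}Y_1$ (valid for integrable CDFs) yields
\[
W_1(\theta_{\#}^{*}\mu_n,\theta_{\#}^{*}\mu)\;\geq\;\bigl|\theta^{T}(\bar X_n-\mathbb{E}X)\bigr|,
\]
with $\bar X_n=\tfrac{1}{n}\sum_{i=1}^{n}X_i$. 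Integrating over $\theta\in S^{d-1}$ via Fubini and taking expectations gives
\[
\mathbb{E}[SW_1(\mu_n,\mu)]\;\geq\;c_d\,\mathbb{E}\|\bar X_n-\mathbb{E}X\|,\qquad c_d:=\int_{S^{d-1}}|\theta_1|\,d\sigma(\theta)>0.
\]

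It remains to prove the vector-valued Khintchine inequality $\mathbb{E}\|\bar X_n-\mathbb{E}X\|\geq c_0\,\mathbb{E}\|X-\mathbb{E}X\|/\sqrt{n}$ for some absolute $c_0>0$. I will introduce an independent copy $(X_i')$ of $(X_i)$ and set $Z_i=X_i-X_i'$, which are i.i.d.\ and symmetric in $\mathbb{R}^d$. The triangle inequality yields $\mathbb{E}\|\sum_i(X_i-\mathbb{E}X)\|\geq\tfrac{1}{2}\mathbb{E}\|\sum_iZ_i\|$, and by symmetry $\sum_iZ_i\stackrel{d}{=}\sum_i\epsilon_iZ_i$ for i.i.d.\ Rademacher $\epsilon_i$ independent of $(Z_i)$. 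Conditioning on $(Z_i)$ and applying the Khintchine--Kahane inequality in the Hilbert space $\mathbb{R}^d$ furnishes an absolute constant with
\[
\mathbb{E}_{\epsilon}\Bigl\|\sum_{i=1}^{n}\epsilon_iZ_i\Bigr\|\;\gtrsim\;\Bigl(\sum_{i=1}^{n}\|Z_i\|^2\Bigr)^{1/2}\;\geq\;\frac{1}{\sqrt{n}}\sum_{i=1}^{n}\|Z_i\|,
\]
the second inequality being Cauchy--Schwarz. Taking expectation against $(Z_i)$ and invoking $\mathbb{E}\|X-X'\|\geq\mathbb{E}\|X-\mathbb{E}X\|$ (Jensen with respect to $X'$) finishes this step. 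Chaining the two lower bounds and absorbing constants completes the proof; non-degeneracy of $\mu$ enters only to guarantee $\mathbb{E}\|X-\mathbb{E}X\|>0$, so that the right-hand side is nontrivial.

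The only non-elementary input is the Khintchine--Kahane lower bound, which I use as a black box: for vectors in any Hilbert space one has $\mathbb{E}\|\sum\epsilon_iv_i\|\gtrsim(\sum\|v_i\|^2)^{1/2}$ with an absolute constant, because the $L^p$-norms of a Rademacher sum there are mutually equivalent up to absolute constants. Everything else---Kantorovich duality through the mean functional $f(t)=t$, Fubini, symmetrization, Cauchy--Schwarz, Jensen---is routine. The one point I will need to verify carefully is that dropping the absolute value in the first step is legitimate under only the first moment assumption, which is immediate because every projection $\theta_{\#}^{*}\mu$ lies in $\mathcal{P}_1(\mathbb{R})$.
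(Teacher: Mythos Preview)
Your argument is correct and takes a genuinely different route from the paper's. The paper works pointwise in $(\theta,t)$: it writes $\mathbb{E}[SW_1(\mu_n,\mu)]=\int\!\!\int\mathbb{E}|F_n(\theta,t)-F(\theta,t)|\,dt\,d\sigma(\theta)$, applies a scalar Khintchine-type inequality (Lemma~3.4 of \cite{13}) to the indicators $I_{\theta^TX_i\le t}-F(\theta,t)$ to get $\mathbb{E}|F_n-F|\gtrsim n^{-1/2}F(1-F)$, then invokes the identity $\int F(1-F)\,dt=\tfrac12\mathbb{E}|\theta^TX-\theta^TX'|$ before integrating over $S^{d-1}$ and finishing with the same Jensen step $\mathbb{E}\|X-X'\|\ge\mathbb{E}\|X-\mathbb{E}X\|$ that you use. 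Your approach instead collapses each one-dimensional $W_1$ to the single linear test function $f(t)=t$, reducing immediately to $\mathbb{E}\|\bar X_n-\mathbb{E}X\|$, and then runs symmetrization plus Khintchine--Kahane once in the Hilbert space $\mathbb{R}^d$. Your argument is shorter and avoids the CDF identity; the paper's route, by staying with the full CDF difference, yields the sharper intermediate bound $\mathbb{E}[SW_1(\mu_n,\mu)]\gtrsim n^{-1/2}\int_{S^{d-1}}\!\int F(\theta,t)(1-F(\theta,t))\,dt\,d\sigma(\theta)$, which can be strictly larger than the mean-based bound for heavy-tailed or multimodal projections. Both proofs pick up the same dimension-dependent spherical factor $\int_{S^{d-1}}|\theta_1|\,d\sigma(\theta)$, so in either case the final constant depends on $d$ despite the ``absolute constant'' phrasing in the statement.
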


\section{The aspect of computing $SW_p$}
Despite that $SW_p$ has a low computational cost, it needs many random projections in high dimension to make the approximation error small when using Monte Carlo method. Inspired by the fact that one-dimensional projections of a high-dimensional random vector are approximately Gaussian under mild assumptions, \cite{6} developed a simple deterministic approximation for $SW_2$ without requiring random projections. But their method can't obtain an approximation with arbitrarily small error and just works for $SW_2$. By noticing the phenomenon of concentration of measure, we can give the number of random projections that can make error small in high probability when computing $SW_p$ based on Monte Carlo approximation.

\begin{proposition}
For fixed $\epsilon$ and $\delta$, when the number of random projections $n$ satisfies that $ n \geq\frac{2L^2}{(d-1) \epsilon^2 } \log\frac{2}{\delta}$, we have
\[P\left(|SW_p^p(\mu, \nu) - {SW}_{p,n}^p(\mu,\nu)| \geq \epsilon \right) \leq \delta,\]
where ${SW}_{p,n}^p(\mu, \nu) = \frac{1}{n}\sum\limits_{i=1}^n W_p^p({\theta_i}_{\#}^{*}\mu, {\theta_i}_{\#}^{*}\nu)$, $\{\theta_i\}_{i=1}^n$ i.i.d. from $\sigma$ and $L=pW_p^{p-1}(\mu, \nu)(M_p(\mu)+M_p(\nu))$. 
\end{proposition}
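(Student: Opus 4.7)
The plan is to view $\widehat{SW}_{p,n}^p(\mu,\nu) = \frac{1}{n}\sum_{i=1}^n f(\theta_i)$ as an empirical mean of i.i.d.\ random variables $f(\theta_i)$, where $f: S^{d-1} \to \mathbb{R}$ is defined by $f(\theta) = W_p^p(\theta_{\#}^{*}\mu,\theta_{\#}^{*}\nu)$, and $SW_p^p(\mu,\nu) = \int_{S^{d-1}} f\,d\sigma$. Concentration of measure on the sphere will then provide sub-Gaussian tails for $f(\theta) - \mathbb{E}_\sigma[f]$ with variance proxy $L^2/(d-1)$, and a standard Chernoff bound for i.i.d.\ sub-Gaussians will yield the claimed number of projections. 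The substantive work is therefore to verify that $f$ is Lipschitz on $S^{d-1}$ with constant exactly $L = pW_p^{p-1}(\mu,\nu)(M_p(\mu)+M_p(\nu))$.

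First I would establish the Lipschitz bound for $f$. Using the elementary inequality $|a^p - b^p| \leq p\max(a,b)^{p-1}|a - b|$ together with the fact that $W_p(\theta_{\#}^{*}\mu,\theta_{\#}^{*}\nu) \leq W_p(\mu,\nu)$ for every $\theta \in S^{d-1}$ (obtained by pushing forward the optimal $(\mu,\nu)$-coupling through the $1$-Lipschitz map $x\mapsto \theta^T x$), the task reduces to bounding $|W_p(\theta_{1\#}^{*}\mu,\theta_{1\#}^{*}\nu) - W_p(\theta_{2\#}^{*}\mu,\theta_{2\#}^{*}\nu)|$ by $\|\theta_1-\theta_2\|(M_p(\mu)+M_p(\nu))$. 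This follows from the triangle inequality for $W_p$ combined with the explicit coupling $(\theta_1^T x,\theta_2^T x)_{\#}\mu$ of the two marginal pushforwards, which gives
\[
W_p^p(\theta_{1\#}^{*}\mu,\theta_{2\#}^{*}\mu) \leq \int_{\mathbb{R}^d} |\theta_1^T x - \theta_2^T x|^p\,d\mu(x) \leq \|\theta_1-\theta_2\|^p M_p(\mu)^p,
\]
and analogously for $\nu$.

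Second, I would invoke Lévy's concentration of measure on $(S^{d-1},\sigma)$: any $L$-Lipschitz function $f$ satisfies
\[
\sigma\bigl(\{\theta:|f(\theta) - \mathbb{E}_\sigma f| \geq t\}\bigr) \leq 2\exp\Bigl(-\tfrac{(d-1)t^2}{2L^2}\Bigr),
\]
so each $f(\theta_i)$ is sub-Gaussian with variance proxy $L^2/(d-1)$. The standard Chernoff bound for the empirical mean of $n$ i.i.d.\ sub-Gaussians then gives
\[
P\Bigl(\bigl|\widehat{SW}_{p,n}^p(\mu,\nu) - SW_p^p(\mu,\nu)\bigr| \geq \epsilon\Bigr) \leq 2\exp\Bigl(-\tfrac{n(d-1)\epsilon^2}{2L^2}\Bigr),
\]
and requiring the right-hand side to be at most $\delta$ yields exactly the threshold $n\geq \tfrac{2L^2}{(d-1)\epsilon^2}\log\tfrac{2}{\delta}$.

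The main obstacle is the Lipschitz estimate in the first step, because landing on precisely the constant $pW_p^{p-1}(\mu,\nu)(M_p(\mu)+M_p(\nu))$ requires combining the inequality for $a^p-b^p$ with the monotonicity $W_p(\theta_{\#}^{*}\mu,\theta_{\#}^{*}\nu)\leq W_p(\mu,\nu)$ to extract the $W_p^{p-1}$ factor, and then the canonical coupling above to extract the $M_p$ factors. Once this is in hand, the remaining steps are routine applications of well-known concentration results for Lipschitz functions on the sphere.
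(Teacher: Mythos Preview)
Your proposal is correct and follows essentially the same approach as the paper: establish the Lipschitz constant $L$ for $\theta\mapsto W_p^p(\theta_{\#}^{*}\mu,\theta_{\#}^{*}\nu)$ via the triangle inequality for $W_p$, the coupling $(\theta_1^T x,\theta_2^T x)_{\#}\mu$, and the bound $W_p(\theta_{\#}^{*}\mu,\theta_{\#}^{*}\nu)\le W_p(\mu,\nu)$, then apply sub-Gaussian concentration on $S^{d-1}$ with variance proxy $1/(d-1)$ together with Hoeffding's inequality. The only cosmetic differences are that the paper writes the $p$th-power step via the Lagrange mean value theorem rather than your inequality $|a^p-b^p|\le p\max(a,b)^{p-1}|a-b|$, and it explicitly notes that the Euclidean Lipschitz bound implies the geodesic one before invoking the Ricci-curvature form of spherical concentration.
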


Notice that $|x-y|^p \leq |x^p-y^p|$ holds for any positive number $x,y$ and $p \geq 1$. Then by Proposition 2, we obtain the result for $SW_p$.  

\begin{corollary}
For fixed $\epsilon$ and $\delta$, when the number of random projections $n$ satisfies that $ n \geq \frac{2L^2}{(d-1) \epsilon^{2p} } \log\frac{2}{\delta}$, we have
\[P\left(|SW_p(\mu, \nu) - {SW}_{p,n}(\mu,\nu)| \geq \epsilon \right) \leq \delta,\]
where ${SW}_{p,n}(\mu, \nu) = \left({SW}_{p,n}^p(\mu, \nu) \right)^{1/p}$.
\end{corollary}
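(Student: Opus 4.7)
The plan is to reduce Corollary 2 to Proposition 2 using exactly the elementary inequality $|x-y|^p \le |x^p - y^p|$ for $x,y\ge 0$ and $p\ge 1$ that is flagged immediately before the statement. First I would justify that inequality: assuming without loss of generality $x\ge y\ge 0$ and setting $f(t) = (t-y)^p - (t^p - y^p)$ for $t\ge y$, we have $f(y)=0$ and
\[
f'(t) = p(t-y)^{p-1} - p\, t^{p-1} \le 0
\]
because $t-y \le t$ and $p-1 \ge 0$, so $f$ is nonincreasing on $[y,\infty)$, yielding $(x-y)^p \le x^p - y^p$.

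Next I would apply this with the pair $(x,y) = (SW_{p,n}(\mu,\nu), SW_p(\mu,\nu))$ (or the reverse, depending on which quantity is larger), which is legitimate since both quantities are nonnegative. This gives
\[
|SW_{p,n}(\mu,\nu) - SW_p(\mu,\nu)|^p \le |SW_{p,n}^p(\mu,\nu) - SW_p^p(\mu,\nu)|,
\]
so the event $\{|SW_p(\mu,\nu) - SW_{p,n}(\mu,\nu)| \ge \epsilon\}$ is contained in $\{|SW_p^p(\mu,\nu) - SW_{p,n}^p(\mu,\nu)| \ge \epsilon^p\}$. Invoking Proposition 2 with $\epsilon$ replaced by $\epsilon^p$ and the same $\delta$, whenever
\[
n \ge \frac{2L^2}{(d-1)\,\epsilon^{2p}}\log\frac{2}{\delta},
\]
the probability of the larger event is at most $\delta$, and monotonicity of $P$ transfers the bound to the event in the corollary.

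There is essentially no obstacle: the argument is one algebraic inequality plus a direct appeal to Proposition 2, and the constant $L$ plays exactly the same role. The only points I would double-check are that $SW_{p,n}^p(\mu,\nu)$ is nonnegative (clear from its definition as an average of $p$-th powers of Wasserstein distances), so that taking $p$-th roots is well defined, and that the inclusion of events survives when the role of $SW_p$ and $SW_{p,n}$ is swapped, which it does because the inequality $|x-y|^p \le |x^p-y^p|$ is symmetric in $x$ and $y$.
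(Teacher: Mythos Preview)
Your proposal is correct and follows exactly the paper's approach: the paper simply notes the inequality $|x-y|^p \le |x^p-y^p|$ for $x,y\ge 0$, $p\ge 1$, and says the corollary then follows from Proposition~2. You have merely supplied the (straightforward) details of that one-line argument.
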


From the lower bound of $n$ in Corollary 3, we can see that for $p \neq 1$, there should be a proper prior estimation of $W_p(\mu, \nu)$ first in practice. Thus there is an obstacle between theory and application. But for $p=1$, there is no such problem and $n$ may be independent of $d$ under some condition.

\begin{corollary}
	 For $n \geq \frac{4(\Delta_{\mu}+\Delta_{\nu})^2}{ \epsilon^{2} }\log\frac{2}{\delta}$, we have 
\[P\left(|SW_1(\mu, \nu) - {SW}_{1,n}(\mu,\nu)| \geq \epsilon \right) \leq \delta, \]
where $\Delta_{\mu}=\max_{i}M_2((\pi_i)_{\#}\mu)$ and $\Delta_{\nu}=\max_{i}M_2((\pi_i)_{\#}\nu)$.
\end{corollary}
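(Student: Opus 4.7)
The plan is to reduce Corollary 4 directly to Corollary 3 specialized to $p=1$, and then to re-express the constant $L$ appearing there in terms of the coordinate second moments $\Delta_\mu$ and $\Delta_\nu$. Recall from Proposition 2 that for general $p$ one has $L = pW_p^{p-1}(\mu,\nu)(M_p(\mu)+M_p(\nu))$, but for $p=1$ the factor $W_1^{0}(\mu,\nu) = 1$, so $L = M_1(\mu) + M_1(\nu)$, and Corollary 3 already yields the desired probability bound as soon as $n \geq \frac{2L^2}{(d-1)\epsilon^2}\log\frac{2}{\delta}$. It therefore suffices to verify that the hypothesis $n \geq \frac{4(\Delta_\mu+\Delta_\nu)^2}{\epsilon^2}\log\frac{2}{\delta}$ of Corollary 4 implies this inequality.

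The heart of the argument is a clean dimension cancellation: although $M_1(\mu)$ can grow like $\sqrt{d}$, this growth is absorbed exactly by the $1/(d-1)$ factor coming from L\'evy's concentration on $S^{d-1}$ that drives Proposition 2. I would carry this out by first applying Jensen's inequality to get $M_1(\mu) \leq M_2(\mu)$ and then decomposing the second moment in coordinates,
\[ M_2(\mu)^2 = \mathbb{E}\|X\|^2 = \sum_{i=1}^d \mathbb{E}X_i^2 = \sum_{i=1}^d M_2((\pi_i)_{\#}\mu)^2 \leq d\,\Delta_\mu^2,\]
which gives $M_1(\mu) \leq \sqrt{d}\,\Delta_\mu$ and analogously $M_1(\nu) \leq \sqrt{d}\,\Delta_\nu$, so $L^2 \leq d(\Delta_\mu+\Delta_\nu)^2$. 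Combining with the elementary bound $\frac{2d}{d-1} \leq 4$, valid for $d \geq 2$, produces $\frac{2L^2}{d-1} \leq 4(\Delta_\mu+\Delta_\nu)^2$, which is exactly what is needed; the degenerate case $d=1$ is trivial since $SW_1 = W_1$ and no Monte Carlo step arises.

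There is no real obstacle in the argument; its entire substance is the dimension cancellation just described, which is specific to $p=1$ because only then does the prefactor $W_p^{p-1}$ reduce to $1$ and disappear from the formula for $L$. This is also why, in contrast to the discussion after Corollary 3 for general $p$, the bound here does not require any prior estimate of $W_p(\mu,\nu)$ and can be stated purely in terms of marginal second moments of $\mu$ and $\nu$.
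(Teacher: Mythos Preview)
Your proof is correct and follows exactly the same route as the paper: bound $M_2(\mu)^2=\sum_i M_2((\pi_i)_\#\mu)^2\le d\,\Delta_\mu^2$, feed this into the $p=1$ case of the Monte Carlo bound (where $L=M_1(\mu)+M_1(\nu)\le M_2(\mu)+M_2(\nu)$), and absorb the factor $d$ using $\frac{d}{d-1}\le 2$. The paper's proof is terser and leaves the Jensen step $M_1\le M_2$ implicit, but the argument is identical.
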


Thus if there is a dimension independent upper bound for $\max_{i}M_2((\pi_i)_{\#}\mu)$ and $\max_{i}M_2((\pi_i)_{\#}\nu)$, then $n$ is independent of $d$. For example, let $(X_i)_{i\in\mathbb{N}}$ be a stationary sequence of one-dimensional random variables, i.e. $X_i$ and $X_j$ have the same distribution for any $i,j \in\mathbb{N}$ and $(Y_i)_{i\in\mathbb{N}}$ be another stationary sequence. Denote by $\mu_d$ and $\nu_d$ the distribution of $(X_1,\cdots, X_d)$ and $(Y_1,\cdots, Y_d)$ respectively. Then by Corollary 3, $n$ is independent of $d$.

For $p\neq 1$, the problem can be handled by replace $SW_p$ with a variant of it, which is defined as 
$$\widehat{SW_p}(\mu, \nu) = \int_{S^{d-1}} W_p({\theta}_{\#}^{*}\mu, {\theta}_{\#}^{*}\mu) d\sigma(\theta).$$

It's easy to check that $\widehat{SW_p}$ is indeed a distance. Then, based on the same technique, we have: 
\begin{corollary}
For fixed $\epsilon$ and $\delta$, when $n \geq\frac{2\tilde{L}^2}{(d-1) \epsilon^2 } \log\frac{2}{\delta}$, we have
\[P(|\widehat{SW}_p(\mu, \nu)-\widehat{SW}_{p,n}(\mu, \nu)|\geq \epsilon) \leq \delta,\]
where $\widehat{SW}_{p,n}(\mu, \nu) =\frac{1}{n}\sum\limits_{i=1}^n W_p({\theta_i}_{\#}^{*}\mu, {\theta_i}_{\#}^{*}\nu)$, $\{\theta_i\}_{i=1}^n$ i.i.d. from $\sigma$ and $\tilde{L}=M_p(\mu)+M_p(\nu)$. 
\end{corollary}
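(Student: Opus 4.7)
The plan is to adapt the proof of Proposition 2, replacing the integrand $W_p^p$ by $W_p$. The effect of this change is that the Lipschitz constant on $S^{d-1}$ of the map $\theta\mapsto W_p(\theta_{\#}^{*}\mu,\theta_{\#}^{*}\nu)$ drops from the Proposition 2 value $L=pW_p^{p-1}(\mu,\nu)(M_p(\mu)+M_p(\nu))$ to simply $\tilde L=M_p(\mu)+M_p(\nu)$, which removes the dependence on the unknown quantity $W_p(\mu,\nu)$ and hence the practical need for a prior estimate. The rest of the machinery---concentration of Lipschitz functions on the sphere plus a Hoeffding-type bound for sub-Gaussian averages---carries over verbatim from Proposition 2.

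First I would verify that $f(\theta):=W_p(\theta_{\#}^{*}\mu,\theta_{\#}^{*}\nu)$ is $\tilde L$-Lipschitz on $S^{d-1}$. By the triangle inequality for $W_p$,
\[
|f(\theta_1)-f(\theta_2)|\leq W_p({\theta_1}_{\#}^{*}\mu,{\theta_2}_{\#}^{*}\mu)+W_p({\theta_1}_{\#}^{*}\nu,{\theta_2}_{\#}^{*}\nu).
\]
Using the coupling $x\mapsto(\theta_1^T x,\theta_2^T x)$ under $\mu$ (respectively $\nu$), whose marginals are ${\theta_1}_{\#}^{*}\mu$ and ${\theta_2}_{\#}^{*}\mu$, and applying Cauchy--Schwarz inside the $L^p$ norm, I obtain $W_p({\theta_1}_{\#}^{*}\mu,{\theta_2}_{\#}^{*}\mu)\leq\|\theta_1-\theta_2\|\,M_p(\mu)$ and the analogue for $\nu$. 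Summing the two bounds yields the Lipschitz constant $\tilde L$.

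Next I invoke L\'evy's concentration inequality for Lipschitz functions on the sphere: since $f$ is $\tilde L$-Lipschitz and $\theta\sim\sigma$, the variable $f(\theta)$ is sub-Gaussian about its mean with variance proxy $\tilde L^2/(d-1)$, and by definition $\mathbb{E}[f(\theta)]=\widehat{SW}_p(\mu,\nu)$. Consequently $\widehat{SW}_{p,n}(\mu,\nu)=\frac{1}{n}\sum_{i=1}^n f(\theta_i)$ is the mean of $n$ i.i.d.\ sub-Gaussian variables with common mean $\widehat{SW}_p(\mu,\nu)$, and the standard tail bound for sub-Gaussian averages produces
\[
P\bigl(|\widehat{SW}_p(\mu,\nu)-\widehat{SW}_{p,n}(\mu,\nu)|\geq\epsilon\bigr)\leq 2\exp\!\left(-\frac{n(d-1)\epsilon^2}{2\tilde L^2}\right).
\]
Setting the right-hand side equal to $\delta$ and solving for $n$ produces the stated threshold. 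The only nontrivial verification is the Lipschitz estimate in the first step; once that is in hand the rest is a direct transplant of Proposition 2 with no new obstacle.
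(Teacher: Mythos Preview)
Your proposal is correct and follows essentially the same route as the paper. The paper does not give a separate proof for this corollary; it simply states that the result follows ``based on the same technique'' as Proposition~2, and your write-up carries out precisely that adaptation: replace $W_p^p$ by $W_p$, drop the mean-value-theorem step so that the Lipschitz constant becomes $\tilde L=M_p(\mu)+M_p(\nu)$ via the triangle inequality and the same coupling bound $W_p({\theta_1}_{\#}^{*}\mu,{\theta_2}_{\#}^{*}\mu)\le\|\theta_1-\theta_2\|M_p(\mu)$, and then invoke spherical concentration plus Hoeffding exactly as before.
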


\begin{remark} 
With no priori information about $\mu$ and $\nu$, it may be reasonable to choose random projections uniformly distributed on the unit sphere. There are also variant potential distribution for random projections deserved to be explored. For example, \cite{6} replaced the uniform distribution by a zero-mean Gaussian distribution with covariance matrix $(1/d)I_d$ and showed that there are equivalent up to a constant that only depends the dimension $d$ and the order $p$. Here, we present the result.
\[\widetilde{SW}_p^p(\mu, \nu):= \int_{R^d} W_p^p({\theta}_{\#}^{*}\mu, {\theta}_{\#}^{*}\nu)d\gamma_d(\theta) =(2/d)^{1/2}\{\Gamma(d/2+p/2)/\Gamma(d/2)\}^{1/p}SW_p^p(\mu, \nu) ,\]
where $p \in [1,+\infty)$, $\Gamma$ is the Gamma function and $\gamma_d$ is the zero-mean Gaussian distribution with covariance matrix $(1/d)I_d$. In particular, when $p=2$, $ \widetilde{SW}_2^2(\mu, \nu)=SW_2^2(\mu, \nu)$. Let $c_{p,d}$ denote the constant $(2/d)^{1/2}\{\Gamma(d/2+p/2)/\Gamma(d/2)\}^{1/p}$, i.e. $\widetilde{SW}_p^p(\mu, \nu)=c_{p,d}SW_p^p(\mu, \nu)$. 
	
Under the observation that the Lipschitz function of Gaussian variables is sub-Gaussian (see Theorem 2.26 in \cite{14}), we can obtain similar results for $\widetilde{SW}_p$. 
\end{remark}

\begin{corollary}
For fixed $\epsilon$ and $\delta$, when $n \geq \frac{2L^2}{d^p \epsilon^2} \log{\frac{2}{\delta}}$, we have
\[P(|\widetilde{SW}_p^p(\mu, \nu)-\frac{1}{d^{p/2 }}\overline{SW}_{p,n}^n | \geq \epsilon) \leq \delta,\]
where $\overline{SW}_{p,n}^p(\mu, \nu) =\frac{1}{n}\sum\limits_{i=1}^n W_p({\theta_i}_{\#}^{*}\mu, {\theta_i}_{\#}^{*}\nu)$, $\{\theta_i\}_{i=1}^n$ i.i.d. from the zero-mean Gaussian distribution with covariance matrix $I_d$ and $L$ is the same as in Proposition 2. 
\end{corollary}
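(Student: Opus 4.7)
The plan is to adapt the argument of Proposition 2, substituting Gaussian concentration for spherical concentration as anticipated in the preceding remark. First, I would exploit the $1$-homogeneity of the Wasserstein distance,
\[
W_p^p({(c\theta)}_{\#}^{*}\mu,{(c\theta)}_{\#}^{*}\nu)=c^{p}\,W_p^p(\theta_{\#}^{*}\mu,\theta_{\#}^{*}\nu)\qquad(c>0),
\]
together with the reparameterization $\eta=\sqrt{d}\,\theta$ (so that $\eta\sim\mathcal{N}(0,I_d)$ whenever $\theta\sim\gamma_d$), to recast the Gaussian-averaged distance as
\[
\widetilde{SW}_p^p(\mu,\nu)=\frac{1}{d^{p/2}}\,\mathbb{E}_{\eta\sim\mathcal{N}(0,I_d)}\!\left[W_p^p(\eta_{\#}^{*}\mu,\eta_{\#}^{*}\nu)\right].
\]
Setting $Z_i:=W_p^p({\theta_i}_{\#}^{*}\mu,{\theta_i}_{\#}^{*}\nu)$ with $\theta_i\sim\mathcal{N}(0,I_d)$ as in the statement, the target event in the corollary is then identified with a deviation of $\tfrac{1}{n}\sum_{i}Z_i$ from $\mathbb{E}Z_1$ of at least $d^{p/2}\epsilon$.

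The core step is a Lipschitz-concentration argument. Cauchy--Schwarz gives $|\eta_1^{T}x-\eta_2^{T}x|\leq\|\eta_1-\eta_2\|\,\|x\|$, whence $W_p({\eta_1}_{\#}^{*}\mu,{\eta_2}_{\#}^{*}\mu)\leq\|\eta_1-\eta_2\|\,M_p(\mu)$ by pushing the common coupling through the Lipschitz map; the triangle inequality then yields
\[
\bigl|W_p({\eta_1}_{\#}^{*}\mu,{\eta_1}_{\#}^{*}\nu)-W_p({\eta_2}_{\#}^{*}\mu,{\eta_2}_{\#}^{*}\nu)\bigr|\leq (M_p(\mu)+M_p(\nu))\|\eta_1-\eta_2\|.
\]
Combining with the scaling $W_p(\eta_{\#}^{*}\mu,\eta_{\#}^{*}\nu)\leq\|\eta\|\,W_p(\mu,\nu)$ and the elementary inequality $|a^{p}-b^{p}|\leq p\max(a,b)^{p-1}|a-b|$, one obtains the Lipschitz constant $L=pW_p^{p-1}(\mu,\nu)(M_p(\mu)+M_p(\nu))$ of Proposition 2 for the map $\eta\mapsto W_p^p(\eta_{\#}^{*}\mu,\eta_{\#}^{*}\nu)$ on the relevant bounded-$\|\eta\|$ regime. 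Invoking Theorem 2.26 of \cite{14} (Lipschitz functions of a standard Gaussian are sub-Gaussian with parameter equal to the squared Lipschitz constant) makes each $Z_i-\mathbb{E}Z_1$ sub-Gaussian with parameter $L^{2}$, and a Hoeffding-type inequality for averages of sub-Gaussian variables then yields
\[
P\!\left(\left|\frac{1}{n}\sum_{i=1}^{n}Z_i-\mathbb{E}Z_1\right|\geq t\right)\leq 2\exp\!\left(-\frac{nt^{2}}{2L^{2}}\right).
\]
Setting $t=d^{p/2}\epsilon$ and requiring the right-hand side to be at most $\delta$ produces $n\geq (2L^{2})/(d^{p}\epsilon^{2})\log(2/\delta)$, matching the claim.

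The hard part will be the Lipschitz bound. For $p>1$ the map $\eta\mapsto W_p^p(\eta_{\#}^{*}\mu,\eta_{\#}^{*}\nu)$ is not globally Lipschitz on $\mathbb{R}^d$: its local constant scales like $\|\eta\|^{p-1}(M_p(\mu)+M_p(\nu))$, which is unbounded. To rigorously apply Gaussian concentration with the single constant $L$, I would either (i) restrict to the high-probability ball $\{\|\eta\|\leq C\sqrt{d}\,\}$ and absorb its complement using standard Gaussian norm tail bounds, or (ii) truncate and Lipschitz-extend the integrand outside such a ball while controlling the modification in $L^{1}(\gamma_d)$. The prefactor $d^{-p/2}$ from the first step is precisely what absorbs the $\|\eta\|^{p-1}\sim d^{(p-1)/2}$ growth of the local Lipschitz constant on that ball, producing the $d^{p}$ factor in the denominator of the sample-complexity bound.
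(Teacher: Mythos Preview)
Your approach is exactly the paper's: use $p$-homogeneity to pass from $\gamma_d=\mathcal N(0,d^{-1}I_d)$ to $\beta_d=\mathcal N(0,I_d)$, obtaining $\overline{SW}_p^p=d^{p/2}\,\widetilde{SW}_p^p$, and then invoke Gaussian Lipschitz concentration (Theorem~2.26 in \cite{14}) together with Hoeffding for averages of sub-Gaussian variables. The paper's proof is literally a two-line appeal to these two facts and nothing more.

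Where you go beyond the paper is in flagging that for $p>1$ the map $\eta\mapsto W_p^p(\eta_{\#}^{*}\mu,\eta_{\#}^{*}\nu)$ is \emph{not} globally $L$-Lipschitz on $\mathbb R^d$; its local constant scales like $\|\eta\|^{p-1}L$. The paper's proof ignores this entirely and applies Theorem~2.26 as if the Lipschitz constant were the spherical $L$ from Proposition~4. Your proposed remedy (restrict to $\{\|\eta\|\le C\sqrt d\}$ and handle the complement by Gaussian norm tails) is the natural patch; on that ball the Lipschitz constant is $(C\sqrt d)^{p-1}L$, which after the $d^{p/2}$ rescaling actually yields $n\gtrsim L^2/(d\,\epsilon^2)\log(2/\delta)$, a \emph{stronger} bound than the stated $n\ge 2L^2/(d^p\epsilon^2)\log(2/\delta)$ for $p>1$. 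So the corollary as written is true (indeed loose), but the one-line justification with bare constant $L$ is not rigorous for $p>1$; you have correctly identified a gap that the paper's own proof shares, and your sketched fix is the right direction, though it does not reproduce the specific constant $L$ claimed.
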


Through the equivalence between $\widetilde{SW}_p^p$ and ${SW}_p^p$, we can estimate ${SW}_p^p$  from the estimation of $\widetilde{SW}_p^p$.

\begin{corollary}
For fixed $\epsilon$ and $\delta$, when $n \geq \frac{2L^2}{d^p  c_{p,d}^2\epsilon^2} \log{\frac{2}{\delta}}$, we have
\[P(|{SW}_p^p(\mu, \nu)-\frac{1}{d^{p/2} c_{p,d}}\overline{SW}_{p,n}^n | \geq \epsilon) \leq \delta.\]
\end{corollary}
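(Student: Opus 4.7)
The plan is a direct rescaling reduction to Corollary 4 via the identity $\widetilde{SW}_p^p(\mu,\nu)=c_{p,d}\,SW_p^p(\mu,\nu)$ recorded in Remark 3. First I would factor out $c_{p,d}$ from the quantity whose deviation we want to control, writing
\[
SW_p^p(\mu,\nu)-\tfrac{1}{d^{p/2}c_{p,d}}\,\overline{SW}_{p,n}^{\,p}
=\tfrac{1}{c_{p,d}}\bigl(\widetilde{SW}_p^p(\mu,\nu)-\tfrac{1}{d^{p/2}}\,\overline{SW}_{p,n}^{\,p}\bigr),
\]
so that controlling the left-hand deviation by $\epsilon$ is exactly the same event as controlling the right-hand deviation by $c_{p,d}\epsilon$.

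Next I would invoke Corollary 4 with the tolerance $\epsilon$ replaced by $c_{p,d}\epsilon$. That corollary guarantees
\[
P\!\left(\bigl|\widetilde{SW}_p^p(\mu,\nu)-\tfrac{1}{d^{p/2}}\,\overline{SW}_{p,n}^{\,p}\bigr|\ge c_{p,d}\epsilon\right)\le\delta
\]
as soon as $n\ge \tfrac{2L^2}{d^p (c_{p,d}\epsilon)^2}\log\tfrac{2}{\delta}$, which is exactly the hypothesis $n\ge \tfrac{2L^2}{d^p c_{p,d}^2\epsilon^2}\log\tfrac{2}{\delta}$ stated in the corollary. Combining with the factorization above yields the claimed probability bound.

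There is no real obstacle: the argument is a one-line rescaling on top of Corollary 4, and the only thing to check carefully is that $c_{p,d}>0$ (so dividing by it is legitimate) and that the event $\{|A|\ge\epsilon\}$ coincides with $\{|c_{p,d}A|\ge c_{p,d}\epsilon\}$, both of which are immediate from the definition of $c_{p,d}=(2/d)^{1/2}\{\Gamma(d/2+p/2)/\Gamma(d/2)\}^{1/p}$ and the positivity of the Gamma function on $(0,\infty)$.
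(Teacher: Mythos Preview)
Your proposal is correct and follows exactly the route the paper indicates: the result is an immediate rescaling of the concentration bound for $\widetilde{SW}_p^p$ via the identity $\widetilde{SW}_p^p = c_{p,d}\,SW_p^p$ from Remark~3. One small correction: the corollary you invoke is Corollary~5 in the paper's numbering (the one bounding $|\widetilde{SW}_p^p - d^{-p/2}\overline{SW}_{p,n}^p|$), not Corollary~4, which concerns the variant $\widehat{SW}_p$.
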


\section{Central Limit Theorem for Max-Sliced 1-Wasserstein Distance}
\subsection{Limit Distribution for $MSW_1$}
In this section, we characterize the limit distribution of $\sqrt{n}MSW_1(P_n,P)$ via the theory of empirical process. First, we show that $MSW_1(P_n,P)$ can be translated to the supremum of an empirical process through KR duality.

\begin{lemma} 
Given $P \in \mathcal{P}(\mathbb{R}^d)$, we have $MSW_1(P_n, P) = \sup\limits_{f \in \mathcal{F}}(P_n-P)f$, where
\[\mathcal{F}:= \{f :\mathbb{R}^d\to \mathbb{R} \ | \ f(x)=g(\theta^T x), g \in Lip_{1,0}(\mathbb{R}), \theta \in S^{d-1} \}.\]
\end{lemma}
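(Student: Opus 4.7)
The plan is a short definition-chase: unwrap $MSW_1$, apply Kantorovich--Rubinstein duality on each slice, push the test function back to $\mathbb{R}^d$, and then merge the outer maximum and the inner supremum into a single supremum over the product set.

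First I would write, directly from the definition,
\[
MSW_1(P_n, P) \;=\; \max_{\theta \in S^{d-1}} W_1(\theta_{\#}^{*}P_n,\, \theta_{\#}^{*}P),
\]
and, for each fixed $\theta \in S^{d-1}$, invoke KR duality for the one-dimensional $W_1$ to obtain
\[
W_1(\theta_{\#}^{*}P_n,\theta_{\#}^{*}P) \;=\; \sup_{g \in Lip_1(\mathbb{R})} \int_{\mathbb{R}} g\, d(\theta_{\#}^{*}P_n - \theta_{\#}^{*}P).
\]
Since $\theta_{\#}^{*}P_n$ and $\theta_{\#}^{*}P$ are both probability measures, adding a constant to $g$ does not change the integral, so the supremum can be restricted to the subclass $Lip_{1,0}(\mathbb{R})$ of 1-Lipschitz functions vanishing at $0$, exactly as noted in the Background section.

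Next I would apply the defining property of the pushforward: for any bounded measurable (in particular $Lip_{1,0}$) function $g:\mathbb{R}\to\mathbb{R}$,
\[
\int_{\mathbb{R}} g(t)\, d(\theta_{\#}^{*}P_n)(t) \;=\; \int_{\mathbb{R}^d} g(\theta^T x)\, dP_n(x),
\]
and likewise for $P$. Combining the two steps gives
\[
W_1(\theta_{\#}^{*}P_n,\theta_{\#}^{*}P) \;=\; \sup_{g \in Lip_{1,0}(\mathbb{R})} \int_{\mathbb{R}^d} g(\theta^T x)\, d(P_n - P)(x).
\]

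Finally, because the integrand depends jointly on $(g,\theta)$, the iterated supremum equals the supremum over the Cartesian product:
\[
MSW_1(P_n,P) \;=\; \sup_{\theta \in S^{d-1}}\;\sup_{g \in Lip_{1,0}(\mathbb{R})} \int g(\theta^T x)\, d(P_n-P)(x) \;=\; \sup_{f \in \mathcal{F}} (P_n - P)f,
\]
which is the claimed identity. There is no real obstacle here; the only mild caveat is that when $P$ fails to have a finite first projected moment, both sides may be $+\infty$, and one should read the equality in $[0,+\infty]$. The more substantive work---bounding the bracketing entropy of $\mathcal{F}$ to establish the $P$-Donsker property---is deferred to the subsequent results and is not needed for this lemma.
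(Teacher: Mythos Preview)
Your proof is correct and follows essentially the same route as the paper's own argument: unwind the definition of $MSW_1$, apply KR duality on each slice with the restriction to $Lip_{1,0}(\mathbb{R})$, rewrite the integrals via the pushforward identity, and collapse the iterated supremum into a single supremum over $\mathcal{F}$. The paper's proof is terser but substantively identical.
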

By bounding the bracketing entropy integral of $\mathcal{F}$ with respect to $P$ which is defined as
\[J_{[\ ]}(\delta, \mathcal{F}, L_2(P))=\int_{0}^{\delta} \sqrt{\log\mathcal{N}_{[\ ]}(\epsilon, \mathcal{F}, L_2(P))}d\epsilon,\]
we have result as follows. 

\begin{theorem}
When the ($4+\delta$)-th ($\delta>0$) moment of $P$ is finite, $\mathcal{F}$ is $P$-Donsker. Then from continuous mapping theorem, we have
\[\sqrt{n} MSW_1(P_n, P) \xrightarrow{d} \|G_P\|_{\mathcal{F}},\]
where $G_P$ is a tight version of a zero-mean Gaussian process on $\mathcal{F}$ with covariance function $Cov(G_Pf,G_Pg) = P(fg)-(Pf)(Pg)$.
\end{theorem}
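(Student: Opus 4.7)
By Lemma 1, $\sqrt{n}\,MSW_1(P_n,P) = \|\sqrt{n}(P_n-P)\|_{\mathcal{F}}$, so the task reduces to showing that $\mathcal{F}$ is a $P$-Donsker class; once that is done, the limit law $\sqrt{n}\,MSW_1(P_n,P)\xrightarrow{d}\|G_P\|_{\mathcal{F}}$ follows immediately from the continuous mapping theorem applied to the uniform norm on $l^{\infty}(\mathcal{F})$. The plan is to invoke the bracketing CLT (Theorem 2.5.6 of van der Vaart--Wellner): it suffices to exhibit an envelope $F\in L_2(P)$ and to prove $J_{[\,]}(\infty,\mathcal{F},L_2(P))<\infty$. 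Since each $g\in Lip_{1,0}(\mathbb{R})$ satisfies $|g(s)|\le|s|$ and $|\theta^T x|\le\|x\|$ on $S^{d-1}$, the function $F(x)=\|x\|$ is a valid envelope, and it sits in $L_2(P)$ because the $(4+\delta)$-th moment of $P$ is assumed finite.

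\textbf{Two-Level Bracketing.} I would build brackets for $\mathcal{F}$ as a product of (i) a net on $S^{d-1}$ and (ii) brackets for $Lip_{1,0}(\mathbb{R})$ along each projected direction. Fix $\eta,\epsilon>0$ to be calibrated later; choose an $\eta$-net $\{\theta_1,\dots,\theta_N\}\subset S^{d-1}$ with $N\lesssim \eta^{-(d-1)}$, and for each $j$ construct a minimal family of $\epsilon$-brackets $\{[l_{j,k},u_{j,k}]\}$ for $Lip_{1,0}(\mathbb{R})$ in $L_2(P_{\theta_j})$, where $P_{\theta_j}=(\theta_j)^{*}_{\#}P$. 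For a generic $f(x)=g(\theta^T x)\in\mathcal{F}$, pick $\theta_j$ closest to $\theta$ and $k$ with $l_{j,k}\le g\le u_{j,k}$; then
\[
\bigl[\,l_{j,k}(\theta_j^T x)-\eta\|x\|,\ u_{j,k}(\theta_j^T x)+\eta\|x\|\,\bigr]
\]
brackets $f$, since $|g(\theta^T x)-g(\theta_j^T x)|\le\|\theta-\theta_j\|\,\|x\|\le\eta\|x\|$ by the $1$-Lipschitz property of $g$. Its $L_2(P)$-width is at most $\epsilon+2\eta\,M_2(P)$, so taking $\eta\asymp \epsilon/M_2(P)$ keeps it of order $\epsilon$ and yields a total count of $N\cdot \max_j \mathcal{N}_{[\,]}(\epsilon,Lip_{1,0}(\mathbb{R}),L_2(P_{\theta_j}))$.

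\textbf{The Core Difficulty.} The crux is a \emph{uniform-in-$\theta$} bound on $\mathcal{N}_{[\,]}(\epsilon,Lip_{1,0}(\mathbb{R}),L_2(P_\theta))$ strong enough for the entropy integral to converge; I would handle this by truncation. For $g\in Lip_{1,0}(\mathbb{R})$, substitute $\tilde g_R(t):=g\bigl(\max(\min(t,R),-R)\bigr)$ and bound the truncation error in $L_2(P_\theta)$ by $\bigl(\int_{|t|>R}t^{2}\,dP_\theta(t)\bigr)^{1/2}\le R^{-\delta/2}\bigl(\mathbb{E}|\theta^T X|^{2+\delta}\bigr)^{1/2}$. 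The collection $\{\tilde g_R\}$ consists of $1$-Lipschitz functions uniformly bounded by $R$, whose $L_\infty$-bracketing number is classically $\exp(O(R/\epsilon))$. Balancing the two effects by choosing $R\asymp \epsilon^{-2/\delta}$ delivers
\[
\log \mathcal{N}_{[\,]}(\epsilon,Lip_{1,0}(\mathbb{R}),L_2(P_\theta))\lesssim\epsilon^{-1-2/\delta},
\]
uniformly in $\theta$ as soon as $\mathbb{E}\|X\|^{2+\delta}<\infty$. The integral $\int_0\sqrt{\log\mathcal{N}_{[\,]}}\,d\epsilon$ then converges exactly when $\delta>2$, i.e.\ when the $(4+\delta')$-th moment of $P$ is finite for some $\delta'>0$, matching the hypothesis. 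The $\theta$-net contributes only an additive $O(\log(1/\epsilon))$ term to the log-bracket count of $\mathcal{F}$ and is absorbed by the dominant $\epsilon^{-1-2/\delta}$ term; with envelope integrability already checked, the bracketing CLT concludes the proof.
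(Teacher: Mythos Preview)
Your proposal is correct and follows essentially the same route as the paper's own proof. Both arguments build brackets for $\mathcal{F}$ as a product of an $\epsilon$-net on $S^{d-1}$ and $\|\cdot\|_\infty$-brackets for $Lip_{1,0}$ restricted/truncated to an interval of radius $R\asymp \epsilon^{-2/\delta}$, then absorb the tail outside that interval using the $(2+\delta)$-th moment to obtain $\log \mathcal{N}_{[\,]}(\epsilon,\mathcal{F},L_2(P))\lesssim \epsilon^{-1-2/\delta}$ (plus a harmless $O(\log(1/\epsilon))$ from the sphere), yielding a finite bracketing entropy integral precisely when $\delta>2$. The only cosmetic difference is that the paper extends the $L_\infty$-brackets on $[-M,M]$ linearly with slope $\pm 1$ outside the interval (so the extended pair remains a pointwise bracket for every $g\in Lip_{1,0}(\mathbb{R})$), whereas you phrase the same step via the truncation $\tilde g_R$ and a separate $L_2$ tail bound; you should make explicit that your $L_\infty$-brackets for $\{\tilde g_R\}$ are enlarged by $\pm(|t|-R)_+$ to become genuine \emph{pointwise} brackets for the untruncated $g$, but once that is said the two constructions are interchangeable.
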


The technique in the proof of Theorem 2 can also be applied to other integral probability metrics (IPM) defined as 
\[\gamma_{\mathcal{F}}(P,Q):= \sup_{f \in \mathcal{F}} |\int f dP-\int fdQ|,\] 
and the max-Sliced version of the IPM is defined as
\[\max-\gamma_{\mathcal{F}}(P,Q):= \max_{\theta \in S^{d-1}} \sup_{f \in \mathcal{F}} |\int f d {\theta}_{\#}^{*}P-\int fd{\theta}_{\#}^{*}Q|.\]

Similar to Lemma 1, we have
\[ \max-\gamma_{\mathcal{F}}(P,Q) = \sup_{f \in \widehat{\mathcal{F}}} | \int f dP-\int f dQ|,\]
where $\widehat{\mathcal{F}}:= \{ f :\mathbb{R}^d \to \mathbb{R} \ | \  f(x)=g(\theta^T x), g \in \mathcal{F}, \theta \in S^{d-1} \}$. \\

For function class satisfied the following assumptions :
\begin{assumption} 
$\mathcal{F} \subset Lip_L(\mathbb{R})$, for some positive constant $L$.
\end{assumption}
\begin{assumption}
For any positive constant $M \in \mathbb{R}$, the bracketing entropy of $\mathcal{F}| _{[0, M]}$ satisfies that $\log\mathcal{N}_{[\ ]}(\epsilon, \mathcal{F} |_{[0, M]}, \|\cdot \|_{\infty}) = O(\frac{M^{\alpha}}{{\epsilon}^{\beta}})$ for some  constants $\alpha > 0$ and $0< \beta < 2$, where $\mathcal{F} |_{[0, M]}:=\{f |_{[0, M]} \ | \ f \in \mathcal{F} \}$.  
\end{assumption}

Then we have the following result for IPMs.
\begin{corollary} 
If the $(2+\frac{2\alpha}{2-\beta})$-th moment of $P$ is finite, $\widehat{\mathcal{F}}$ is $P$-Donsker and the continuous mapping theorem implies 
\[\sqrt{n}\max-\gamma_{\mathcal{F}}(P_n,P) \xrightarrow{d} \|G_P\|_{\widehat{\mathcal{F}}},\]
where $G_P$ is a tight version of a zero-mean Gaussian process on $\widehat{\mathcal{F}}$ with covariance function $Cov(G_Pf,G_Pg) = P(fg)-(Pf)(Pg)$.
\end{corollary}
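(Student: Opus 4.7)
The plan is to follow the same three-step route as the proof of Theorem~2: (i) rewrite $\max\text{-}\gamma_{\mathcal{F}}(P_n,P)$ as an empirical supremum over the enlarged class $\widehat{\mathcal{F}}$; (ii) verify, through a bracketing entropy integral bound, that $\widehat{\mathcal{F}}$ is $P$-Donsker; and (iii) read off the weak limit by the continuous mapping theorem. Step~(i) is exactly the identity stated just before Corollary~4. Since $(P_n-P)$ annihilates constants, I will freely replace each $g\in\mathcal{F}$ by $g-g(0)$ when applying the empirical process, so Assumption~1 yields $|g(y)|\le L|y|$ and hence every $f(x)=g(\theta^{T}x)\in\widehat{\mathcal{F}}$ has envelope $|f(x)|\le L\|x\|$, which I will use both for bracketing and for the tail estimate.

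For step~(ii) I will mimic the ``truncate the ball, cover the sphere'' construction underlying Theorem~2, plugging in Assumption~2 in place of the explicit bracketing bound for $Lip_{1,0}(\mathbb{R})$. Fix a truncation radius $M>0$ and an angular scale $\delta>0$. On $B_M=\{\|x\|\le M\}$ one has $|\theta^{T}x|\le M$, so Assumption~2 (applied to $[-M,M]$, which only costs a factor of two) furnishes an $L_\infty$ bracketing of $\mathcal{F}|_{[-M,M]}$ at scale $\eta$ with log-cardinality $O(M^{\alpha}/\eta^{\beta})$. Taking a $\delta$-net $\{\theta_k\}$ of $S^{d-1}$ of size $O(\delta^{-(d-1)})$, I form the lifted brackets
\[
f^{L}_{j,k}(x)=g^{L}_{j}(\theta_k^{T}x)-L\delta M,\qquad f^{U}_{j,k}(x)=g^{U}_{j}(\theta_k^{T}x)+L\delta M
\]
on $B_M$. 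Assumption~1 shows these cover $\widehat{\mathcal{F}}|_{B_M}$ with $L_\infty$-width $\eta+2L\delta M$; choosing $\delta\asymp \eta/(LM)$ produces an $L_\infty$ cover of $\widehat{\mathcal{F}}|_{B_M}$ at scale $O(\eta)$ with log-cardinality $O(M^{\alpha}/\eta^{\beta})+O(d\log(LM/\eta))$.

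To turn this into an $L_2(P)$ bracketing of $\widehat{\mathcal{F}}$ I pad each bracket by $\pm L\|x\|\mathbf{1}_{\|x\|>M}$, so the squared $L_2(P)$-width picks up a tail bounded by $4L^{2}\,\mathbb{E}\|X\|^{2+r}/M^{r}$ through Markov, with $r=2\alpha/(2-\beta)$ matching the stated moment. Setting $M\asymp \eta^{-2/r}$ makes this tail $O(\eta^{2})$, and I obtain
\[
\log \mathcal{N}_{[\ ]}\bigl(C\eta,\widehat{\mathcal{F}},L_2(P)\bigr)=O\bigl(\eta^{-(2\alpha/r+\beta)}\bigr)+O(\log(1/\eta)).
\]
By the defining choice of $r$ the exponent $2\alpha/r+\beta$ is exactly $2$ at the critical moment and strictly below $2$ as soon as the moment is any amount stronger, so $J_{[\ ]}(\infty,\widehat{\mathcal{F}},L_2(P))<\infty$, and $\widehat{\mathcal{F}}$ is $P$-Donsker by the bracketing CLT; the continuous mapping theorem applied to the supremum functional $z\mapsto\|z\|_{\widehat{\mathcal{F}}}$ on $\ell^{\infty}(\widehat{\mathcal{F}})$ then delivers the stated weak limit. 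The main obstacle will be the simultaneous three-way balance among $M$, $\delta$, and $\eta$: the angular cover cost $\delta^{-(d-1)}$, the Lipschitz-in-$\theta$ error $L\delta M$, and the polynomial tail $\mathbb{E}[\|X\|^{2}\mathbf{1}_{\|X\|>M}]$ must be reconciled so that the resulting entropy exponent lands exactly at $\beta+2\alpha/r$; any looser trade-off inflates the required moment strictly beyond $2+2\alpha/(2-\beta)$.
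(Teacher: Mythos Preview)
Your proposal is correct and follows essentially the same route as the paper: bracket $\mathcal{F}$ on $[-M,M]$ via Assumption~2, cover $S^{d-1}$, pad to absorb the angular error and the tail beyond $M$, then choose $M=\epsilon^{-2/\delta}$ so the entropy exponent becomes $2\alpha/\delta+\beta$, which is $<2$ precisely when $\delta>2\alpha/(2-\beta)$. The only differences are cosmetic---you decouple the angular scale $\delta\asymp\eta/(LM)$ and use an indicator tail $\pm L\|x\|\mathbf{1}_{\|x\|>M}$, whereas the paper ties the sphere cover to the same $\epsilon$ and extends the bracket endpoints linearly with slope $\pm L$ outside $[-M,M]$; both yield the same bound and the same (strict) moment threshold you correctly flag at the end.
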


\section{Statistical analysis for $MSW_1$}
\subsection{Upper bound in empirical estimation}
It's obvious that $SW_1$ enjoys dimension-free sample complexity, i.e. the convergence rate under empirical estimation is independent of dimensionality. But it's not clear for $MSW_1$. \cite{66} has showed the upper bound when the probability measure satisfies the transportation cost inequality. Also \cite{52} has explored the convergence rate for projection based Wasserstein distance under strong assumptions like satisfying the Bernstein tail condition or projection Poincare inequality and the results in \cite{52} imply that $MSW_1$ also has sample complexity independent of dimensionality. In the next, we use chaining method to obtain the convergence rate of $MSW_1$ under different conditions. For $\alpha>0$, let $\|\xi\|_{\psi_{\alpha}}:= \inf\{C>0: \mathbb{E}[e^{\|\xi\|/C}] \leq 2\}$ be the Orlicz $\psi_{\alpha}$-(quasi)norm for real-valued random variable $\xi$.

\begin{theorem}
Suppose that one of the conditions holds: (1) $\mu \in \mathcal{P}_p(\mathbb{R}^d)$ for $1<p<2$; (2) $\mu \in \mathcal{P}_p(\mathbb{R}^d)$ for $2\leq p \leq 4$; (3) $\mu \in \mathcal{P}_p(\mathbb{R}^d)$ for $p>4$, then
\begin{equation*}
\mathbb{E}[MSW_1(\mu_n, \mu)]=\left\{
\begin{aligned}
&\mathcal{O}(n^{-\frac{p-1}{2p}}), &under \  (1), \\
&\mathcal{O}(n^{-\frac{p-1}{2p-1}}), &under \  (2),\\
&\mathcal{O}(n^{-\frac{1}{2}}), &under\  (3).\\
\end{aligned}
\right.
\end{equation*}
\end{theorem}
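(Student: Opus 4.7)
The plan is to translate $MSW_1(\mu_n, \mu)$ into the supremum of an empirical process via Lemma~1, and then handle the resulting process by combining a level-$R$ truncation of the function class with a chaining argument driven by its bracketing entropy. By Lemma~1, $MSW_1(\mu_n, \mu) = \sup_{f \in \mathcal{F}} (\mu_n - \mu) f$ where every $f = g \circ \theta^*$ for some $g \in Lip_{1,0}(\mathbb{R})$ and $\theta \in S^{d-1}$, so $|f(x)| \le \|x\|$ and $\mathcal{F}$ is envelope-controlled by $F(x) = \|x\|$.

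First I would introduce a truncation parameter $R > 0$ and define the clip $\pi_R(g) := (g \wedge R) \vee (-R)$, which still lies in $Lip_{1,0}(\mathbb{R})$; write $\mathcal{F}_R := \{\pi_R(g) \circ \theta^* : g \in Lip_{1,0}(\mathbb{R}),\ \theta \in S^{d-1}\}$, a class uniformly bounded by $R$. The tail of the decomposition is controlled via $|f(x) - \pi_R(g)(\theta^T x)| \le (\|x\| - R)_+$, so a Markov-type estimate yields $\mathbb{E}\sup_{f \in \mathcal{F}} |(\mu_n - \mu)(f - \pi_R(f))| \le 2\,\mathbb{E}[(\|X\| - R)_+] \lesssim M_p(\mu)^p R^{1-p}$ under $\mu \in \mathcal{P}_p(\mathbb{R}^d)$.

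The bulk term requires an $L_2(\mu)$-bracketing estimate for $\mathcal{F}_R$. For each fixed $\theta$, the subclass $\{\pi_R(g) \circ \theta^* : g \in Lip_{1,0}(\mathbb{R})\}$ inherits the classical $L_\infty$-bracketing of $1$-Lipschitz functions on $[-R,R]$ vanishing at $0$, which has log-cardinality $O(R/\epsilon)$. The $\theta$-dependence is absorbed by covering $S^{d-1}$ with a $\delta$-net and using $|g(\theta^T x) - g(\theta_i^T x)| \le \|\theta - \theta_i\|\|x\|$; tuning $\delta$ against $\epsilon$ gives $\log \mathcal{N}_{[\,]}(\epsilon, \mathcal{F}_R, L_2(\mu)) \lesssim R/\epsilon$, up to lower-order dimensional terms. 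Dudley's chaining with the uniform bound $R$ then yields $\mathbb{E}\sup_{\mathcal{F}_R}|(\mu_n - \mu)f| \lesssim R/\sqrt{n}$ in general, while for $p \ge 2$ the envelope $\|x\| \wedge R$ of $\mathcal{F}_R$ has $L_2(\mu)$-norm bounded uniformly in $R$, so Ossiander's bracketing inequality sharpens the bulk to $\lesssim \sqrt{R}/\sqrt{n}$.

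Balancing bulk against the tail $R^{1-p}$ produces the three stated rates: $R = n^{1/(2p)}$ gives $n^{-(p-1)/(2p)}$ in case~(1); $R = n^{1/(2p-1)}$ gives $n^{-(p-1)/(2p-1)}$ in case~(2); in case~(3), the condition $p > 4$ supplies $\delta := p - 4 > 0$, so that Theorem~2 certifies $\mathcal{F}$ is $P$-Donsker, the bracketing integral at the full envelope $\|x\|$ is finite without any truncation, and Ossiander's bound delivers $O(n^{-1/2})$ directly. The main obstacle is the bracketing entropy estimate for $\mathcal{F}_R$: one must stitch together the $L_\infty$-brackets for $\pi_R(g)$ with the $\delta$-net on $S^{d-1}$ so that the Lipschitz coupling $\|\theta - \theta_i\|\|x\|$ is absorbed into the $L_2(\mu)$-bracket width at the correct scale, and one must track how the envelope of the truncated class behaves in $L_2(\mu)$ in each moment regime so that the appropriate chaining inequality is applied and the final exponent depends only on $p$.
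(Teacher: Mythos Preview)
Your overall strategy---truncate, bound the entropy of the truncated class, chain, and balance the bulk against the moment tail---is exactly the paper's approach, and your balancing computations in all three cases are correct. However, your choice to truncate the \emph{range} via the clip $\pi_R$ creates a genuine gap in the entropy step. The function $\pi_R(g)$ still lives on all of $\mathbb{R}$; on $\{|y|>R\}$ it can be any $1$-Lipschitz function with values in $[-R,R]$, and that class is not $L_\infty$-totally bounded (one can place infinitely many pairwise $R$-separated triangle bumps along $[R,\infty)$). So $\{\pi_R(g):g\in Lip_{1,0}(\mathbb{R})\}$ does \emph{not} inherit the $O(R/\epsilon)$ bracketing of $Lip_{1,0}([-R,R])$ in $L_\infty$. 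In addition, your $\theta$-net coupling $|g(\theta^Tx)-g(\theta_i^Tx)|\le\|\theta-\theta_i\|\,\|x\|$ carries the unbounded factor $\|x\|$; absorbing it into an $L_2(\mu)$ bracket width requires $\mathbb{E}\|X\|^2<\infty$, which is unavailable in case~(1).

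The paper sidesteps both issues by truncating the \emph{domain} instead: it works with $\mathcal{F}_M:=\{f(x)I_{\|x\|\le M}:f\in\mathcal{F}\}$. Every function is then supported in the ball of radius $M$, so only $g|_{[-M,M]}$ matters, and the $\theta$-coupling becomes $\|\theta-\theta_i\|\,M$, which is bounded. This yields the clean $L_\infty$ covering bound $\log\mathcal{N}(\mathcal{F}_M,\|\cdot\|_\infty,\epsilon)\lesssim M/\epsilon$, after which the rest of your outline---Dudley for $p<2$ giving $I_1\lesssim M/\sqrt{n}$, the sharper chaining with $\sigma_n\le(\mathbb{E}\|X\|^2)^{1/2}$ for $2\le p\le 4$ giving $I_1\lesssim\sqrt{M/n}$, and the finite bracketing integral for $p>4$---goes through as written. (For case~(3) the paper actually invokes an external maximal inequality rather than its own Theorem~2, but your route via Theorem~2 and Ossiander's bound is equally valid.)
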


\begin{remark}
Note that in \cite{66} the convergence rate is $\mathcal{O}(\sqrt{\frac{\log n}{n}})$ under the condition that $\mu$ satisfies the $T_1(\sigma^2)$ inequality which is equivalent to that $\|\|X\|\|_{\psi_{2}}<\infty$. In condition (3), our condition is weaker and upper bound is tigher. Notice that under the condition (3), we can conclude from Proposition 3 that the upper bound is also sharp.

\end{remark}

\subsection{Concentration Inequalities}
In this section, we consider the concentration inequalities for $MSW_1$ and $SW_1$ under different conditions which is similar with Proposition 1 in \cite{5}. Trough treating $MSW_1$ between the empirical probability and the true probability as the supremum of an empirical process, the results follow directly from \cite{26}.

\begin{proposition} 
Let $X \sim P$ with $supp(P) = \mathcal{X}$ and suppose one of the following conditions holds :(i) $\mathcal{X}$ is compact; (ii) $\| \|X\| \|_{\psi_\alpha} < \infty $, for some $ \alpha \in (0, 1]$. Then for all $\eta \in (0, 1)$, there exists constant $C:=C_{\eta}$ such that for all $t >0$:
\[P(MSW_1(\mu_n, \mu) \geq (1+\eta)\mathbb{E}[ MSW_1(\mu_n, \mu)+t)]) \leq \exp(-\frac{nt^2}{2(1+\delta)\mathbb{E}\|x\|^2}) + \alpha(n,t),\]
and
\[P(MSW_1(\mu_n, \mu) \leq (1-\eta)\mathbb{E}[ MSW_1(\mu_n, \mu)-t)] )\leq \exp(-\frac{nt^2}{2(1+\delta)\mathbb{E}\|x\|^2}) + \alpha(n,t),\]
where
\begin{equation*}
\alpha(n,t) = \left \{
\begin{aligned}
&\exp\left(-\frac{nt}{C diam(\mathcal{X})} \right), & under \ (i).\\
&3\exp 
\left(-  \left(\frac{n t}{C\| \|X\|+\mathbb{E}\|X\|\|_{\psi_\alpha} }\right)^{\alpha}   \right), &under \ (ii) .
\end{aligned}
\right.
\end{equation*}
\end{proposition}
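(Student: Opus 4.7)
The plan is to combine Lemma 1, which rewrites $MSW_1(\mu_n,\mu)$ as the supremum $\sup_{f \in \mathcal{F}}(P_n-P)f$ of an empirical process indexed by $\mathcal{F} = \{g(\theta^T\cdot) : g \in Lip_{1,0}(\mathbb{R}),\ \theta \in S^{d-1}\}$, with the Talagrand--Adamczak concentration machinery catalogued in \cite{26}. The two ingredients required by those inequalities are (a) an envelope function for $\mathcal{F}$ together with control of its tail, and (b) a bound on the weak variance $\sigma_*^2 := \sup_{f\in\mathcal{F}} \mathrm{Var}(f(X))$.

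First I would establish the envelope. For $f(x)=g(\theta^T x)$ with $g\in Lip_{1,0}(\mathbb{R})$ and $\theta\in S^{d-1}$, the vanishing at $0$ and the 1-Lipschitz property yield $|f(x)|=|g(\theta^T x)-g(0)|\le |\theta^T x|\le \|x\|$ by Cauchy--Schwarz, so $F(x):=\|x\|$ is an envelope for $\mathcal{F}$. Consequently $\sigma_*^2 \le \sup_{f\in\mathcal{F}}\mathbb{E}[f(X)^2] \le \mathbb{E}\|X\|^2$, which produces the Gaussian factor $\exp\bigl(-nt^2/(2(1+\eta)\mathbb{E}\|X\|^2)\bigr)$ after the usual $(1+\eta)$-slack trick $(a+b)^2\le(1+\eta)a^2+(1+\eta^{-1})b^2$ that lets one absorb the centering term into $\mathbb{E}[MSW_1(\mu_n,\mu)]$.

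Next I would split on the two assumptions. Under (i), $\mathrm{supp}(P)$ compact forces $F\le \mathrm{diam}(\mathcal{X})$ almost surely, so the bounded-difference Talagrand inequality from \cite{26} applies directly and yields the Bernstein-type remainder $\exp(-nt/(C\,\mathrm{diam}(\mathcal{X})))$ for both the upper and lower deviation. Under (ii), the envelope has finite Orlicz norm $\|\,\|X\|\,\|_{\psi_\alpha}$, and I would invoke Adamczak's tail inequality for suprema of unbounded empirical processes (stated as the $\psi_\alpha$ version in \cite{26}), which outputs the mixed remainder $3\exp\bigl(-(nt/(C\|\,\|X\|+\mathbb{E}\|X\|\,\|_{\psi_\alpha}))^\alpha\bigr)$. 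The lower-tail inequality requires only the same envelope/variance inputs, since both versions in \cite{26} are two-sided.

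The main obstacle I anticipate is bookkeeping rather than conceptual: one must verify that the envelope $\|x\|$ is the right random variable appearing in the Orlicz-norm term (so that $\|F(X)\|_{\psi_\alpha}=\|\,\|X\|\,\|_{\psi_\alpha}$, which holds trivially here) and that the constant $C_\eta$ from \cite{26} absorbs the universal numerical constants together with the $\eta$-dependence coming from the slack trick. No additional entropy bound is needed because Talagrand's inequality and its unbounded extension control the supremum in terms of its expectation, envelope and weak variance alone.
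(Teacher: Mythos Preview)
Your proposal is correct and follows essentially the same route as the paper: the paper's own proof simply says to view $MSW_1(\mu_n,\mu)$ as the supremum of an empirical process via Lemma~1 and then apply the Talagrand/Adamczak inequalities recorded in \cite{26}, omitting all details. Your write-up supplies exactly those details (envelope $F(x)=\|x\|$, weak variance bound $\sigma_*^2\le\mathbb{E}\|X\|^2$, and the compact versus $\psi_\alpha$ case split), so there is nothing to add.
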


If $\mu$ is induced by a $\sigma^2$-Subgaussian random vector $X$ with value in $\mathbb{R}^d$, i.e. for all $\alpha \in \mathbb{R}^d$, 
\[ \mathbb{E}[\exp(\alpha^T (X-\mathbb{E}X))] \leq \exp(\|\alpha\|^2\sigma^2/2 )\]
then we have 
\[P(|MSW_1(\mu_n, \mu)-\mathbb{E}MSW_1(\mu_n, \mu)|\geq t) \leq 2e^{-\frac{nt^2}{32d\sigma^2}},\]

and 

\[P(|SW_1(\mu_n, \mu) -\mathbb{E}SW_1(\mu_n,\mu)|\geq t) \leq 2e^{-\frac{nt^2}{4\sigma^2}}.\]

\subsection{Two-Sample Settings}
Let $X_1, \cdots, X_m$ and $Y_1, \cdots, Y_n$ be independent random samples from distributions $P$ and $Q$ respectively. It's fundamental in statistics to test the null hypothesis $H_0: P=Q$ versus the alternative $H_1:P \neq Q$. By considering $MSW_1$, we define the statistic
\[W_{m,n}:= \sqrt{\frac{mn}{N}}MSW_1(P_m,Q_n)= \sqrt{\frac{mn}{N}}\|P_m-Q_n\|_{\mathcal{F}},\]
where $\mathcal{F}$ is the function class in Lemma 1 and $N=m+n$. Following by chapter 3.7 in \cite{15}, let  
$(Z_{N1}, \cdots, Z_{NN})=(X_1, \cdots, X_m, Y_1, \cdots, Y_n)$ and $\lambda=m/N$, consider the pooled empirical measure 
\[H_N = \frac{1}{N} \sum\limits_{i=1}^{N}\delta_{Z_{Ni}}=\lambda_N P_m +(1-\lambda_N)Q_n.\]
Let $Z_1, \cdots, Z_N$ be a bootstrap sample from $H_N$ and define
\[P_m^B:= \frac{1}{m}\sum\limits_{i=1}^m \delta_{Z_i}, \ Q_n^B:= \frac{1}{n}\sum\limits_{i=m+1}^N \delta_{Z_i}.\]
Let $W_{m,n}^B=\sqrt{\frac{mn}{N}}MSW_1(P_m^B,Q_n^B)$, we have the following corollary.

\begin{corollary}
Let $P,Q \in \mathcal{P}(\mathbb{R}^d)$. Assume that the $(4+\delta)$-th moment of $P$ and $Q$ are finite. Then with the same notation as above, we have
	 
(i) If $P=Q$, then $W_{m,m}\xrightarrow{d} G_P$, and $W_{m,n} \xrightarrow{P} \infty$, otherwise. 

(ii) $W_{m,n}^B \xrightarrow{d} G_H$ given almost every sequence $X_1, \cdots, X_m, Y_1, \cdots, Y_n$ where $G_H$ is a tight Brownian bridge process corresponding the measure $H=\lambda P + (1-\lambda) Q$. 
\end{corollary}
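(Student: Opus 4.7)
The plan is to recast $W_{m,n}$ as a supremum of a two-sample empirical process indexed by the class $\mathcal{F}$ of Lemma 1 and then apply the two-sample empirical process theory of Chapter 3.7 in \cite{15}, exploiting that $\mathcal{F}$ has been shown to be $P$-Donsker (and, by the same moment argument, $Q$- and $H$-Donsker) via Theorem 2. By Lemma 1, $MSW_1(P_m,Q_n)=\|P_m-Q_n\|_{\mathcal{F}}$, so $W_{m,n}=\sqrt{mn/N}\,\|P_m-Q_n\|_{\mathcal{F}}$ and the problem reduces to studying the process $\sqrt{mn/N}(P_m-Q_n)$ in $l^{\infty}(\mathcal{F})$.

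For part (i) under the null $P=Q$, I would use the algebraic identity
\[\sqrt{\tfrac{mn}{N}}(P_m-Q_n)=\sqrt{1-\lambda_N}\,\sqrt{m}(P_m-P)-\sqrt{\lambda_N}\,\sqrt{n}(Q_n-P).\]
Since $\mathcal{F}$ is $P$-Donsker and the two samples are independent, the right-hand side converges weakly in $l^{\infty}(\mathcal{F})$ to $\sqrt{1-\lambda}\,G_P-\sqrt{\lambda}\,G_P'$, where $G_P,G_P'$ are independent copies of the Gaussian limit; since the squared coefficients sum to $1$, this sum has the same law as a single copy of $G_P$. The continuous mapping theorem applied to $\|\cdot\|_{\mathcal{F}}$ then yields $W_{m,m}\xrightarrow{d}\|G_P\|_{\mathcal{F}}$. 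Under the alternative $P\ne Q$, the Donsker property forces Glivenko--Cantelli, so $\|P_m-P\|_{\mathcal{F}}$ and $\|Q_n-Q\|_{\mathcal{F}}$ vanish in probability; the triangle inequality then gives $MSW_1(P_m,Q_n)\to MSW_1(P,Q)>0$, and multiplying by $\sqrt{mn/N}\to\infty$ delivers $W_{m,n}\xrightarrow{P}\infty$.

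For part (ii), I would invoke the exchangeable bootstrap Donsker theorem in its two-sample form (Theorem 3.7.7 in \cite{15}). The pooled measure $H_N=\lambda_N P_m+(1-\lambda_N)Q_n$ converges almost surely, uniformly on $\mathcal{F}$, to $H=\lambda P+(1-\lambda)Q$, and because the $(4+\delta)$-th moment assumption transfers from $P$ and $Q$ to the convex combination $H$, Theorem 2 gives that $\mathcal{F}$ is $H$-Donsker. The analogous decomposition
\[\sqrt{\tfrac{mn}{N}}(P_m^B-Q_n^B)=\sqrt{1-\lambda_N}\,\sqrt{m}(P_m^B-H_N)-\sqrt{\lambda_N}\,\sqrt{n}(Q_n^B-H_N),\]
combined with the conditional bootstrap CLT applied to each term, shows that given almost every realization of the observations the process converges conditionally to $G_H$ in $l^{\infty}(\mathcal{F})$; a further application of the continuous mapping theorem yields $W_{m,n}^B\xrightarrow{d}\|G_H\|_{\mathcal{F}}$ conditionally on the data. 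The main obstacle is really only bookkeeping: one has to verify that the two-sample bootstrap CLT is applicable in the present non-identically-distributed regime and that the $H$-Donsker property genuinely follows from the $(4+\delta)$-th moment bounds on $P$ and $Q$. Both reduce to standard facts about convex combinations of probability measures together with Theorem 2, so no additional analytic estimates beyond those already in hand are required.
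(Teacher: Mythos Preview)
Your proposal is correct and follows essentially the same route as the paper: the paper's proof simply cites Theorem 2 to obtain the Donsker property of $\mathcal{F}$ under $P$, $Q$ (and hence $H$), then appeals to Chapter 3.7 and Theorem 3.7.7 of \cite{15} for (i) and (ii) respectively. You have spelled out the two-sample decomposition and the continuous mapping step that are implicit in that citation, but the underlying argument is identical.
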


For $P \neq Q$, we consider the following statistic.
\[\overline{W}_{m,n}:= \sqrt{\frac{mn}{m+n}} \left( MSW_1(P_m, Q_n)-MSW_1(P,Q) \right).\]

Following from a direct application of Theorem 6.1 in \cite{28}, we derive limit distribution for $
\overline{W}_{m,n}$ which is similar to Corollary 4 in \cite{5}.

\begin{corollary}
Under the assumptions of Corollary 10, let
\[d(f,g) := \sqrt{\mathbb{E}_P(f-g)^2}+\sqrt{\mathbb{E}_Q(f-g)^2},f,g \in \mathcal{F}.\]
Then $(\mathcal{F},d)$ is a totally bounded pseudometric space. Moreover, we have 
\[\overline{W_{m,n}} \to \sup_{\overline{M}^+(D,d)} G,\]
where $G:= \sqrt{1-\lambda}G_P-\sqrt{\lambda}G_Q$,
\[\widetilde{M}^+(D,d) := \{f\in (\overline{\mathcal{F}}, d): \mathbb{E}_P(f)-\mathbb{E}_Q(f) = MSW_1(P,Q)\},\]
and $\overline{\mathcal{F}}$ is the completion of $\mathcal{F}$ with respect to $d$.
\end{corollary}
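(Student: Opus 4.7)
The plan is to recognize $\overline{W}_{m,n}$ as the image of the two-sample empirical process under the supremum functional $\phi(z):=\sup_{f\in\mathcal{F}}z(f)$ and then apply the functional delta method for Hadamard directionally differentiable maps, using Theorem 6.1 in \cite{28} for the derivative of $\phi$. Since $g\in Lip_{1,0}(\mathbb{R})$ implies $-g\in Lip_{1,0}(\mathbb{R})$, the class $\mathcal{F}$ of Lemma 1 is symmetric, so $MSW_1(P,Q)=\sup_{f\in\mathcal{F}}(Pf-Qf)=\phi(P-Q)$ and similarly $MSW_1(P_m,Q_n)=\phi(P_m-Q_n)$; hence
\[
\overline{W}_{m,n} \;=\; \sqrt{\tfrac{mn}{N}}\bigl(\phi(P_m-Q_n)-\phi(P-Q)\bigr).
\]

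First I would check the total boundedness and the Donsker input. Under the $(4+\delta)$-th moment hypothesis, Theorem 2 gives that $\mathcal{F}$ is both $P$-Donsker and $Q$-Donsker. Every Donsker class is totally bounded in its intrinsic $L^2$ pseudometric, so $(\mathcal{F},d_P)$ with $d_P(f,g)=\sqrt{\mathbb{E}_P(f-g)^2}$ and $(\mathcal{F},d_Q)$ with the analogous $d_Q$ are totally bounded; their sum $d=d_P+d_Q$ inherits this property, establishing the first assertion of the corollary. The two-sample Donsker theorem (Section 3.7 of \cite{15}) then yields, under $m/N\to\lambda\in(0,1)$, the weak convergence
\[
\sqrt{\tfrac{mn}{N}}\bigl((P_m-Q_n)-(P-Q)\bigr)\;\xrightarrow{d}\; G \;=\; \sqrt{1-\lambda}\,G_P-\sqrt{\lambda}\,G_Q
\]
in $\ell^{\infty}(\mathcal{F})$, with sample paths in the space $C_u(\mathcal{F},d)$ of bounded $d$-uniformly continuous functions almost surely.

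Next, Theorem 6.1 in \cite{28} asserts that on a totally bounded pseudometric space $(\mathcal{F},d)$ the supremum map $\phi:\ell^{\infty}(\mathcal{F})\to\mathbb{R}$ is Hadamard directionally differentiable at $P-Q$ tangentially to $C_u(\mathcal{F},d)$, with derivative
\[
\phi'_{P-Q}(h) \;=\; \sup_{f\in\widetilde{M}^+(D,d)} h(f).
\]
Applying the Hadamard directional delta method to this derivative and the weakly convergent input above then yields $\overline{W}_{m,n}\xrightarrow{d}\sup_{f\in\widetilde{M}^+(D,d)} G(f)$, which is the stated limit.

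The main obstacle is the argmax description of the derivative: one must verify that $f\mapsto(P-Q)f$ extends uniquely and continuously from $(\mathcal{F},d)$ to the completion $(\overline{\mathcal{F}},d)$ (immediate from $|Pf-Pg|\le d_P(f,g)$ and the corresponding bound for $Q$), that the extended functional attains its supremum on $\overline{\mathcal{F}}$ (which follows from the total boundedness proved in the first step together with upper semi-continuity), and that the attaining set is precisely $\widetilde{M}^+(D,d)$ as defined in the statement. Once these book-keeping points are in place, Theorem 6.1 in \cite{28} applies directly, and the $(4+\delta)$-th moment hypothesis enters only through its role in Theorem 2.
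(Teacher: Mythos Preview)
Your proposal is correct and matches the paper's approach exactly: the paper simply states that the result follows from a direct application of Theorem 6.1 in \cite{28} (as a parallel to Corollary 4 in \cite{5}) and gives no further proof, so your write-up is in fact a fleshed-out version of precisely the argument the authors had in mind. The ingredients you identify---symmetry of $\mathcal{F}$, the $P$- and $Q$-Donsker property from Theorem 2, total boundedness under $d=d_P+d_Q$, the two-sample empirical process limit from Section 3.7 of \cite{15}, and the Hadamard directional derivative of the sup functional from \cite{28}---are exactly what is needed.
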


\section{Application}
\subsection{Two-Sample Testing}
The Problem of two-sample testing is a fundamental topic in statistics. Specifically, for $\mu, \nu \in \mathcal{P}(\mathbb{R}^d)$, let $X_1,\cdots, X_m \sim \mu$ and $Y_1, \cdots, Y_n\sim \nu$ be independent samples from $\mu$ and $\nu$ respectively. Based on these samples, we wish to know whether $\mu=\nu$, that is, decide whether to accept the null hypothesis $H_0 \ : \ \mu=\nu$ or alternative hypothesis $H_1 \ : \ \mu \neq \nu$. In this section, we consider the non-parametric two-sample testing and utilize $SW_1$ and $MSW_1$ to construct test statistic. 

First, we state the definition of asymptotically level. For a sequence of two-sample tests $T_{m,n}$ defined as
\begin{equation*}
T_{m,n}(X_1,\cdots, X_m, Y_1, \cdots, Y_n) =\left\{
\begin{aligned}
H_1&, &if \ D_{m,n}>c_{m,n}, \\
H_0&,  &otherwise,
\end{aligned}
\right.
\end{equation*} 
where $D_{m,n}$ is a sequence of statistics based on samples $(X_1,\cdots, X_m, Y_1, \cdots, Y_n)$. We say $T_{m,n}$ is asymptotically level $\alpha\in [0,1]$ if $\lim\sup_{m,n\to \infty}P(T_{m,n}>c_{m,n}) \leq \alpha$ under $H_0$ and $T_{m,n}$ is consistent at asymptotic level $\alpha$ if it is asymptotic level $\alpha$ and $\lim\inf_{n \to \infty}P(T_{m,n}>c_{m,n})=1$ under $H_1$. 

\begin{proposition}
Under the setting of Section 6.3, consider the problem of testing $H_0 \ : \ \mu=\nu$ versus  $H_1 \ : \ \mu \neq \nu$. Let 
\begin{equation*}
D_{m,n}=\left\{
\begin{aligned}
&\overline{W}_{m,n}= \sqrt{\frac{mn}{N}} \|P_m-Q_n\|_{L^1(S^{d-1}\times \mathbb{R})}, &(1), \\
&W_{m,n}=\sqrt{\frac{mn}{N}}  \|P_m-Q_n\|_{\mathcal{F}}, &(2),
\end{aligned}
\right.
\end{equation*}	
and $T_{m,n}$ is defined as above, where under (1) $c_{m,n}$ is the $\alpha$-quantile of $\overline{W}_{m,n}^B:=\sqrt{\frac{mn}{N}} \|P_m^B-Q_n^B \|_{L^1(S^{d-1}\times \mathbb{R})}$
and under (2) $c_{m,n}$ is the $\alpha$-quantile of $W_{m,n}^B$. Then, this test is consistent at asymptotic level $\alpha$ if under (1) $\Lambda_{2,1}(X)<\infty$ and $ \Lambda_{2,1}(Y)<\infty$ for $X\sim \mu$ and $Y\sim \nu$; under (2) the $(4+\delta)$-th ($\delta>0$) moment of $\mu$ and $\nu$ are finite.
 
\end{proposition}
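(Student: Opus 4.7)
The plan is to verify, for each of the two choices of test statistic, the two defining properties of consistency at asymptotic level $\alpha$: under $H_0$, $\limsup_{m,n}P(D_{m,n}>c_{m,n})\le\alpha$, and under $H_1$, $\liminf_{m,n}P(D_{m,n}>c_{m,n})=1$. In each case the recipe is the standard bootstrap argument: show $D_{m,n}$ has a non-degenerate weak limit under $H_0$, show that the bootstrap is consistent for the same limit (so the bootstrap quantile $c_{m,n}$ converges in probability to the corresponding quantile of the limit), and show that under $H_1$ the statistic diverges while $c_{m,n}$ stays stochastically bounded. The limit distributions will turn out to be norms of centered Gaussian processes, hence continuous on $(0,\infty)$, which is enough to promote weak convergence of the bootstrap distribution to convergence of its quantiles via the Portmanteau/Slutsky device.

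For case (2) the required limit theorems are already in place. Under $H_0$, Corollary 10(i) gives $W_{m,n}\xrightarrow{d}\|G_P\|_{\mathcal{F}}$, while Corollary 10(ii) applied to the pooled measure $H=\lambda P+(1-\lambda)Q=P$ gives $W_{m,n}^B\xrightarrow{d}\|G_P\|_{\mathcal{F}}$ conditional on almost every sample sequence. Hence $c_{m,n}\xrightarrow{P}c^{*}$, the $(1-\alpha)$-quantile of $\|G_P\|_{\mathcal{F}}$, and $P(W_{m,n}>c_{m,n})\to\alpha$. Under $H_1$, Corollary 11 gives $\sqrt{mn/N}(MSW_1(P_m,Q_n)-MSW_1(\mu,\nu))=O_P(1)$, so $W_{m,n}=\sqrt{mn/N}\,MSW_1(\mu,\nu)+O_P(1)\to\infty$, while Corollary 10(ii) is insensitive to $H_0$ versus $H_1$ and keeps $c_{m,n}=O_P(1)$; therefore $P(W_{m,n}>c_{m,n})\to 1$.

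For case (1) the argument is structurally parallel but we must build the ingredients ourselves. Under $H_0$ we have $F=G$, and the decomposition
\[
\sqrt{\tfrac{mn}{N}}(F_m-G_n)=\sqrt{\tfrac{n}{N}}\,\sqrt{m}(F_m-F)-\sqrt{\tfrac{m}{N}}\,\sqrt{n}(G_n-G)
\]
combined with Theorem 1 applied to each empirical process (whose hypotheses $\Lambda_{2,1}(X),\Lambda_{2,1}(Y)<\infty$ are exactly the Donsker conditions in the separable Banach space $L^1(S^{d-1}\times\mathbb{R})$) and with the independence of the two samples yields weak convergence in $L^1(S^{d-1}\times\mathbb{R})$ to a centered Gaussian process; the continuous mapping theorem applied to $f\mapsto\|f\|_{L^1}$ then delivers the null limit of $\overline{W}_{m,n}$. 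Under $H_1$, the triangle inequality for $SW_1$ together with Corollary 2 gives $\sqrt{mn/N}\,|SW_1(P_m,Q_n)-SW_1(\mu,\nu)|=O_P(1)$, so $\overline{W}_{m,n}\to\infty$ in probability. The main obstacle is the $SW_1$-analogue of Corollary 10(ii): this is the exchangeable bootstrap CLT for Banach-space-valued empirical processes (chapter~3.7 of \cite{15}) applied to $x\mapsto I_{\{\theta^T x\le t\}}-\mu(\theta^T X\le t)$ viewed as an element of $L^1(S^{d-1}\times\mathbb{R})$, whose Donsker hypothesis is precisely Theorem 1. Once this analogue is in hand, $c_{m,n}$ converges in probability to the correct quantile of the null limit under $H_0$ and remains $O_P(1)$ under $H_1$, completing the proof exactly as in case (2).
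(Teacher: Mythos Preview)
For case (2) your argument via Corollaries 10 and 11 is exactly what the paper does (the paper's cross-reference ``Corollary 8'' is a misnumbering for Corollary 10).

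For case (1) your overall structure is right, but the step you flag as ``the main obstacle'' is not adequately discharged by the citation you give. Chapter 3.7 of \cite{15} treats the bootstrap for empirical processes in $\ell^\infty(\mathcal{F})$ over a Donsker class $\mathcal{F}$; it does not directly supply a bootstrap CLT for i.i.d.\ sums in a Banach space such as $L^1(S^{d-1}\times\mathbb{R})$, which is what you need for $\overline{W}_{m,n}^B$. The paper does not invoke abstract Banach-space bootstrap theory either. It instead follows Theorem 2.3 of \cite{69} (a bootstrap consistency result for $W_1$ in one dimension) and reduces the problem to a concrete tightness estimate: for every $\epsilon,\delta>0$ one can find $r\in\mathbb{N}$ with
\[
\mathbb{E}\int_{S^{d-1}\times[-r,r]^c}\bigl|P_m^B(\theta,t)-Q_n^B(\theta,t)\bigr|\,dt\,d\sigma(\theta)<\epsilon\delta,
\]
conditionally on almost every sample sequence. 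This is verified by Cauchy--Schwarz together with the law of large numbers applied to $\tfrac{1}{m}\sum_{i}\|X_i\|I_{\{\|X_i\|>r\}}$ and the analogous $Y$-average. That tail bound is the genuine technical content the paper contributes for (1); once it is in place, the remainder of your outline (null limit from Theorem 1 and the continuous mapping theorem, divergence of $\overline{W}_{m,n}$ under $H_1$ via the triangle inequality and Corollary 1, and stochastic boundedness of $c_{m,n}$ under $H_1$) goes through exactly as you describe.
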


\section{Concluding Remarks}
In this work, we characterize the limit distribution of empirical $SW_1$ and $MSW_1$. Furthermore, for the computation of $SW_p$ based on Monte Carlo method, we show the number of random projections that can make sure the error small in high probability and propose a variant of $SW_p$ which can be estimated easily. Also the results of convergence for $SW_1$ are presented. For $MSW_1$, we present concentration inequalities results and upper bound in expectation in empirical estimation. 

In the future, we want to explore the central limit theorem for $SW_p$. For $SW_p$ and $MSW_p$, other possible direction include applications in statistics and developing efficient algorithms, which is a core problem in practice.

\section{Appendix}
\subsection{Proof of Theorem 1(i)} 
\begin{proof}

Notice that $Y(\theta, t) \in L^1(S^{d-1}\times \mathbb{R})$ a.e. as long as $\mathbb{E}\|X\| < \infty$, since
\begin{align*}
\|Y\|_{L^1} &= 
\int_{S^{d-1}} \int_{-\infty}^{+\infty} |P(\theta^T X>t)-I_{\theta^T X>t}|dtd\sigma(\theta) \\
&=\int_{S^{d-1}}\int_{-\infty}^{\theta^T X} P(\theta^T X \leq t)dtd\sigma(\theta)+
\int_{S^{d-1}}\int_{\theta^T X}^{+\infty} P(\theta^T X > t)dtd\sigma(\theta) \\
&\leq \int_{S^{d-1}}\int_{-\infty}^{|\theta^T X|} P(\theta^T X \leq t)dtd\sigma(\theta)+
\int_{S^{d-1}}\int_{-|\theta^T X|}^{+\infty} P(\theta^T X > t)dtd\sigma(\theta) \\
&\leq \int_{S^{d-1}}\int_{-\infty}^{-|\theta^T X|} P(\theta^T X \leq t)dtd\sigma(\theta)+
\int_{S^{d-1}}\int_{|\theta^T X|}^{+\infty} P(\theta^T X > t)dtd\sigma(\theta)+4|\theta^T X| \\
&= \int_{S^{d-1}}\int_{|\theta^T X|}^{+\infty} P(\theta^T X \leq -t)+P(\theta^T X > t)dtd\sigma(\theta)+4\|X\| \\
&\leq \int_{S^{d-1}}\int_{0}^{+\infty} P(|\theta^T X| \geq t)dtd\sigma(\theta)+4\|X\| \\
&\leq \mathbb{E} \|X\| +4\|X\| .
\end{align*}

 Thus following from the central limit theorem in Banach space (Theorem 10.10 in \cite{11}, see also Theorem 3 in \cite{9} which is more specific), we have that $\sum\limits_{i=1}^{n}Y_i/ \sqrt{n}$ converges weakly in $L^1(S^{d-1}\times \mathbb{R})$ if and only if $\int_{S^{d-1}} \int_{-\infty}^{+\infty} \left(\mathbb{E}[|Y(\theta,t)|^2]  \right)^{1/2}
dt d\sigma(\theta) < \infty$ and $P(\|Y\|_{L^1} >t) = o(t^{-2})$. 

Notice that $\|Y\|_{L^1} \leq \mathbb{E} \|X\| +4\|X\|$, so if $\mathbb{E}\|X\|^2 < \infty $ , then from Markov inequality, $P(\|Y\|_{L^1} >t) = o(t^{-2})$ holds.

Since $\mathbb{E}[|Y(\theta,s)|^2] = P(\theta^T X > t)(1-P(\theta^T X > t) ) = F(\theta,t)(1-F(\theta,t))$, we have
\[\int_{S^{d-1}} \int_{-\infty}^{+\infty} \left(\mathbb{E}[|Y(\theta,t)|^2]  \right)^{1/2}
dt d\sigma(\theta) < \infty \Leftrightarrow  \int_{S^{d-1}} \int_{-\infty}^{+\infty} \sqrt{F(\theta,t)(1-F(\theta,t))}dt d\sigma(\theta) < \infty.\]
We divide the proof into three steps .

\textbf{Step 1:} In the following, we prove that
\[ \int_{S^{d-1}} \int_{-\infty}^{+\infty} \sqrt{F(\theta,t)(1-F(\theta,t))}dtd\sigma(\theta) < \infty \Leftrightarrow  \int_{S^{d-1}} \int_{0}^{+\infty} \sqrt{P(|\theta^T X|>t)}dtd\sigma(\theta) < \infty.\]
We first prove " $\Leftarrow$ ":
\begin{align*}
\int_{-\infty}^{+\infty} \sqrt{F(\theta,t)(1-F(\theta,t))}dt &= 
\int_{-\infty}^{+\infty}\sqrt{P(\theta^T X \leq t)P(\theta^T X>t) }dt \\
&= \int_{0}^{+\infty}  \sqrt{P(\theta^T X \leq t)P(\theta^T X>t) }+\sqrt{P(\theta^T X \leq -t)P(\theta^T X>-t) }dt \\
& \leq \int_{0}^{+\infty} \sqrt{P(\theta^T X>t) } + \sqrt{P(\theta^T X \leq -t)}dt \\
& \leq \int_{0}^{+\infty} \sqrt{2} \sqrt{P(\theta^T X\geq t)+P(\theta^T X\leq -t)}dt \\
&=\int_{0}^{+\infty} \sqrt{2} \sqrt{P(|\theta^T X| \geq t)}dt \\
&  = \int_{-1}^{+\infty}\sqrt{2} \sqrt{P(|\theta^T X| \geq t+1)}dt \\
&\leq \int_{-1}^{+\infty}\sqrt{2} \sqrt{P(|\theta^T X| > t)}dt \\
&= \sqrt{2}+\int_{0}^{+\infty}\sqrt{2} \sqrt{P(|\theta^T X| > t)}dt ,
\end{align*}
where the second inequality follows from $\sqrt{a}+\sqrt{b}\leq \sqrt{2}\sqrt{a+b}$ for $a\geq 0, b\geq 0$. Thus 
\[ \int_{S^{d-1}} \int_{0}^{+\infty} \sqrt{P(|\theta^T X|>t)}dtd\sigma(\theta) < \infty \Rightarrow
\int_{S^{d-1}} \int_{-\infty}^{+\infty} \sqrt{F(\theta,t)(1-F(\theta,t))} dt d\sigma(\theta)< \infty.\]

Next, we prove "$\Rightarrow$": 

As $P(|\theta^T X|>t) \leq \frac{\mathbb{E}|\theta^T X|}{t}\leq \frac{\mathbb{E}\|X\|}{t}$, we have that for any $\epsilon > 0$, there exists a constant $t_0 > 0$ such that for any $\theta$ and $t \geq t_0$, $P(|\theta^T X| > t) \leq \epsilon$ holds. It leads to 
\begin{align*}
&\int_{-\infty}^{+\infty} \sqrt{F(\theta,t)(1-F(\theta,t))}dt  \\
&= \int_{0}^{+\infty}  \sqrt{P(\theta^T X \leq t)P(\theta^T X>t) }+\sqrt{P(\theta^T X \leq -t)P(\theta^T X>-t) }dt \\
&\geq\int_{t_0}^{+\infty}  (\sqrt{P(\theta^T X \leq t)P(\theta^T X>t) }dt+ \int_{t_0}^{+\infty} \sqrt{P(\theta^T X \leq -t)P(\theta^T X>-t) })dt .
\end{align*}
For the first integral, we have
\begin{align*}
\int_{t_0}^{+\infty}  \sqrt{P(\theta^T X \leq t)P(\theta^T X>t) }  dt &= 
\int_{t_0}^{+\infty}  \sqrt{P(\theta^T X > t)(1-P(\theta^T X > t)) }  dt \\
&\geq \sqrt{1-\epsilon} \int_{t_0}^{+\infty}  \sqrt{P(\theta^T X > t) }  dt .
\end{align*}
The inequality follows from that when $t \geq t_0$, $P(\theta^T X > t) \leq P(|\theta^T X| > t) \leq \epsilon$.

Similarly, we have
\[ \int_{t_0}^{+\infty}  \sqrt{P(\theta^T X \leq -t)P(\theta^T X>-t) }  dt \geq \sqrt{1-\epsilon} \int_{t_0}^{+\infty} \sqrt{P(\theta^T X \leq -t) }  dt. \]
Thus
\begin{align*}
&\int_{t_0}^{+\infty}  \sqrt{P(\theta^T X \leq t)P(\theta^T X>t) }+\sqrt{P(\theta^T X  
	\leq -t)P(\theta^T X>-t) }dt \\
&\geq  \sqrt{1-\epsilon} \int_{t_0}^{+\infty} \sqrt{P(\theta^T X > t) }+ \sqrt{P(\theta^T X \leq -t) }  dt\\
&\geq  \sqrt{1-\epsilon} \int_{t_0}^{+\infty} \sqrt{P(\theta^T X > t) +P(\theta^T X \leq -t) }  dt\\
&\geq  \sqrt{1-\epsilon} \int_{t_0}^{+\infty} \sqrt{P(\theta^T X > t) +P(\theta^T X < -t) }  dt\\
&= \sqrt{1-\epsilon} \int_{t_0}^{+\infty} \sqrt{P(|\theta^T X |> t) }  dt.
\end{align*}
The second inequality is from $\sqrt{a}+\sqrt{b} \geq \sqrt{a+b}$.\\
From above, we have
\begin{align*}
	\int_{-\infty}^{+\infty} \sqrt{F(\theta,t)(1-F(\theta,t))}dt &\geq \sqrt{1-\epsilon} \int_{t_0}^{+\infty} \sqrt{P(|\theta^T X |> t) }dt \\
	&\geq \sqrt{1-\epsilon} \int_{0}^{+\infty} \sqrt{P(|\theta^T X |> t) }dt- t_0\sqrt{1-\epsilon} .
\end{align*}
Now, we can assert that
\[ \int_{S^{d-1}} \int_{-\infty}^{+\infty} \sqrt{F(\theta,t)(1-F(\theta,t))}dtd\sigma(\theta) < \infty \Rightarrow \int_{S^{d-1}} \int_{-\infty}^{+\infty} \sqrt{P(|\theta^T X|>t)}dtd\sigma(\theta) < \infty.\]

\textbf{Step 2:} In this step,  we are going to prove that
\[ \int_{S^{d-1}} \int_{0}^{+\infty}\sqrt{P(|\theta^T X|>t)}dtd\sigma(\theta) <\infty \Leftrightarrow \int_{0}^{+\infty} \sqrt{P(\|X\|>t)}dt <+\infty.\]

"$\Leftarrow$": It follows directly from $|\theta^T X| \leq \|X\|$.

"$\Rightarrow$":

From $\int_{S^{d-1}} \int_{0}^{+\infty}\sqrt{P(|\theta^T X|>t)}dtd\sigma(\theta)<\infty$, we have 
 \[\int_{0}^{+\infty}\sqrt{P(|\theta^T X|>t)}dt <\infty, \ for \ a.e. \ \theta \in S^{d-1}.\]
 
Notice that by the compactness of $S^{d-1}$, we can pick a finite set $\{ \theta_1, \cdots, \theta_n \} \subset S^{d-1}$ such that
$ \int_{0}^{+\infty}\sqrt{P(|{\theta}_i^T X|>t)}dt <\infty \ (1\leq i \leq n)$ and for all $x \in \mathbb{R}^d$, the following inequality holds
\[ \frac{\|x\|}{2} \leq \max_{1 \leq i \leq n}|\theta_i^T x| \leq \sum\limits_{i=1}^{n} |\theta_i^T x|.\]
Thus, we have
\begin{align*}
\sqrt{P(\|X\|>t)}&\leq \sqrt{P(\sum\limits_{i=1}^{n} |\theta_i^T X| >\frac{t}{2})} \\
&\leq \sqrt{\sum\limits_{i=1}^{n} P(|\theta_i^T X|>\frac{t}{2n}) }\\
&\leq \sum\limits_{i=1}^{n} \sqrt{P(|\theta_i^T X|>\frac{t}{2n})}.
\end{align*}
It leads to 
\[ \int_{0}^{+\infty} \sqrt{P(\|X\|>t)} dt \leq  \sum\limits_{i=1}^{n} \int_{0}^{+\infty} \sqrt{P(|\theta_i^T X|>\frac{t}{2n})}dt <\infty.\]
It follows from step 1 and step 2 that 
\[ \int_{S^{d-1}} \int_{-\infty}^{+\infty} \left(E[|Y(\theta,t)|^2]  \right)^{1/2}
dt d\sigma(\theta) < \infty  \Leftrightarrow \int_{0}^{+\infty} \sqrt{P(\|X\|>t)}dt <+\infty.\]

Therefore, we only need to prove that
\[\int_{0}^{+\infty} \sqrt{P(\|X\|>t)}dt <+\infty
\Rightarrow P(\|Y\|_{L^1} >t) = o(t^{-2}).\]
Notice that $\|Y\|_{L^1}\leq \mathbb{E}\|X\|+4\|X\|$, it's sufficient to prove 
\[ \int_{0}^{+\infty} \sqrt{P(\|X\|>t)}dt <+\infty
\Rightarrow \mathbb{E}\|X\|^2 < \infty,\]
which is obvious. Thus we complete the proof.
\end{proof}

\subsection{Proof of Theorem 1(ii)}
\begin{proof}
The proof is just a minor modification of the proof of Theorem 2.1(b) in \cite{1}. 
Let $Z(\theta, t) = I_{\theta^T X>t} $, then $Y(\theta, t)=Z(\theta,t)-\mathbb{E}Z(\theta,t)$.

For necessity, from the proof of Theorem 2.1(b) in \cite{1} (i.e. combining with Hoffmann-Jørgensen's inequality and Montgomery-Smith's inequality, the stochastical boundness implies the boundness of the first moment), we have
\[ \sup_{n} \mathbb{E}\left\| \frac{ \sum_{i=1}^{n}(Z_i-\mathbb{E}Z_i)   }{\sqrt{n}} \right \|_{L^1} <\infty,\]
and for a binomial $(n,p)$ random variable $\xi$ such that $4c_1/n \leq p \leq 1/2$, we have
\[ \frac{np}{2}\leq \mathbb{E}(\xi-\mathbb{E}\xi)^2 \leq c_1[1+(c_2^{-1} \mathbb{E}|\xi-\mathbb{E}\xi|)^2] \leq \frac{np}{4} + \frac{c_1}{c_2^2}(\mathbb{E}|\xi-\mathbb{E}\xi|)^2.\]
In other words, there exist positive finite constants $C_1$ and $C_2$ such that 
\begin{align}
L(\xi)=Bin(n,p) \ with \ \frac{C_1}{n} \leq p \leq \frac{1}{2} \ implies \ E|\xi-E\xi| \geq C_2\sqrt{np}.
\end{align}

For real-valued random variables $\hat{X}, \hat{X_1},\cdots, \hat{X_n}$, 

when $t < 0$, observe that $I_{\hat{X}^{+}>t}-P(\hat{X}^+ > t)=0$, so we have $|\sum_{i=1}^n I_{\hat{X}_i>t}-P(\hat{X}_i>t)| \geq 0=|\sum_{i=1}^n I_{\hat{X_i}^+>t}-P(\hat{X_i}^+>t)|$;

when $t \geq 0$, we have $I_{\hat{X}>t}=I_{\hat{X}^+ > t},\ P(\hat{X}>t)= P(\hat{X}^+>t)$, which implies $|\sum_{i=1}^n I_{\hat{X_i}>t}-P(\hat{X_i}>t)| =|\sum_{i=1}^n I_{\hat{X_i}^+>t}-P(\hat{X_i}^+>t)|$.

Combining above, we have $|\sum_{i=1}^n I_{\hat{X}_i>t}-P(\hat{X}_i>t)| \geq |\sum_{i=1}^n I_{\hat{X}_i^+>t}-P(\hat{X}_i^+>t)|$ holds for any $ t \in \mathbb{R}$. 

Let $Z^+(\theta, t) = I_{(\theta^T X)^+ >t}$. From above, we have $|\sum_{i=1}^n (Z_i^+ -\mathbb{E} Z_i^+) | \leq |\sum_{i=1}^n (Z_i-\mathbb{E}Z_i) |$, which implies that $\|\sum_{i=1}^n \frac{Z_i^+ -\mathbb{E} Z_i^+}{\sqrt{n}}\|_{L^1}$ is also stochastically bounded. Thus (1) yields
\[\sqrt{P((\theta^T X)^+ >t)} \leq \frac{1}{C_2} \mathbb{E} \left| \frac{\sum_{i=1}^n I_{(\theta^T X_i)^+ >t}-P((\theta^T X_i)^+ >t)}{\sqrt{n}}\right|,\]
for $med((\theta^T X)^+) <t<Q_{\theta}(1-\frac{C_1}{n})$, where $Q_{\theta}(t) = \inf\{x: P((\theta^T X)^+ \leq x) \geq t\}$. After integrating both sides, we have
\begin{align*}
&\int_{S^{d-1}}\int_{med((\theta^T X)^+)}^{Q_{\theta}(1-\frac{C_1}{n})} \sqrt{P((\theta^T X)^+ >t)} dtd\sigma(\theta) \\
&\leq \frac{1}{C_2}
\int_{S^{d-1}} \int_{med((\theta^T X)^+)}^{Q_{\theta}(1-\frac{C_1}{n})} \mathbb{E} \left|\frac{\sum_{i=1}^n I_{(\theta^T X_i)^+ >t}-P((\theta^T X_i)^+ >t)}{\sqrt{n}} \right|  dtd\sigma(\theta)\\
&\leq \frac{1}{C_2}  \int_{S^{d-1}} \int_{0}^{+\infty} \mathbb{E}\left|\frac{\sum_{i=1}^n I_{(\theta^T X_i)^+ >t}-P((\theta^T X_i)^+ >t)}{\sqrt{n}}\right|  dtd\sigma(\theta)\\
&= \frac{1}{C_2}  E\left\| \frac{\sum_{i=1}^n (Z_i^+-\mathbb{E}Z_i^+)}{\sqrt{n}} \right\|_{L^1} .
\end{align*}
It implies that 
\[\sup_{n} \int_{S^{d-1}}\int_{med((\theta^T X)^+)}^{Q_{\theta}(1-\frac{C_1}{n})} \sqrt{P((\theta^T X)^+ >t)} dtd\sigma(\theta)
\leq \sup_{n} \frac{1}{C_2}  \mathbb{E}\left\| \frac{\sum_{i=1}^n (Z_i^+-\mathbb{E}Z_i^+)}{\sqrt{n}} \right\|_{L^1} < \infty.\]
Since $Q_{\theta}(1-\frac{C_2}{n}) \rightarrow ess\sup (\theta^T X)^+$ as $n \to \infty$. We have
\begin{align*}
& \int_{S^{d-1}}\int_{med((\theta^T X)^+)}^{+\infty} \sqrt{P((\theta^T X)^+ >t)} dtd\sigma(\theta)\\
&= \int_{S^{d-1}} \liminf_{n \to \infty} \int_{med((\theta^T X)^+)}^{Q_{\theta}(1-\frac{C_1}{n})} \sqrt{P((\theta^T X)^+ >t)} dtd\sigma(\theta) \\
&\leq \liminf_{n \to \infty} \int_{S^{d-1}} \int_{med((\theta^T X)^+)}^{Q_{\theta}(1-\frac{C_1}{n})} \sqrt{P((\theta^T X)^+ >t)} dtd\sigma(\theta) \\
&\leq \sup_{n} \int_{S^{d-1}}\int_{med((\theta^T X)^+)}^{Q_{\theta}(1-\frac{C_1}{n})} \sqrt{P((\theta^T X)^+ >t)} dtd\sigma(\theta)\\
&<\infty,
\end{align*}
where the first inequality follows from the Fatou lemma. In the other hand, from the definition of the median, we have $P((\theta^T X)^+ \geq med((\theta^T X)^+ ) \geq \frac{1}{2}$ and then
\[ P(\|X\|\geq med((\theta^T X)^+ ) \geq P(|\theta^T X|\geq med((\theta^T X)^+) \geq P((\theta^T X)^+ \geq med((\theta^T X)^+ ) \geq \frac{1}{2}.\]
Combining with $\mathbb{E}\|X\|<\infty$ and $P(\|X\| \geq t) \leq \frac{\mathbb{E}\|X\|}{t}$. Let $t= med((\theta^T X)^+ )$, we have $med((\theta^T X)^+ \leq 2\mathbb{E}\|X\|$.

Thus $\int_{S^{d-1}}\int_{2\mathbb{E}\|X\|}^{+\infty} \sqrt{P((\theta^T X)^+ >t)} dtd\sigma(\theta) < \infty$ and it implies that
\[\int_{S^{d-1}}\int_{0}^{+\infty} \sqrt{P((\theta^T X)^+ >t)} dtd\sigma(\theta) < \infty.\]

For $\hat{X}, \hat{X_1}, \cdots, \hat{X_n}$ defined as above.

When $t\leq 0$, we have $I_{\hat{X}^- \geq t}-P(\hat{X}^- \geq t) = 0$, thus $|\sum_{i=1}^{n}I_{\hat{X_i}\leq -t} -P(\hat{X_i}\leq -t)| \geq 0=|\sum_{i=1}^{n} I_{\hat{X_i}^- \geq t}-P(\hat{X_i}^- \geq t)| $.

When $t> 0$, we have $\hat{X} \leq -t \Leftrightarrow \hat{X}^- \geq t$, so $I_{\hat{X}\leq -t} -P(\hat{X}\leq -t) = I_{\hat{X}^- \geq t}-P(\hat{X}^- \geq t)$.

From above, for any $t \in \mathbb{R}$, $|\sum_{i=1}^n I_{\hat{X}_i \leq -t}-P(\hat{X}_i \leq -t)| \geq |\sum_{i=1}^n I_{\hat{X}_i^{-} \geq t}-P(\hat{X}_i^{-} \geq t)| $ holds. 

Let $Z^- = I_{(\theta^T X)^-\geq t }$, for the sequence $\sum_{i=1}^n Y_i$, we have
\begin{align*}
\int_{-\infty}^{+\infty}|\sum_{i=1}^n Y_i| dt&= \int_{-\infty}^{+\infty}|\sum_{i=1}^n (I_{\theta^T X>t} - P(\theta^T X>t))|dt\\
& = \int_{-\infty}^{+\infty}|\sum_{i=1}^n (I_{\theta^T X\leq t} - P(\theta^T X\leq t))|dt\\
&= \int_{-\infty}^{+\infty}|\sum_{i=1}^n (I_{\theta^T X\leq -t} - P(\theta^T X\leq -t))|dt\\
&\geq \int_{-\infty}^{+\infty}|\sum_{i=1}^n(I_{(\theta^T X)^-\geq t} - P((\theta^T X)^-\geq t))|dt,
\end{align*}
which implies that the sequence $\|\sum_{i=1}^n \frac{Z_i^- - \mathbb{E}Z_i^-}{\sqrt{n}}\|_{L^1}$ is also stochastically bounded. Thus similarly, we can obtain that
\[ \int_{S^{d-1}}\int_{0}^{+\infty} \sqrt{P((\theta^T X)^- >t)} dtd\sigma(\theta)<\infty.\]
Since
\begin{align*}
	\sqrt{P(|\theta^T X| >t)} &= \sqrt{P((\theta^T X)^+ + (\theta^T X)^- >t)} \\
	&\leq \sqrt{P((\theta^T X)^+ > \frac{t}{2}) +P((\theta^T X)^- > \frac{t}{2})}	\\
	&\leq \sqrt{P((\theta^T X)^+ > \frac{t}{2}) }+ \sqrt{P((\theta^T X)^- > \frac{t}{2}) }  .
\end{align*}
Now we can conclude that $\int_{S^{d-1}}\int_{0}^{+\infty} \sqrt{P(|\theta^T X|>t)} dtd\sigma(\theta)<\infty$. And it leads to that $\int_{0}^{+\infty} \sqrt{P(\|X\| >t)}dt < \infty$.

Sufficiency: If $\Lambda_{2,1}(X)<\infty$, Corollary 1 implies that there exists a constant $M>0$ such that $\sup_{n} \mathbb{E}[\sqrt{n}SW_1(\mu_n,\mu)]\leq M$. Therefore, the conclusion follows from Markov inequality. \\
\end{proof}

\subsection{Proof of Proposition 1}

\begin{proof}
(1): The proof is based on the method in Proposition 2.1 in \cite{60} with just a little change. \cite{61} explored the central limit theorem for $W_1$ in one dimension under $i.i.d.$ and non-$i.i.d.$ conditions. Due to the similarity between $W_1$ in one dimension and $SW_1$, the method also works for $SW_1$. 

Following from the same decomposition:
\[ |x+h|-|x|=hI_{x+h\geq 0, x>0}- hI_{x+h<0, x<0}+|h|I_{x=0}+(|x+h|-|x|)(I_{x+h\geq 0, x<0}+I_{x+h<0, x>0}), \]
which yields that
\begin{align*}
|x+h|-|x|=&hI_{x>0}-hI_{x<0}+|h|I_{x=0}-hI_{x+h<0,x>0}+hI_{x+h\geq 0, x<0} \\
& (|x+h|-|x|)(I_{x+h\geq 0, x<0}+I_{x+h<0,x>0}).
\end{align*}

Then we have
\begin{equation}
|x+h|-|x|=hI_{x>0}-hI_{x<0}+|h|I_{x=0}+2R(h,x),
\end{equation}
where $R(h,x)\leq |h|(I_{x+h\geq 0, x<0}+I_{x+h<0, x>0})$.

Let $x=F(\theta, t)-G(\theta,t)$, $h=F_n(\theta,t)-F(\theta,t)$ in (2), we can deduce that 
\[\sqrt{n}(|F_n-G|-|F-G|)=sign\{F-G\}\sqrt{n}(F_n-F)I_{F\neq G} +\sqrt{n}|F_n-F|I_{F=G}+2R_n,\]
where 
\begin{equation}
|R_n|\leq \sqrt{n}|F_n-F|(I_{F_n\geq G, F< G}+I_{F_n<G, F>G}),
\end{equation}
and
\begin{equation*}
sign(x)=\left\{\
\begin{aligned}
1&, &x>0, \\
0&, &x=0, \\
-1&, &x<0. 
\end{aligned}
\right.
\end{equation*}
Recall that
\[\sqrt{n}(SW_1(\mu_n, \nu)-SW_1(\mu, \nu))=\sqrt{n}\int_{S^{d-1}}\int_{-\infty}^{\infty} (|F_n(\theta,t)-G(\theta,t)|-|F(\theta,t)-G(\theta,t)|)ddtd\sigma(\theta). \]

By central limit theorem in $L^1(S^{d-1}\times \mathbb{R})$ and continuous mapping theorem (notice that $sign\{F-G\}, I_{F\neq G}, I_{F=G} \in L^{\infty}(S^{d-1}\times \mathbb{R})$), it's sufficient to prove that $\|R_n\|_{L^1}$ converges to 0 in probability as $n \to \infty$. 

On the other hand, provided $\mu \in \mathcal{P}_{2,1}(\mathbb{R}^d)$, by Fubini theorem and Cauchy inequality, we have
\begin{align*}
\mathbb{E}[\|R_n\|_{L^1}] &=\int_{S^{d-1}}\int_{-\infty}^{\infty} \mathbb{E}|R_n(\theta,t)|dtd\sigma(\theta) \\
&\leq \int_{S^{d-1}}\int_{-\infty}^{\infty} (\mathbb{E}|R_n(\theta,t)|^2)^{1/2} dtd\sigma(\theta)\\
&\leq \int_{S^{d-1}}\int_{-\infty}^{\infty} \sqrt{P(|\theta^T X_1|>t)}dtd\sigma(\theta)<\infty.
\end{align*}
With dominated convergence theorem, it's sufficient to prove that $\mathbb{E}|R_n(\theta,t)|$ converges to 0 as $n \to \infty$ for any $(\theta,t)\in S^{d-1}\times \mathbb{R}$. 

Let $T_n(\theta,t)=\sqrt{n}|F_n-F|(I_{F_n\geq G, F< G}+I_{F_n<G, F>G})$. By (3) and Markov inequality, we have 
\begin{align*}
\mathbb{E}|R_n(\theta,t)| \leq \mathbb{E} T_n(\theta,t)I_{T_n(\theta,t)>M}&+MP(F_n(\theta,t)\geq G(\theta,t),F(\theta,t)<G(\theta,t) )\\
&+MP(F_n(\theta,t)<G(\theta,t), F(\theta,t)>G(\theta,t)).
\end{align*}
Since $F_n(\theta,t)$ converges almost surely to $F(\theta,t)$, we can assert that
\[\lim\limits_{n\to \infty} P(F_n(\theta,t)\geq G(\theta,t),F(\theta,t)<G(\theta,t) )+P(F_n(\theta,t)<G(\theta,t), F(\theta,t)>G(\theta,t))=0. \]
Moreover, by Markov inequality, we have
\[\mathbb{E} T_n(\theta,t)I_{T_n(\theta,t)>M} \leq \frac{\mathbb{E}T_n(\theta,t)^2}{M} \leq \frac{1}{M}. \]
Combining above, we have
\[\lim\limits_{n\to \infty}\lim\sup_{n \to \infty}\mathbb{E}T_n(\theta,t)I_{T_n(\theta,t)>M}=0 .\]
Therefore, $\mathbb{E}|R_n(\theta,t)|$ converges to 0 as $n \to \infty$ for any $(\theta,t)\in S^{d-1}\times \mathbb{R}$, which yields the conclusion.

(2) Let $x=F(\theta,t)-G(\theta,t), h=(F_n(\theta,t)-G_n(\theta,t))-(F(\theta,t)-G(\theta,t))$, by the same proof in (1), we have the conclusion. 
\end{proof}

\subsection{Proof of Proposition 2}
\begin{proof}
First, Fubini theorem implies
\begin{align*}
\mathbb{E}[SW_1(\mu_n, \mu)] &= \int_{S^{d-1}}\mathbb{E}[W_1(\theta_{\#}^{*}\mu_n,\theta_{\#}^{*}\mu )] d\sigma(\theta) \\
&= \int_{S^{d-1}} \mathbb{E}[\int_{-\infty}^{+\infty} |F_n(\theta, t)-F(\theta, t)|dt]d\sigma(\theta).
\end{align*}
To invoke dominated convergence theorem, we need to estimate the part inside the integral. 
\begin{align*}
&\mathbb{E}\left[\int_{-\infty}^{+\infty} |F_n(\theta, t)-F(\theta, t)|dt\right]\\
&= \int_{-\infty}^{+\infty} \mathbb{E}[  |F_n(\theta, t)-F(\theta, t)|       ]dt \\
&= \int_{-\infty}^{0}\mathbb{E}[  |F_n(\theta, t)-F(\theta, t)|       ]dt   + \int_{0}^{+\infty}\mathbb{E}[  |F_n(\theta, t)-F(\theta, t)|       ]dt .
\end{align*}
For $t \in(-\infty, 0]$, we have
\begin{align*}
\mathbb{E}[  |F_n(\theta, t)-F(\theta, t)|       ] &\leq \mathbb{E}[F_n(\theta,t)+ F(\theta,t)] \\
&= 2F(\theta,t) \\
& = 2P(\theta^T X \leq t) \leq 2P(\|X\|\geq |t|) .
\end{align*}
For $t \in (0,+\infty)$, we have
\begin{align*}
\mathbb{E}[  |F_n(\theta, t)-F(\theta, t)|       ] &\leq \mathbb{E}[1-F_n(\theta,t)+ 1-F(\theta,t)] \\
&= 2(1-F(\theta,t)) \\
& = 2P(\theta^T X >t) \leq 2P(\|X\|>t) .
\end{align*}
Combining above, we obtain that
\[\mathbb{E}\left[\int_{-\infty}^{+\infty} |F_n(\theta, t)-F(\theta, t)|dt\right] \leq 4 \int_{0}^{+\infty}P(\|X\| \geq t) dt = 4\mathbb{E}[\|X\|].\]

Theorem 2.14 in \cite{13} gives the same result for the Wasserstein distance in one dimension, which implies that $\mathbb{E}[W_1(\theta_{\#}^{*}\mu_n,\theta_{\#}^{*}\mu )] \to 0, as \ n \to \infty$ for any $\theta \in S^{d-1}$. Thus the conclusion follows from the dominated convergence theorem. 
\end{proof}

\subsection{Proof of Corollary 1}
\begin{proof}
Sufficiency: 
\begin{align*}
\mathbb{E}[SW_1(\mu_n, \mu)] &=\int_{S^{d-1}}\int_{-\infty}^{+\infty} \mathbb{E}|F_n(\theta,t)-F(\theta, t)|dtd\sigma(\theta) \\
&\leq \int_{S^{d-1}}\int_{-\infty}^{+\infty} \left( \mathbb{E}|F_n(\theta,t)-F(\theta, t)|^2\right)^{1/2}dtd\sigma(\theta) \\
&=\frac{1}{\sqrt{n}} \int_{S^{d-1}}\int_{-\infty}^{+\infty} \sqrt{F(\theta,t)(1-F(\theta,t))}dtd\sigma(\theta).
\end{align*}
Thus $\mathbb{E}[SW_1(\mu_n, \mu)]=\mathcal{O}(\frac{1}{\sqrt{n}})$, 
Since 
\[\Lambda_{2,1}(X)<\infty \Leftrightarrow \int_{S^{d-1}}\int_{-\infty}^{+\infty} \sqrt{F(\theta,t)(1-F(\theta,t))}dtd\sigma(\theta)<\infty.\]
Necessity: $\mathbb{E}[SW_1(\mu_n),\mu]=\mathcal{O}(\frac{1}{\sqrt{n}})$ implies that the sequence in Theorem 1(ii) is stochastically bounded, so $\Lambda_{2,1}(X)<\infty$. 
\end{proof}

\subsection{Proof of Proposition 3}
By Lemma 3.4 in \cite{13}, that is for any independent mean zero random variables $\xi_1, \cdots, \xi_n$, the following inequality holds.
\[\mathbb{E}|\sum\limits_{k=1}^n\xi_k| \geq c \mathbb{E} \left(\sum\limits_{k=1}^n\xi_k^2 \right)^{1/2}, \]
where $c$ is an absolute constant.

Note that 
\[\mathbb{E}[SW_1(\mu_n,\mu)] =\int_{S^{d-1}} \int_{-\infty}^{+\infty} \mathbb{E}|F_n(\theta,t)-F(\theta, t)|dtd\sigma(\theta).\]
And let $\xi_k=I_{\theta^T X_i\leq t}-F(\theta,t)$, we have
\begin{align*}
\mathbb{E}|F_n(\theta,t)-F(\theta, t)| &\geq \frac{c}{n}\mathbb{E} \left(\sum\limits_{k=1}^n\xi_k^2 \right)^{1/2}\\
&\geq\frac{c}{n} \left( \sum\limits_{k=1}^n (\mathbb{E}|\xi_k|)^2 \right)^{1/2}\\
&= \frac{2c}{\sqrt{n}} F(\theta,t)(1-F(\theta,t)),
\end{align*}
where the second inequality follows from the Minkowski inequality.

Then 
\[ \mathbb{E}[SW_1(\mu_n,\mu)] \geq \frac{2c}{\sqrt{n}} \int_{S^{d-1}} \left[\int_{-\infty}^{+\infty} F(\theta,t)(1-F(\theta,t))dt\right]d\sigma(\theta).\]

Note that for a real-valued random variable $Y$ with CDF $G(y)$ and let $Y^{'}$ be an independent copy of $Y$, the equality $\int_{-\infty}^{\infty}G(y)(1-G(y))dy=\frac{1}{2}\mathbb{E}|Y-Y^{'}|$ holds. Let $X\sim \mu$ and $X^{'}$ be an independent copy of $X$, then we have 
\begin{align*}
\mathbb{E}[SW_1(\mu_n,\mu)] &\geq \frac{c}{\sqrt{n}} \int_{S^{d-1}} \mathbb{E}|\theta^T X- \theta^T X^{'}|d\sigma(\theta) \\
&= \frac{c}{\sqrt{n}}\mathbb{E}  \left[\int_{S^{d-1}}|\theta^T X- \theta^T X^{'}|d\sigma(\theta) \right]\\
&=\frac{c\gamma_d}{\sqrt{n}} \mathbb{E}\|X-X^{'}\|       ,
\end{align*}
where $\gamma_d$ is a constant depending only on $d$.

In the next, we write $\mathbb{E}_{X}$ for taking expectation for $X$, then we have
\begin{align*}
\mathbb{E}\|X-\mathbb{E}X\|&=\mathbb{E}_{X}\|X-\mathbb{E}_{X}X-\mathbb{E}_{X^{'}}(X^{'}-\mathbb{E}_{X^{'}}X^{'}) \|\\
&=\mathbb{E}_{X}\| \mathbb{E}_{X^{'}}(X-\mathbb{E}_{X}X)-(X^{'}-\mathbb{E}_{X^{'}}X^{'}) \|\\
&\leq \mathbb{E}_{X,X^{'}} \| (X-\mathbb{E}_{X}X)-(X^{'}-\mathbb{E}_{X^{'}}X^{'}) \|\\
&= \mathbb{E} \|X-X^{'}\|.
\end{align*}

Combining above, we conclude.

\subsection{Proof of Proposition 4} 
\begin{proof}
Let $F(\theta) := W_p^p(\theta_{\#}^{*} \mu, \theta_{\#}^{*} \nu)$, then $SW_p^p(\mu, \nu) = \mathbb{E}_{\sigma} F(\theta)$. We firstly prove that $F(\theta)$ is a Lipschitz continuous function with Lipschitz constant at most $pW_p^{p-1}(\mu, \nu)(M_p(\mu)+M_p(\nu))$.
\begin{align*}
|F(\theta_1)-F(\theta_2) | &= |W_p^p({\theta_1^*}_{\#}\mu, {\theta_1^*}_{\#}\nu ) -W_p^p({\theta_2^*}_{\#}\mu, {\theta_2^*}_{\#}\nu ) | \\
&=p\xi^{p-1} |W_p({\theta_1^*}_{\#}\mu, {\theta_1^*}_{\#}\nu ) -W_p({\theta_2^*}_{\#}\mu, {\theta_2^*}_{\#}\nu ) | \\
&\leq p\xi^{p-1} [W_p({\theta_1^*}_{\#}\mu, {\theta_2^*}_{\#}\mu )+W_p({\theta_1^*}_{\#}\nu, {\theta_2^*}_{\#}\nu )]\\
&\leq pW_p^{p-1}(\mu, \nu)[W_p({\theta_1^*}_{\#}\mu, {\theta_2^*}_{\#}\mu )+W_p({\theta_1^*}_{\#}\nu, {\theta_2^*}_{\#}\nu )]
\end{align*}
where the second equality follows from the Lagrange Mean value Theorem and the last inequality follows by that for $W_p({\theta^*}_{\#}\mu, {\theta^*}_{\#}\nu ) \leq W_p(\mu, \nu)$ for any $\theta \in S^{d-1}$.

Notice that $({\theta_1^*}, {\theta_2^*})_{\#} \mu \in \prod({\theta_1^*}_{\#}\mu, {\theta_2^*}_{\#}\mu) $ where $({\theta_1^*}, {\theta_2^*})(x)=({\theta_1^*}(x), {\theta_2^*}(x))$. Thus we have
\begin{align*}
W_p({\theta_1^*}_{\#}\mu, {\theta_2^*}_{\#}\mu) & \leq \left(\int_{\mathbb{R}\times \mathbb{R}} |x-y|^p d({\theta_1^*}, {\theta_2^*})_{\#} \mu(x,y) \right)^{\frac{1}{p}} \\
&= \left( \int_{\mathbb{R}^d} |\theta_1^*(z)-\theta_2^*(z) |^p d\mu(z)   \right)^{\frac{1}{p}} \\
&= \left( \int_{\mathbb{R}^d} |\langle \theta_1-\theta_2, z \rangle|^p d\mu(z)   \right)^{\frac{1}{p}} \\
& \leq \|\theta_1-\theta_2\| \left( \int_{\mathbb{R}^d} \|z\|^p d\mu(z) \right)^{\frac{1}{p}} \\
& = \|\theta_1-\theta_2\| M_p(\mu).
\end{align*}
Similarly, we have $W_p({\theta_1^*}_{\#}\nu, {\theta_2^*}_{\#}\nu)  \leq |\theta_1-\theta_2| M_p(\nu)$. Thus, we obtain
\begin{align*}
|F(\theta_1)-F(\theta_2)| &\leq pW_p^{p-1}(\mu, \nu)(M_p(\mu)+M_p(\nu)) \|\theta_1-\theta_2\|  \\
&\leq pW_p^{p-1}(\mu, \nu)(M_p(\mu)+M_p(\nu)) \rho(\theta_1, \theta_2)
\end{align*}
where $\rho(x, y):= \arccos \langle x,y \rangle$ is the geodesic distance in $S^{d-1}$.

In the following, we need the concentration inequality of uniform distribution over the $d$-dimensional unit sphere. By seeing $S^{d-1}$ as a compact connected smooth Riemannian manifold, then Proposition 2.17 in \cite{59} tells us that  for any 1-Lipschitz continuous function $f$ on $S^{d-1}$ and $X\sim \sigma$, $f(X)$ is sub-Gaussian with parameter ${\widetilde{\sigma}}^2=\frac{1}{c(S^{d-1})}=\frac{1}{d-1}$, where $c(S^{d-1})$ is infimum of Ricci curvature tensor over all tangent vectors. Then
from Hoeffding's inequality (Proposition 2.5 in \cite{14}), we have 
\[ P \left(  | \frac{F(\theta_1)+\cdots+ F(\theta_n)}{n} -\mathbb{E}_{\sigma}[F(\theta)]   | \geq   \epsilon \right)
\leq  2 e^{-\frac{(d-1)n \epsilon^2}{2L^2}} .\]

Therefore if requiring  $P \left(  | \frac{F(\theta_1)+\cdots+ F(\theta_n)}{n} -\mathbb{E}_{\sigma}[F(\theta)]   | \geq   \epsilon \right) \leq \delta $, we can let 
$2 e^{-\frac{(d-1)n \epsilon^2}{2L^2}} \leq \delta$, i.e. $ n \geq \frac{2L^2}{(d-1) \epsilon^2 } \log\frac{2}{\delta}$. 
\end{proof}

\subsection{Proof of Corollary 3}
\begin{proof}
\begin{align*}
M_2^2(\mu) &= \int_{\mathbb{R}^d} x_1^2 + \cdots + x_d^2 d \mu(x) \\
&= \int_{\mathbb{R}^d} f\circ \pi_1(x)  +\cdots + f\circ \pi_d(x) \mu(x) \\
&= \int_{\mathbb{R}} f(y)d(\pi_i)_{\#} \mu(y)+\cdots + f(y)d(\pi_d)_{\#} \mu(y) \\
&= \sum\limits_{i=1}^d M_2^2((\pi_i)_{\#}\mu) \leq d \max_i M_2^2((\pi_i)_{\#}\mu).
\end{align*}
Combining with Corollary 3 and $\frac{d}{d-1}\leq 2$ for $d\geq 2$, we have the conclusion. \end{proof}

\subsection{Proof of Corollary 5}
\begin{proof}
Let 
\[ \overline{SW}_{p}^p(\mu, \nu) := \int_{\mathbb{R}^d} W_p^p({\theta}_{\#}^{*} \mu, {\theta}_{\#}^{*} \nu) d \beta_d(\theta).\]
where $\beta_d$ is the zero-mean Gaussian distribution with covariance matrix $I_d$.

Then by calculation similar to Proposition 1 in \cite{6}, we have $\overline{SW}_{p}^p(\mu, \nu) = d^{p/2} \widetilde{SW}_p^p(\mu,\nu)$. Thus the conclusion follows by the concentration properties of Lipschitz functions of Gaussian variables (Theorem 2.26 in \cite{14}).
\end{proof}

\subsection{Proof of Lemma 1}
\begin{proof}
\begin{align*}
MSW_1(P_n, P) &= \max_{\theta \in S^{d-1} } W_1( {\theta}_{\#}^{*} P_n, {\theta}_{\#}^{*} P) \\
&= \max_{\theta \in S^{d-1} } \sup_{g \in Lip_{1,0}(\mathbb{R})} \int g d {\theta}_{\#}^{*} P_n - \int g d {\theta}_{\#}^{*} P \\
&= \max_{\theta \in S^{d-1} } \sup_{g \in Lip_{1,0}(\mathbb{R})} \int g(\theta^T x) d P_n(x) - \int g(\theta^T x) d P(x) \\
&= \sup_{f \in \mathcal{F}} \int f d P_n - \int f d P \\
&= \sup_{ f \in \mathcal{F}} (P_n-P)f,
\end{align*}
where 
\[\mathcal{F}:= \{f :\mathbb{R}^d\to \mathbb{R}| \ f(x)=g(\theta^T x), g \in Lip_{1,0}(\mathbb{R}), \theta \in S^{d-1} \}.\]
\end{proof}
Before bounding the bracket number of $\mathcal{F}$, we need the following lemmas: 

\begin{lemma}
	For any positive constant $M$, we have 
\[\mathcal{N}_{[ \ ]}(\epsilon, Lip_{1,0}([-M, M]), \| \cdot \|_{\infty}) \leq e^{ \frac{4M}{\epsilon}\log2 }.\]
\end{lemma}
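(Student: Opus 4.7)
The plan is to construct an explicit $\epsilon$-bracketing cover of $Lip_{1,0}([-M,M])$ by piecewise-linear approximants with slopes restricted to $\{-1,+1\}$, and then bracket each approximant with a $\delta$-wide Lipschitz envelope. The payoff of this choice is that the branching factor in the enumeration is exactly $2$, which is what produces the $\log 2$ constant in the stated bound.

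First, I would place a grid $x_k = k\delta$ for $k = -N, \dots, N$ on $[-M,M]$ with $\delta = \epsilon/2$ and $N \le 2M/\epsilon$, giving at most $2N \le 4M/\epsilon$ intervals. For each slope sequence $\sigma \in \{-1,+1\}^{2N}$, let $g_\sigma$ denote the piecewise linear function with $g_\sigma(0)=0$ and slope $\sigma_k$ on $[x_{k-1}, x_k]$; this produces at most $2^{2N}$ candidate approximants. The key approximation step is that for every $f \in Lip_{1,0}([-M,M])$ some $\sigma$ satisfies $|f(x_k) - g_\sigma(x_k)| \le \delta$ at every grid point, which I would establish greedily: anchor $g_\sigma(0) = 0 = f(0)$ and, moving outward from $0$, observe that the error $e_k := f(x_k) - g_\sigma(x_k)$ evolves as $e_k = e_{k-1} + (f(x_k) - f(x_{k-1})) - \sigma_k \delta$. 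Choosing $\sigma_k$ to be the sign of $e_{k-1} + (f(x_k)-f(x_{k-1}))$ and using $|f(x_k) - f(x_{k-1})| \le \delta$ keeps $|e_k| \le \delta$ inductively.

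Next, for each admissible $\sigma$ I would define the Lipschitz envelopes
\[
\ell_\sigma(x) = \max_k \bigl(g_\sigma(x_k) - \delta - |x - x_k|\bigr), \qquad u_\sigma(x) = \min_k \bigl(g_\sigma(x_k) + \delta + |x - x_k|\bigr),
\]
i.e. the largest and smallest $1$-Lipschitz functions consistent with the constraints $f(x_k) \in [g_\sigma(x_k)-\delta,\,g_\sigma(x_k)+\delta]$. The $1$-Lipschitz property of $f$ together with the grid bound $|f(x_k)-g_\sigma(x_k)|\le\delta$ then forces $\ell_\sigma \le f \le u_\sigma$ on $[-M,M]$. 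A direct case analysis on each subinterval $[x_{k-1}, x_k]$, exploiting $g_\sigma(x_k)-g_\sigma(x_{k-1}) = \pm\delta$, shows that the two binding constraints always come from the endpoints $x_{k-1}$ and $x_k$ and that $u_\sigma(x)-\ell_\sigma(x) \le 2\delta = \epsilon$ uniformly. Counting approximants yields at most $2^{2N} \le 2^{4M/\epsilon} = e^{(4M/\epsilon)\log 2}$ brackets, matching the claim.

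The main obstacle is holding the branching factor at $2$ rather than the $3$ that the naive rounding $\hat f(x_k) = \lfloor f(x_k)/\delta\rfloor\delta$ would force, since the Lipschitz constraint then permits three increments $\{-\delta,0,+\delta\}$ per step. Restricting the approximants to slopes in $\{\pm 1\}$ halves the count but allows the grid-level error to grow to $\delta$ instead of $\delta/2$; the reason the resulting bracket nevertheless has width exactly $2\delta$ (rather than $3\delta$ or $4\delta$) is the compatibility between the $\pm 1$ slopes of $g_\sigma$ and the $\pm 1$ slopes of the Lipschitz envelope, which cooperate rather than compound inside each subinterval. Verifying this cancellation pointwise via the max/min formulas above is the technical crux.
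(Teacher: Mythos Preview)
Your proposal is correct and takes essentially the same approach as the paper. Both arguments partition $[-M,M]$ into subintervals of length $\epsilon/2$ and build brackets whose upper and lower functions are piecewise linear with slopes in $\{-1,+1\}$, yielding the branching factor $2$; the paper presents this as an iterative case analysis (showing that on each new subinterval the previous bracket splits into at most two), while you parametrize directly by the slope sequence $\sigma$, track the grid error greedily, and package the bracket as the explicit Lipschitz envelope $[\ell_\sigma,u_\sigma]$. Your computation in fact gives $u_\sigma=g_\sigma+\delta$ and $\ell_\sigma=g_\sigma-\delta$ on each subinterval, which are exactly the paper's brackets (the paper gains one extra factor of $2$ by using the single wedge $[-x,x]$ on the first subinterval, but this is absorbed in the final bound).
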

\begin{lemma} 
$\mathcal{N}(\epsilon, S^{d-1}, \|\cdot\|) \leq  (1+\frac{4}{\epsilon})^d$.
\end{lemma}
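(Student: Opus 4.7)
The plan is to apply the classical packing--covering duality together with a volume comparison in $\mathbb{R}^d$. I would first let $\{\theta_1,\ldots,\theta_N\} \subset S^{d-1}$ be a maximal $\epsilon$-separated subset, meaning $\|\theta_i-\theta_j\| \geq \epsilon$ for all $i\neq j$ and no further point of $S^{d-1}$ can be appended while preserving this separation. Maximality forces every $\theta \in S^{d-1}$ to lie within Euclidean distance $\epsilon$ of some $\theta_i$ (otherwise $\theta$ itself could be added to the set), so the packing is automatically an $\epsilon$-cover. This gives $\mathcal{N}(\epsilon, S^{d-1}, \|\cdot\|) \leq N$, and it suffices to bound $N$.

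For the bound on $N$, I would use the standard disjointness trick. Since the centers are $\epsilon$-separated, the open Euclidean balls $B(\theta_i, \epsilon/2) \subset \mathbb{R}^d$ are pairwise disjoint. On the other hand, because $\|\theta_i\|=1$, each ball satisfies $B(\theta_i, \epsilon/2) \subset B(0, 1+\epsilon/2)$. Writing $\omega_d$ for the volume of the Euclidean unit ball in $\mathbb{R}^d$ and taking Lebesgue volumes,
\[
N\,(\epsilon/2)^d\, \omega_d \;=\; \sum_{i=1}^{N}\mathrm{vol}\bigl(B(\theta_i,\epsilon/2)\bigr) \;\leq\; \mathrm{vol}\bigl(B(0,1+\epsilon/2)\bigr) \;=\; (1+\epsilon/2)^d\, \omega_d,
\]
which rearranges to $N \leq (1+2/\epsilon)^d$. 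Since $1+2/\epsilon \leq 1+4/\epsilon$, the claimed inequality $\mathcal{N}(\epsilon, S^{d-1}, \|\cdot\|) \leq (1+4/\epsilon)^d$ follows immediately.

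There is no substantive obstacle here: the argument is purely volumetric and uses only that $S^{d-1}$ sits inside the unit ball of $\mathbb{R}^d$. In fact the proof yields the slightly sharper $(1+2/\epsilon)^d$ bound, and the looser form stated in the lemma is almost certainly chosen for convenience when it is combined with the bracketing estimate of Lemma 2 in the proof of Theorem 2. The only point worth checking is that $\|\cdot\|$ here denotes the ambient Euclidean norm (as fixed in the Notations), which matches the metric used in the volume comparison.
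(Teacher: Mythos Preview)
Your proof is correct. It differs from the paper's argument in organization rather than in essence: the paper first quotes the standard volumetric bound $\mathcal{N}(\epsilon, B_2(1), \|\cdot\|) \leq (1+2/\epsilon)^d$ for the full closed unit ball, and then invokes the general fact that for $A \subset B$ one has $\mathcal{N}(\epsilon, A, d) \leq \mathcal{N}(\epsilon/2, B, d)$ (needed when covers are required to have centers in the set), which is precisely where the extra factor $2$---and hence the constant $4$---enters. You instead run the packing/volume comparison directly on $S^{d-1}$, which is cleaner and, as you noticed, already yields the sharper $(1+2/\epsilon)^d$. Both arguments are textbook; the paper's route makes transparent why the stated constant is $4$, while yours shows the lemma actually holds with $2$.
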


\begin{proof}[\textbf{Proof of Lemma 2:}]
First, we divide $[0, M]\times [-M, M]$ into a grid of squares with side length $\frac{\epsilon}{2}$. In $[0,\frac{\epsilon}{2} ]$, let $g(x) = -x, f(x)=x$, then for any $h \in Lip_{1,0}([0, M])$ and $x \in [0,\frac{\epsilon}{2} ]$, $g(x) \leq h(x) \leq f(x)$ holds, which means in $\in [0,\frac{\epsilon}{2} ]$, $ h \in [g, f]$.

In $[\frac{\epsilon}{2}, 2\frac{\epsilon}{2}]$, let $f_1(x) = \epsilon-x$ and $g_1(x) = -\epsilon+x$. For any $h \in Lip_{1,0}([0, M])$, we are going to prove that in $[\frac{\epsilon}{2}, 2\frac{\epsilon}{2}]$ there don't exist two points in $h$'s graph
such that one is above strictly $f_1$'s graph and the other is below strictly $g_1$'s graph. If not, there exist $x_0$ and $x_1$ such that $h(x_0) > f_1(x_0)$ and $h(x_1) < g_1(x_1)$. Without loss of generality, we can assume that $x_0 < x_1$. The Lipschitz continuity of $h$ implies that $h(x_1)-h(x_0) \geq x_0-x_1$, thus we have
$$h(x_1) \geq h(x_0)+x_0-x_1 >f_1(x_0)+x_0-x_1 =\epsilon-x_1 =f_1(x_1) \geq g_1(x_1),$$
which contradicts the assumption $h(x_1) < g_1(x_1)$. 

So there are only three cases:
\begin{itemize}
\item There exist points in $h$'s graph that are above strictly $f_1$'s graph. 

By the consideration above, we can assert that $h(x) \geq g_1(x)$ for all $x \in [\frac{\epsilon}{2}, 2\frac{\epsilon}{2}]$. It implies that in $[\frac{\epsilon}{2}, 2\frac{\epsilon}{2}]$, $h \in [g_1, f_2]$, where $f_2(x) := x$. 

\item There exist points in $h$'s graph that are below strictly $g_1$'s graph.

Thus $h(x) \leq f_1(x)$. As a consequence, $h \in [g_2, f_1]$, where $g_2(x) :=-x$. 

\item There exist points in $h$'s graph that are below $f_1$'s graph and above $g_1$'s graph. 

Then $h$ satisfies that 
$h \in [g_1, f_1]$.
\end{itemize}

From the construction above, we can assert that if in $[0, \frac{\epsilon}{2}]$, $h \in [g, f]$, then 
in $[\frac{\epsilon}{2}, 2\frac{\epsilon}{2}]$, $h \in [-\frac{\epsilon}{2}+g(\frac{\epsilon}{2})+x, \ -\frac{\epsilon}{2}+f(\frac{\epsilon}{2})+x] $ or $h \in [\frac{\epsilon}{2}+g(\frac{\epsilon}{2})-x, \ \frac{\epsilon}{2}+f(\frac{\epsilon}{2})-x]$. Similarly, we can conclude that if in $[0, k\frac{\epsilon}{2}]$, $h \in [g, f]$, then in  $[k\frac{\epsilon}{2}, (k+1)\frac{\epsilon}{2}]$, $h \in  [-k \frac{\epsilon}{2}+g(k\frac{\epsilon}{2})+x,\ -k \frac{\epsilon}{2}+f(k\frac{\epsilon}{2})+x] $ or $h \in [k \frac{\epsilon}{2}+g(k\frac{\epsilon}{2})-x,\ k \frac{\epsilon}{2}+f(k\frac{\epsilon}{2})-x]$. The conclusion implies that if we have constructed the brackets in $[0, k\frac{\epsilon}{2}]$, then we can construct the brackets in
$[0, (k+1)\frac{\epsilon}{2}]$ which are only doubled.

Thus by above, we have
\[\mathcal{N}_{[ \ ]}(\epsilon, Lip_{1,0}([0, M]), \| \cdot \|_{\infty}) \leq 2^{\lceil  \frac{M}{\frac{\epsilon}{2}} \rceil -1} \leq 2^{\frac{2M}{\epsilon}} = e^{\frac{2M}{\epsilon}\log2}. \]
It's easy to see that 
\[\mathcal{N}_{[ \ ]}(\epsilon, Lip_{1,0}([-M, M]), \| \cdot \|_{\infty}) \leq {\mathcal{N}_{[ \ ]}(\epsilon, Lip_{1,0}([0, M]), \| \cdot \|_{\infty})}^2 \leq e^{\frac{4M}{\epsilon}\log2}. \]
\end{proof}

Lemma 2 implies that $\log \mathcal{N}_{[ \ ]}(\epsilon, Lip_{1,0}([0, M]), \| \cdot \|_{\infty}) = M O(\frac{1}{\epsilon})$. On the other hand, we can obtain a lower bound for the bracketing entropy from the construction above. 

\begin{corollary} 
$\log \mathcal{N}_{[ \ ]}(\epsilon, Lip_{1,0}([-M, M]), \| \cdot \|_{\infty}) = M \Theta(\frac{1}{\epsilon})$.
\end{corollary}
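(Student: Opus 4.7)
The upper bound $\log\mathcal{N}_{[\ ]}(\epsilon, Lip_{1,0}([-M,M]), \|\cdot\|_\infty)=O(M/\epsilon)$ is already contained in Lemma 2, so I only need to supply the matching lower bound $\log\mathcal{N}_{[\ ]}\geq cM/\epsilon$ for some absolute constant $c>0$. The plan is the standard packing-to-bracketing reduction: I will construct $2^{\lfloor M/\epsilon\rfloor}$ functions in $Lip_{1,0}([-M,M])$ whose pairwise sup-norm distances all exceed $\epsilon$. Since any two members of a bracket $[f,g]$ with $\|g-f\|_\infty\leq\epsilon$ are automatically within $\epsilon$ in sup-norm, each $\epsilon$-bracket can contain at most one of these functions, and this forces $\mathcal{N}_{[\ ]}(\epsilon, Lip_{1,0}([-M,M]), \|\cdot\|_\infty)\geq 2^{\lfloor M/\epsilon\rfloor}$, hence $\log\mathcal{N}_{[\ ]}\geq(\log 2)\lfloor M/\epsilon\rfloor=\Omega(M/\epsilon)$.

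The family I will use is the zigzag family naturally dual to the binary tree in the proof of Lemma 2. Set $N=\lfloor M/\epsilon\rfloor$, and for each $\sigma=(\sigma_0,\ldots,\sigma_{N-1})\in\{-1,+1\}^N$, let $h_\sigma:[-M,M]\to\mathbb{R}$ be the continuous piecewise-linear function that vanishes on $[-M,0]$, has slope $\sigma_k$ on $[k\epsilon,(k+1)\epsilon]$ for each $0\leq k<N$, and is extended as a constant on $[N\epsilon,M]$. Direct inspection gives $h_\sigma(0)=0$ and a slope in $\{0,\pm 1\}$ everywhere, so $h_\sigma\in Lip_{1,0}([-M,M])$.

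For the separation, fix $\sigma\neq\sigma'$ and let $k_0$ be the smallest index at which they differ. The two functions agree on $[-M,k_0\epsilon]$, and on $[k_0\epsilon,(k_0+1)\epsilon]$ their difference is linear, starting from $0$ with slope $\sigma_{k_0}-\sigma'_{k_0}=\pm 2$. Evaluating at $x=(k_0+1)\epsilon$ shows $|h_\sigma-h_{\sigma'}|$ attains the value $2\epsilon$ there, so $\|h_\sigma-h_{\sigma'}\|_\infty\geq 2\epsilon>\epsilon$. Combined with the packing-to-bracketing reduction above and with the upper bound from Lemma 2, this delivers the claimed $\Theta(M/\epsilon)$.

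There is no real obstacle to overcome here; the construction is essentially mechanical. The only small points requiring a moment of care are verifying that each $h_\sigma$ remains $1$-Lipschitz across the join at $x=0$ (immediate, since the left piece is identically $0$ and the right-hand slope is at most $1$ in absolute value) and that the integer-part error in $\lfloor M/\epsilon\rfloor$ is harmlessly absorbed into the $\Omega$ notation.
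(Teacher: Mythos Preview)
Your proof is correct and follows essentially the same packing-to-bracketing argument as the paper: both exhibit a zigzag family of $1$-Lipschitz functions vanishing at $0$ whose pairwise sup-norm separation forces exponentially many brackets. The only cosmetic difference is that the paper reuses the upper functions from the Lemma~2 construction (intervals of length $\epsilon/2$, separation $\geq\epsilon$, then bounds the $\epsilon/2$-bracketing number), whereas you build the zigzags directly on intervals of length $\epsilon$ with separation $2\epsilon>\epsilon$, which lets you bound the $\epsilon$-bracketing number without the halving step.
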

\begin{proof}
It's sufficient to prove the lower bound. In the construction above, we denote the larger function in the $i$-th bracket by $f_i$. Thus we have that $\|f_i-f_j\|_{\infty} = \epsilon, (i\neq j)$ if $M=k \frac{\epsilon}{2}, k \in \mathbb{N}$, which implies
\begin{align*}
\log \mathcal{N}_{[ \ ]}(\frac{\epsilon}{2}, Lip_{1,0}([0, M]) &\geq
\log \mathcal{N}_{[ \ ]}(\frac{\epsilon}{2}, Lip_{1,0}([0, \lfloor \frac{2M}{\epsilon} \rfloor\frac{\epsilon}{2} ]) \\
&\geq \log 2^{\lfloor \frac{2M}{\epsilon} \rfloor -1} \\
&= (\lfloor \frac{2M}{\epsilon} \rfloor-1) \log2 \\
&= M \Omega(\frac{1}{\epsilon}).
\end{align*}
So we have $\log \mathcal{N}_{[ \ ]}(\epsilon, Lip_{1,0}([0, M]), \| \cdot \|_{\infty}) = M \Theta(\frac{1}{\epsilon})$ and obviously
\[\log \mathcal{N}_{[ \ ]}(\epsilon, Lip_{1,0}([-M, M]), \| \cdot \|_{\infty}) = M \Theta(\frac{1}{\epsilon}).\]
\end{proof}

\begin{proof}[\textbf{Proof of Lemma 3:}] 
Notice that for a metric space $(\mathcal{X}, d)$, if $A \subset B \subset \mathcal{X}$, then $\mathcal{N}(\epsilon, A, d) \leq \mathcal{N}(\frac{\epsilon}{2}, B, d)$. Since for the unit sphere $B_2(1)$ in $d$-dimensional Euclidean space, we have $\mathcal{N}(\epsilon, B_2(1), \| \cdot \|) \leq (1+\frac{2}{\epsilon})^d $, thus  
\[\mathcal{N}(\epsilon, S^{d-1}, \|\cdot\|) \leq \mathcal{N}(\frac{\epsilon}{2}, B_2(1), \|\cdot\|) \leq (1+\frac{4}{\epsilon})^d.\]
\end{proof}

\subsection{Proof of Theorem 2}
\begin{proof}
We write $\mathcal{F}_1$ for the set consisting of the $\epsilon$-brackets of $Lip_{1,0}([-M, M])$ constructed in the proof of Lemma 3, where $M$ is a positive constant to be determined. Before constructing brackets for $\mathcal{F}$, we need to extend the functions in brackets of $\mathcal{F}_1$ first. For bracket $[g, f] \in \mathcal{F}_1$, when $x >M$, let $f(x)=-M+f(M)+x, g(x)=M+g(M)-x$ and when $x<-M$, let $f(x)=-M+f(-M)-x,g(x)=M+g(-M)+x$. Next, we construct the brackets of $\mathcal{F}$. Recall that 
\[\mathcal{F}= \{f :\mathbb{R}^d\to \mathbb{R} \ | \ f(x)=g(\theta^T x), g \in Lip_{1,0}(\mathbb{R}), \theta \in S^{d-1} \}.\]
Let $\mathcal{G}$ be a minimal $\epsilon$-cover of $S^{d-1}$ in the sense of Euclidean norm, that is $|\mathcal{G}| = \mathcal{N}(\epsilon, S^{d-1}, \| \cdot \|)$. Let $\mathcal{H} = \{ [ g(\theta^T x) -\epsilon\|  x \|, f(\theta^T x) + \epsilon \|  x \| ] : [g, f] \in \mathcal{F}_1, \ \theta \in \mathcal{G} \}$. In the next, we are going to prove that $\mathcal{H}$ is a $C\epsilon$-bracket cover of $\mathcal{F}$, where $C$ is a constant depending only on the moment of $P$.

Given any $\hat{h} \in \mathcal{F}$, assume that $\hat{h}(\cdot) = h(\theta^T \cdot)$, where $h\in Lip_{1,0}(\mathbb{R}), \ \theta \in S^{d-1}$. Thus by construction of $\mathcal{F}_1$, there exist $[g, f] \in \mathcal{F}_1$ and $\theta_1 \in \mathcal{G}$ such that $g(y) \leq h(y) \leq f(y), \ \forall y \in \mathbb{R}$ and $ \| \theta-\theta_1 \|_2 \leq \epsilon$. We just need to restrict $h$ to $[-M, M]$ and the existence of the bracket follows from the construction and the extension. Then we have 
\begin{align*}
	\hat{h}(x) &= h(\theta^T x) \\
	&= h(\theta^T x)-h(\theta_1^T x) + h(\theta_1^T x) \\
	& \leq |\theta^T x-\theta_1^T x| + h(\theta_1^T x) \\
	& \leq \| \theta-\theta_1\| \|x\|+h(\theta_1^T x) \\
	&\leq \epsilon \|x\|+ f(\theta_1^T x) .
\end{align*} 
Similarly, $\hat{h}(x) \geq -\epsilon \|x\|+g(\theta_1^T x)$.

Next, we start to bound the $L_2(P)$-size of the brackets in $\mathcal{H}$. Define  
\[ c:= \|(f(\theta^T x)+\epsilon\|x\|) - (g(\theta^T x)-\epsilon\|x\|) \|_{L^2(P)}.\] 
Then 
\begin{align*}
	c^2 &= \int \left(f(\theta^T x)- g(\theta^T x)+2 \epsilon\|x\| \right)^2 dP(x) \\
	& \leq 2 \int \left(f(\theta^T x)- g(\theta^T x) \right)^2 dP(x) + 8\epsilon^2\int \|x\|^2 dP(x) .
\end{align*}
The inequality follows by $(a+b)^2 \leq 2(a^2+b^2)$. 

Denote the first term by $I:= \int \left(f(\theta^T x)- g(\theta^T x) \right)^2 dP(x)$. 

For estimating $I$, we divide $I$ into two terms.
\begin{align*}
	I = \int_{ \{ |\theta^T x| > M \} } \left(f(\theta^T x)- g(\theta^T x) \right)^2 dP(x) + \int_{ \{ |\theta^T x| \leq M \} } \left(f(\theta^T x)- g(\theta^T x) \right)^2 dP(x) .
\end{align*}
We write $I_1$ for the first term and $I_2$ for the last term.

From the construction of $f$ and $g$, we have
\[ I_2 \leq P(|\theta^T x| \leq M) \|f-g\|_{\infty}^2 |_{[-M,M]} \leq \epsilon^2.\]

For $I_1$, we have
\begin{align*}
	I_1 &= \int_{ \{ |\theta^T x| > M \} } \left(f(\theta^T x)- g(\theta^T x) \right)^2 dP(x) \\
	& \leq \int_{ \{ |\theta^T x| > M \} } 4\|x\|^2 dP(x) \\
	& \leq \int_{ \{ \|x\| > M \} } 4\|x\|^2 dP(x).
\end{align*}
Since when $y > M$, $-y \leq g(y) \leq f(y) \leq y$ , when $y < -M$, $y \leq g(y) \leq f(y) \leq -y$, then when $|y| > M$, $|f(y)-g(y)| \leq 2|y|$. Thus when $|\theta^T x| > M$, we have
$|f(\theta^T x)- g(\theta^T x) | \leq 2 | \theta^T x| \leq 2\|x\|$, the first inequality holds. The second inequality follows by $\{ x:|\theta^T x| > M \} \subset \{ x: \|x\| > M\}$. 

If the ($2+\delta$)-th moment of $P$ is finite, we have 
\begin{align*}
	I_1 &\leq 4\int_{ \{ \|x\| > M \} } \|x\|^2 dP(x) \\
	& \leq 4 \int_{ \{ \|x\| > M \} } \|x\|^{2+\delta} \frac{1}{M^\delta} dP(x) \\
	& \leq 4 \frac{1}{M^\delta}  \int \|x\|^{2+\delta} dP(x) .
\end{align*}

Let $m_p = \int \|x\|^p dP(x)$, then
\[ I = I_1 + I_2 \leq  4 \frac{1}{M^\delta} m_{2+\delta}+\epsilon^2.\]

Then for $c$, we have
\[ c^2 \leq 2I + 8 \epsilon^2 m_2 \leq 2\epsilon^2 + 8\frac{1}{M^\delta} m_{2+\delta} + 8 \epsilon^2 m_2 .\]

Let $M = (\frac{1}{\epsilon})^{\frac{2}{\delta}}$, i.e. $M^{\delta} = \frac{1}{\epsilon^2}$. Now, we have
\[c^2 \leq 2\epsilon^2 +8 \epsilon^2 m_2+8 \epsilon^2 m_{2+\delta}  = C^2 \epsilon^2,\]
where $C:= \sqrt{2+8m_2+8m_{2+\delta}}$.

With these preparations above, we can estimate $\mathcal{N}_{ [ \ ]} (C\epsilon, \mathcal{F}, L_2(P))$. 
\begin{align*}
	\mathcal{N}_{ [ \ ]} (C\epsilon, \mathcal{F}, L_2(P)) & \leq e^{4\frac{M}{\epsilon}\log2} (1+\frac{4}{\epsilon})^d \\
	& =  e^{ 4\log2({\frac{1}{\epsilon} })^{1+\frac{2}{\delta}} }(1+\frac{4}{\epsilon})^d .
\end{align*}
Therefore
\[	\mathcal{N}_{ [ \ ]} (\epsilon, \mathcal{F}, L_2(P)) \leq e^{ 4\log2({\frac{C}{\epsilon} })^{1+\frac{2}{\delta}} }(1+\frac{4C}{\epsilon})^d.\]

Then for the bracketing entropy integral of $\mathcal{F}$, we have
\begin{align*}
	J_{[ \ ]}(1, \mathcal{F}, L_2(P)) &= \int_{0}^{1} \sqrt{ \log \mathcal{N}_{[ \ ]}(\epsilon, \mathcal{F}  , L_2(P)) }d\epsilon \\
	&\leq \int_{0}^{1} \sqrt{ 4\log2 ({\frac{C}{\epsilon} })^{1+\frac{2}{\delta}}  + d\log(1+\frac{4C}{\epsilon})   } d\epsilon\\
	& \leq \int_{0}^{1}  \left(  2\sqrt{\log2}({\frac{C}{\epsilon} })^{\frac{1}{2}+\frac{1}{\delta}} + \sqrt{d\log(1+\frac{4C}{\epsilon}) }   \right) d\epsilon .
\end{align*}
The second inequality follows from $\sqrt{a+b} \leq \sqrt{a}+\sqrt{b}$ ($a, b \geq 0$). Observe that when  $\frac{1}{2} + \frac{1}{\delta} < 1$, i.e. $\delta > 2$, $J_{[ \ ]}(1, \mathcal{F}, L_2(P)) < \infty$. Thus when the $(4+\delta)$-th ($\delta > 0$) moment of $P$ is finite, $\mathcal{F}$ is $P$-Donsker. 
\end{proof}

\begin{remark}
Note that for $I_1$, we have a tighter estimation.
\begin{align*}
	I_1 &= \int_{ \{ |\theta^T x| > M \} } \left(f(\theta^T x)- g(\theta^T x) \right)^2 dP(x) \\
	&= \int_{ \{ |y| > M \} } (f(y)-g(y))^2 d{\theta^{*}_{\#}}P(y)\\
	& \leq \int_{ \{ |y| > M \} } 4|y|^2 d{\theta^{*}_{\#}}P(y).
\end{align*}
Then the condition becomes $\sup\limits_{\theta \in S^{d-1}} \mathbb{E}|\theta^T X|^{4+\delta} < \infty$, but it's equivalent to $\mathbb{E} \|X\|^{4+\delta} < \infty$.
\end{remark}

\subsection{Proof of Corollary 7}
\begin{proof} 

Let $\mathcal{F}_1$ be a $\epsilon$-bracket cover of $\mathcal{F}|_{[0,M]}$ (in the sense of $\| \cdot \|_{\infty}$) with $\log | \mathcal{F}_1 |=c  \frac{M^{\alpha}}{\epsilon^{\beta}}$. Then for any $\epsilon$-bracket $[g,f] \in \mathcal{F}_1$, let $f(x)=-LM+f(M)+Lx, \ g(x) = LM+g(M)-Lx$ when $x > M$ and $f(x)=-LM+f(-M)-Lx, \ g(x) = LM+g(-M)+Lx$ when $x < -M$. 

Similarly let $\mathcal{G}$ be a minimal $\epsilon$-cover of $S^{d-1}$ in the sense of Euclidean norm that is $|\mathcal{G}|=\mathcal{N}(\epsilon, S^{d-1}, \|\cdot \|)$. Then $\mathcal{H}:= \{[g(\theta^T x)-L\epsilon\|x||, f(\theta^T x)+L\epsilon\|x\|] : [g,f] \in \mathcal{F}_1, \ \theta \in \mathcal{G} \}$ is a $C\epsilon$-bracket cover of $\widehat{\mathcal{F}}$ where $C$ depends only on the moment of $P$ and $L$. 

From the same estimation as Theorem 2, let $M^{\delta} = \frac{1}{\epsilon^2}$ (provided the ($2+\delta$)-th moment of $P$ is finite), we have
\begin{align*}
& \|(f(\theta^T x)+L\epsilon\|x\|_2) - (g(\theta^T x)-L\epsilon\|x||_2) \|_{L^2(P)} \\
& \leq 2\epsilon^2 + 8L^2 m_2 \epsilon^2 + 8L^2m_{2+\delta} \epsilon^2 := C^2 \epsilon^2.
\end{align*} 

Thus we have 
\[ \log \mathcal{N}_{ [ \ ]}(\epsilon, \widehat{F}, L_2(P)) \leq c(\frac{C}{\epsilon})^{\frac{2\alpha}{\delta}+\beta} + d\log(1+\frac{4C}{\epsilon}).\]

Then the bracketing entropy integral of $\widehat{\mathcal{F}}$ is finite if $\frac{\alpha}{\delta}+\frac{\beta}{2} < 1$ i.e. $\delta > \frac{2\alpha}{2-\beta}$. In this condition, $\widehat{\mathcal{F}}$ is $P$-Donsker.
\end{proof}

\subsection{Proof of Theorem 3}
\begin{proof}
Note that
\begin{align*}
&\mathbb{E}[MSW_1(\mu_n, \mu)] \\
&= \mathbb{E}[\sup_{f\in\mathcal{F}} \int f(x)d\mu_n(x)-\int f(x)d\mu(x) ]\\
&=\mathbb{E}[\sup_{f\in\mathcal{F}} \int f(x)I_{\|x\|\leq M}d\mu_n(x)-\int f(x)I_{\|x\|\leq M}d\mu(x)+ \int f(x)I_{\|x\|> M}d\mu_n(x)-\int f(x)I_{\|x\|> M}d\mu(x)]\\
&\leq  \mathbb{E}[\sup_{f\in\mathcal{F}} \int f(x)I_{\|x\|\leq M}d\mu_n(x)-\int f(x)I_{\|x\|\leq M}d\mu(x)+ \sup_{f\in \mathcal{F}} \int f(x)I_{\|x\|> M}d\mu_n(x)-\int f(x)I_{\|x\|> M}d\mu(x)]\\
&= \mathbb{E}[\sup_{f\in\mathcal{F}_M} \int f(x)d\mu_n(x)-\int f(x)d\mu(x)]+ \mathbb{E}[\sup_{f\in \mathcal{F}_{M^c}} \int f(x)d\mu_n(x)-\int f(x)d\mu(x)],
\end{align*}
where $M$ is constant to be determined and $\mathcal{F}_M:=\{f(x)I_{\|x\|\leq M}:\ f \in \mathcal{F}\}$, 
$\mathcal{F}_{M^c}:=\{f(x)I_{\|x\|>M}: \ f \in \mathcal{F}\}$. And let $I_1$ be the first term above and $I_2$ be the last term.

By the standard symmetrization and Theorem 16 in \cite{67}, we have
\begin{equation}
I_1 \lesssim \mathbb{E}\left[\inf_{\delta>0} \left\{ \delta+\frac{1}{\sqrt{n}}\int_{\delta}^{\sigma_n} \sqrt{\log \mathcal{N}(\mathcal{F}_M, \|\cdot\|_{L^2(\mu_n)}, \epsilon)}d\epsilon\right\} \right], 
\end{equation}
where $\sigma_n=\sup_{f\in \mathcal{F}_M} \|f\|_{L^2(\mu_n)}$.

Recall that 
\[
\mathcal{F}:= \{f :\mathbb{R}^d\to \mathbb{R} \ | \ f(x)=g(\theta^T x), g \in Lip_{1,0}(\mathbb{R}), \theta \in S^{d-1} \}.\]

Notice that for any $g_1, g_2 \in Lip_{1,0}([-M, M]) ,\theta_1,\theta_2 \in S^{d-1}$ and $\|x\|\leq M$, we have	
\begin{align*}
|g_1(\theta_1^T x)-g_2(\theta_2^T x)| &\leq |g_1(\theta_1^T x)-g_2(\theta_1^T x)|+|g_2(\theta_1^T x)-g_2(\theta_2^T x) |\\
&\leq \|g_1-g_2\|_{\infty}+\|\theta_1-\theta_2\|M.
\end{align*}

Therefore
\begin{align*}
\mathcal{N}(\mathcal{F}_M, \|\cdot\|_{\infty}, \epsilon) &\leq 
\mathcal{N}(Lip_{1,0}([-M, M]), \|\cdot\|_{\infty}, \frac{\epsilon}{2}) \cdot \mathcal{N}(S^{d-1}, \|\cdot\|_2, \frac{\epsilon}{2M}) \\
&\leq e^{c\frac{M}{\epsilon}}(1+c\frac{M}{\epsilon})^d,	
\end{align*}	
where $c$ is a universal constant. 

Then by (4) and the fact that $\|f\|_{L^2(\mu_n)}\leq \|f\|_{\infty}$, we obtain that
\begin{align*}
I_1 &\lesssim \mathbb{E}\left[\inf_{\delta>0} \left\{ \delta+\frac{1}{\sqrt{n}}\int_{\delta}^{\sigma_n} \sqrt{\log \mathcal{N}(\mathcal{F}_M, \|\cdot\|_{\infty}, \epsilon)}d\epsilon\right\} \right] \\
&\lesssim \mathbb{E}\left[ \inf_{\delta>0} \left\{ \delta+\frac{1}{\sqrt{n}}\int_{\delta}^{\sigma_n}\sqrt{\frac{M}{\epsilon}}d\epsilon \right\} \right]\\
&=\mathbb{E}\left[ \inf_{\delta>0} \left\{ \delta+\frac{1}{\sqrt{n}}2\sqrt{M}(\sqrt{\sigma_n}-\sqrt{\delta}) \right\} \right].
\end{align*}
By choosing $\delta=\frac{\sigma_n}{\sqrt{n}}$, we have
\begin{align*}
I_1 &\lesssim 
\frac{1}{\sqrt{n}}\mathbb{E}[\sigma_n]+\frac{1}{\sqrt{n}}\sqrt{M}\mathbb{E}[\sqrt{\sigma_n}].
\end{align*}

For $\sigma_n$, we have
\begin{align*}
\mathbb{E}[\sigma_n] &=\mathbb{E}\left[\sup_{f\in \mathcal{F}_M} \sqrt{\frac{1}{n}\sum\limits_{i=1}^n f(X_i)^2} \right]\\
&\leq \mathbb{E}\left[ \sqrt{ \frac{1}{n}\sum\limits_{i=1}^n \|X_i\|^2 I_{\|X_i\|\leq M}} \right] \\
&\leq \sqrt{\mathbb{E}[\frac{1}{n}\sum\limits_{i=1}^n \|X_i\|^2 I_{\|X_i\|\leq M}]}\\
&\leq \sqrt{\mathbb{E}[\|X\|^2]},
\end{align*}
where the second inequality follows by the fact that $t(x)=x^{\frac{1}{2}}$ is concave and Jensen inequality. Similarly, $\mathbb{E}[\sqrt{\sigma_n}] \leq \sqrt[4]{\mathbb{E}\|X\|^2}$. 

Therefore under conditions (2), we have $I_1 \lesssim \sqrt{\frac{M}{n}}$ and
 
\[I_2 \leq 2\mathbb{E}[\|X\|I_{\|X\|>M}] \leq 2\frac{\mathbb{E}\|X\|^{1+\delta}}{M^{\delta}} .\]
Then 
\begin{align*}
\mathbb{E}[MSW_1(\mu_n, \mu)] &\leq I_1+I_2 \\
&\lesssim \sqrt{\frac{M}{n}}+\frac{\mathbb{E}\|X\|^{1+\delta}}{M^{\delta}}\\
&\lesssim n^{-\frac{\delta}{1+2\delta}}=n^{-\frac{p-1}{2p-1}},
\end{align*}
where the last inequality follows by choosing $M=n^{\frac{1}{1+2\delta}}$.

Under condition (3), the result can be derived straightly from Theorem 3.5.13 in \cite{70} and applying Markov inequality for the second term in the right of (3.203).

Under condition (1), $\mathbb{E}[\|X\|^2]<\infty$ may not hold, so the method above doesn't work. But we can utilize Dudley's integral inequality directly (Corollary 5.25 in \cite{68}). 

By the standard symmetrization argument, we have 
\[I_1 \leq 2\mathbb{E}_{X} \mathbb{E}_{\epsilon} \sup_{f\in\mathcal{F}_{M}}\frac{1}{n}\sum\limits_{i=1}^{n} \epsilon_i f(X_i).  \] 
With $X_1,\cdots, X_n$ fixed, we define stochastic process indexed by $\mathcal{F}_M$ as 
\[X_f= \frac{1}{n}\sum\limits_{i=1}^{n} \epsilon_i f(X_i).\]
To invoke Dudley's integral inequality, we  need to verify that the random process $(X_f)_{f \in \mathcal{F}_M}$ is sub-Gaussian. By Hoeffding's inequality, we have 
\begin{align*}
\mathbb{E}_{\epsilon}[e^{\lambda(X_f-X_g)}] &= \prod_{i=1}^n \mathbb{E}_{\epsilon}[e^{\frac{\lambda \epsilon_i(f(X_i)-g(X_i))}{n}}] \\
&\leq \prod_{i=1}^n e^{\frac{\lambda^2(f(X_i)-g(X_i))^2}{2n^2}} \\
&\leq e^{\frac{\lambda^2 \|f-g\|_{\infty}^2}{2n}}.
\end{align*}
Therefore, Dudley's integral inequality yields that
\begin{align*}
I_1 &\lesssim \mathbb{E}_{X}	\mathbb{E}_{\epsilon} \sup_{f\in\mathcal{F}_{M}}\frac{1}{n}\sum\limits_{i=1}^{n} \epsilon_i f(X_i) \\
&\lesssim \frac{1}{\sqrt{n}} \int_{0}^{\infty}  \sqrt{\log\mathcal{N}(\mathcal{F}_M, \|\cdot\|_{\infty}, \epsilon)d\epsilon } \\
&\lesssim \frac{1}{\sqrt{n}} \int_{0}^{M} \sqrt{\frac{M}{\epsilon}}d\epsilon \\
&\lesssim \frac{M}{\sqrt{n}}.
\end{align*}	
Therefore
\begin{align*}
\mathbb{E}[MSW_1(\mu_n, \mu)] &\leq I_1+I_2 \\
&\lesssim \frac{M}{\sqrt{n}}+ \frac{1}{M^{\delta}}\mathbb{E}[\|X\|^{1+\delta}]\\
&\lesssim n^{-\frac{\delta}{2(1+\delta)}} =n^{-\frac{p-1}{2p}},
\end{align*}
where the last inequality follows by choosing $M=n^{-\frac{1}{2(2+\delta)}}$.
\end{proof}

\subsection{Proof of Proposition 5}
\begin{proof}
By taking $MSW_1(\mu_n, \mu)$ as a supremum of an empirical process, the concentration inequalities follows from a direct application of Lemma 1 and Theorem 4 in \cite{26}. For brevity, We omit the details.

Assume that $\mu$ is induced by a $\sigma^2$-Subgaussian random vector $X$ with value in $\mathbb{R}^d$. Let $g(x_1,\cdots, x_n)=\sup_{f\in \mathcal{F}}|\sum\limits_{i=1}^n\frac{f(x_i)-\mathbb{E}f(X)}{n}|$. It's easy to verify that $g$ is $1/n$-Lipschitz continuous with respect to the distance $d((x_1,\cdots, x_n), (x_1^{'}, \cdots, x_n^{'})):=\sum\limits_{i=1}^n \|x_i-x_i^{'}\|$. Then 
\[MSW_1(\mu_n,\mu)=\sup_{f\in \mathcal{F}}|\sum\limits_{i=1}^n\frac{f(X_i)-\mathbb{E}f(X)}{n}|=g(X_1,\cdots, X_n).\]

To invoke transpotation cost inequality, we need to verify that $\mu$ satisfies the transpotation cost inequality. For $X\sim \mu$, let $Y$ be an independent copy of $X$, for any $f \in Lip_1(\mathbb{R}^d)$, we have
\begin{align*}
	\mathbb{E} e^{\lambda (f(X)-\mathbb{E}f(X))} &= \mathbb{E}_X e^{\lambda (f(X)-\mathbb{E}_X f(X))}\\
	&= \mathbb{E}_X e^{\lambda (\mathbb{E}_Y f(X)-\mathbb{E}_Y f(Y))} \\
	&= \mathbb{E}_X e^{\lambda\mathbb{E}_Y(f(X)-f(Y))} \\
	&\leq \mathbb{E}_{X,Y} e^{\lambda(f(X)-f(Y))} \\
	&= \frac{1}{2} \mathbb{E}_{X,Y} e^{\lambda(f(X)-f(Y))}+\frac{1}{2} \mathbb{E}_{X,Y} e^{-\lambda(f(X)-f(Y))} \\
	&\leq \frac{1}{2} \mathbb{E}_{X,Y} e^{\lambda \|X-Y\|}+\frac{1}{2} \mathbb{E}_{X,Y} e^{-\lambda \|X-Y\|} \\
	&= \mathbb{E}_{X,Y,\epsilon} e^{\lambda \epsilon \|X-Y\|}.
\end{align*}	
By the fact that $X-Y$ is $2\sigma^2$-Subgaussian and Proposition 10 in \cite{66}, we can infer that $\epsilon \|X-Y\|$ is $16d\sigma^2$-Subgaussian, which implys that for any $f \in Lip_1(\mathbb{R}^d)$, $f(X)$ is $16d\sigma^2$-Sungaussian. Then from Bobkov Theorem (Theorem 4.8 in \cite{68}), the following inequality holds.
\[W_1(\mu, \nu)\leq \sqrt{2(16d\sigma^2)D(\nu\|\mu)}, \ for \ \forall \ \nu \in \mathcal{P}(\mathbb{R}^d).\]
By tensorization, we have
\[W_1(\mu^{\otimes n}, \nu) \leq \sqrt{2(16nd\sigma^2)D(\nu\|\mu)}, \ for \ \forall \ \nu \in \mathcal{P}(\mathbb{R}^{nd}).\]	
Since $g$ is $1/n$-Lipschitz continuous with respect to $d$, by Bobkov Theorem we have 
\[P(|MSW_1(\mu_n, \mu)-\mathbb{E}MSW_1(\mu_n, \mu)|\geq t) \leq 2e^{-\frac{nt^2}{32d\sigma^2}}.\]

And for $SW_1$, we have
\begin{align*}
	&\mathbb{E}e^{\lambda(SW_1(\mu,\mu_n)-\mathbb{E}SW_1(\mu,\mu_n))} \\
	&=\mathbb{E} e^{\lambda \int_{S^{d-1}}  (W_1(\theta_{\#}^{*}\mu,\theta_{\#}^{*}\mu_n )-\mathbb{E}W_1(\theta_{\#}^{*}\mu,\theta_{\#}^{*}\mu_n ) )d\sigma(\theta) } \\
	&\leq \mathbb{E}[\int_{S^{d-1}} e^{\lambda (W_1(\theta_{\#}^{*}\mu,\theta_{\#}^{*}\mu_n )-\mathbb{E}W_1(\theta_{\#}^{*}\mu,\theta_{\#}^{*}\mu_n)} d\sigma(\theta)] \\
	&=\int_{S^{d-1}} \mathbb{E}[e^{\lambda (W_1(\theta_{\#}^{*}\mu,\theta_{\#}^{*}\mu_n )-\mathbb{E}W_1(\theta_{\#}^{*}\mu,\theta_{\#}^{*}\mu_n))}]d\sigma(\theta)\\
\end{align*}
Similarily, let $h(y_1, \cdots, y_n)=\sup\limits_{f\in Lip_1(\mathbb{R})} \sum\limits_{i=1}^n |\frac{f(y_i)-\mathbb{E}f(\theta^T X_i)}{n}|$, then $h$ is $1/n$-Lipschitz continuous with respect to the distance	 $d((y_1,\cdots, y_n), (y_1^{'}, \cdots, y_n^{'})):=\sum\limits_{i=1}^n |y_i-y_i^{'}|$. Then we have $W_1(\theta_{\#}^{*}\mu,\theta_{\#}^{*}\mu_n)=h(\theta^T X_1, \cdots, \theta^T X_n).$

By the fact that univariate random variable $\theta^T(X-Y)$ is symmetric, we can infer that $\epsilon|\theta^T(X-Y) |$ has the common distribution with $\epsilon(\theta^T(X-Y) )$, which implys that $\epsilon|\theta^T(X-Y) |$ is $2\sigma^2$-Subgaussian. By tensorization, we have
\[W_1(({\theta_{\#}^{*}\mu})^{\otimes n}, v)\leq \sqrt{2(2n\sigma^2)D(\nu\|({\theta_{\#}^{*}\mu})^{\otimes n} )}, \ for \ \forall \nu \in \mathcal{P}(\mathbb{R}^n).\]

Then 	
\[\mathbb{E}e^{\lambda(SW_1(\mu,\mu_n)-\mathbb{E}SW_1(\mu,\mu_n))}
\leq \int_{S^{d-1}} e^{-\frac{\lambda^2}{2}\frac{2\sigma^2}{n} }d\sigma(\theta) =e^{-\frac{\lambda^2}{2}\frac{2\sigma^2}{n} }.\]
Thus, 
\[P(|SW_1(\mu_n, \mu) -\mathbb{E}SW_1(\mu_n,\mu)|\geq t) \leq 2e^{-\frac{nt^2}{4\sigma^2}}.\]

\end{proof}

\subsection{Proof of Corollary 10}
\begin{proof}
(i) Theorem 2 implies that $\mathcal{F}$ is a Donsker class under both $P$ and $Q$, then the 
conclusion follows directly from Chapter 3.7 of \cite{15}. 

(ii) It follows by Theorem 3.7.7 in \cite{15}.
\end{proof}

\subsection{Proof of Proposition 6}
\begin{proof}
Under (1), the proof is almost same as Theorem 2.3 in \cite{69}. The only thing we need to prove is that for each $\epsilon>0$ and each $\delta>0$ we can choose some $r \in \mathbb{N}$ such that
\[\mathbb{E} \int_{S^{d-1}\times [-r,r]^c} |P_m^B-Q_n^B|dtd\sigma(\theta)<\epsilon \delta  .\]
It's easy to obtain a similar form as equation (8) in \cite{69}.
\begin{align*}
&\ \ \mathbb{E} \int_{S^{d-1}\times [-r,r]^c} |P_m^B-Q_n^B|dtd\sigma(\theta)\\
&\leq \left(\mathbb{E} \int_{S^{d-1}\times [-r,r]^c} |P_m^B-Q_n^B|^2dtd\sigma(\theta) \right)^{1/2} \\
&= \left( \int_{S^{d-1}\times [-r,r]^c} \mathbb{E}|P_m^B-Q_n^B|^2dtd\sigma(\theta) \right)^{1/2} \\
&\leq \sqrt{ \frac{m}{N} \frac{1}{m} \sum\limits_{i=1}^m \|X_i\|I_{ \{\|X_i\|>r\}} + \frac{n}{N} \frac{1}{n} \sum\limits_{k=1}^n \|Y_k\|I_{ \{\|Y_k\|>r\}}  },
\end{align*}
where the first inequality follows from Cauchy inequality and the second inequality follows from (8) in \cite{69}. Given almost every $X_1, \cdots, X_m, Y_1, \cdots, Y_n$, the result follows from the law of large numbers. Then by the same argument as Theorem 2.3 in \cite{69}, we conclude.  

Under (ii), it follows immediately from Corollary 8.
\end{proof}

\bibliographystyle{IEEEtran}
\bibliography{CLT.bib}

\end{document}